\def \no{\nonumber}
\def\p{\partial}
\def\f{\frac}
\def\na{\nabla}
\def\al{\alpha}
\def\t{\tilde}
\def\vp{\varphi}
\def\O{\Omega}
\def\th{\theta}
\def\g{\gamma}
\def\G{\Gamma}
\def\dl{\delta}
\def\p{\partial}
\def\f{\frac}
\def\k{\kappa}
\def\na{\nabla}
\def\al{\alpha}
\def\t{\tilde}
\def\o{\omega}
\def\O{\Omega}
\def\vp{\varphi}
\def\th{\theta}
\def\g{\gamma}
\def\G{\Gamma}
\def\dl{\delta}
\def\ds{\displaystyle}
\begin{document}
	\footskip=0pt
	\footnotesep=2pt
	\let\oldsection\section
	\renewcommand\section{\setcounter{equation}{0}\oldsection}
	\renewcommand\thesection{\arabic{section}}
	\renewcommand\theequation{\thesection.\arabic{equation}}
	\newtheorem{claim}{\noindent Claim}[section]
	\newtheorem{theorem}{\noindent Theorem}[section]
	\newtheorem{lemma}{\noindent Lemma}[section]
	\newtheorem{proposition}{\noindent Proposition}[section]
	\newtheorem{definition}{\noindent Definition}[section]
	\newtheorem{remark}{\noindent Remark}[section]
	\newtheorem{corollary}{\noindent Corollary}[section]
	\newtheorem{example}{\noindent Example}[section]
	
	\title{Global smooth solutions to 2D
		semilinear wave equations \\ with large data}
	\author{Bingbing Ding\thanks{School of Mathematical Sciences and Mathematical Institute, Nanjing Normal University, Nanjing 210023, China. Email: 13851929236@163.com, bbding@njnu.edu.cn}\and Shijie Dong\thanks{Southern University of Science and Technology,
			Shenzhen International Center for
			Mathematics, and Department of Mathematics, Shenzhen 518055, China. E-mail: dongsj@sustech.edu.cn, shijiedong1991@hotmail.com}
		\and Gang Xu\thanks{School of Mathematical Sciences and Mathematical Institute, Nanjing Normal University, Nanjing 210023, China. E-mail: gxumath@outlook.com, gxu@njnu.edu.cn}}
	\date{}
	\maketitle

	\begin{abstract}		
		We are interested in coupled semi-linear wave equations satisfying the null condition in two space dimensions, a basic model in nonlinear wave equations. Our aim is to establish global existence of smooth solutions to this system with large initial data of short pulse type. Major difficulties arise due to the largeness of initial data and the slow decay nature of 2D wave equations. To overcome the difficulties, by careful examination of the local solutions, we adapt various vector-field methods to different spacetime regions with several novel weighted energy estimates.

	\end{abstract}
	
	\tableofcontents

	\vskip 0.2 true cm {\bf Keywords:} 2D semilinear wave equation, short pulse initial data,
	null condition

	\vskip 0.2 true cm {\bf Mathematical Subject Classification:} 35L05, 35L72

	\vskip 0.4 true cm
	
	\section{Introduction}\label{in}
	\subsection{Model problem and main result}\label{2}
	The system of semi-linear wave equations we consider is of the form
	\begin{equation}\label{semi}
	\Box\phi^I=Q^I(\p\phi,\p\phi),\qquad I=1,\cdots, N,
	\end{equation}
	where $\phi=(\phi^1,\cdots,\phi^N)$ is a vector valued function,  $\Box=-\p_t^2+\triangle$, $t=x^0\in[1,\infty)$, $x=(x^1,x^2)$, and $\p=(\p_0,\p_1,\p_2)$. In addition, for any $I\in\{1, \cdots N\}$, $Q^I(\p\phi, \p\phi)$ is a quadratic form satisfying the null conditions, that is,
	\begin{equation}\label{Q}
	Q^I(\p\phi, \p\phi)=\sum_{\tiny\begin{array}{c}1\leq A,B\leq N\\0\leq\al,\beta\leq 2\end{array}}g_{AB}^{\al\beta,I}\p_\al\phi^A\p_\beta\phi^B,
	\end{equation}
	in which $g_{AB}^{\al\beta,I}$ are constants satisfying
	\begin{equation}\label{null}
	\sum_{0\leq\al,\beta\leq 2}g_{AB}^{\al\beta,I}\xi_\al\xi_\beta\equiv0,\ \text{on}\ \xi_0^2=\xi_1^2+\xi_2^2.
	\end{equation}
	
	In the present paper, we are concerned about global existence of solution to \eqref{semi}-\eqref{null} with large data of short pulse type, that is
	\begin{equation}\label{initial}
	\left\{
	\begin{aligned}
	&\phi(1,x)=\dl^{1-\kappa}\phi^\dl_0(\f{r-1}\dl,\o),\\
	&\p_t\phi(1,x)=\dl^{-\kappa}\phi^\dl_1(\f{r-1}\dl,\o),
	\end{aligned}
	\right.
	\end{equation}
	where
	$r=|x|=\sqrt{(x^1)^2+(x^2)^2}$, $\o=(\o_1,\o_2)=\f{x}{r}\in\mathbb S^1$, $\kappa\in(0,\kappa_0)$ with $\kappa_0={\f12}$ is a fixed constant, and $(\phi^\dl_0,\phi^\dl_1)(s,\o)$ are smooth functions defined
	in $\mathbb R\times \mathbb S^1$
	compactly supported in $(-1,0)\times \mathbb S^1$.
	
	\begin{remark}
		The initial data \eqref{initial} referred to as ``short pulse data" were introduced in the seminal work \cite{Christodoulou09} by Christodoulou. Though $\phi(1,x)$ is small due to the smallness of $\delta$, it becomes significantly large after taking derivatives. Furthermore, the more derivatives we take, the larger the data become, that is,
		\begin{equation}\label{pphi}
		\p_{x}^\al\phi(1,x)=O(\dl^{1-\kappa-|\al|}).
		\end{equation}

		
	\end{remark}
	To show global existence to the Cauchy problem \eqref{semi}-\eqref{initial}, we further assume
	\begin{equation}\label{inital}
	(\p_t+\p_r)\O^j\p^q\phi|_{t=1}= O(\delta^{1-\kappa-|q|}),
	\end{equation}
	in which $\O=x^1\p_2-x^2\p_1$ is the rotational derivative on $\mathbb S^1$.
	This assumption on the initial data is rather weak compared with some existing literature; see Remark \ref{rmk:largeness} for more details.
	
	\begin{remark}
		There are plenty of initial data satisfying \eqref{initial}-\eqref{inital}. For example, for arbitrary smooth functions $\psi_1(s,\o)$ and $\psi_2(s,\o)$
		compactly supported in $(-1,0)\times \mathbb S^1$, it is easy to check that
		\[
		\phi_0^\dl(s,\o)=\psi_1(s,\o)\quad\text{and} \quad\phi_1^\dl(s,\o)=-\p_s\psi_1(s,\o)+\dl\psi_2(s,\o)
		\]
		with $s = {r-1\over \delta}$ fulfill \eqref{pphi}-\eqref{inital}.
	\end{remark}

	We are now ready to state the main result in this paper.
	\begin{theorem}\label{main}
		Consider the system \eqref{semi} under null condition \eqref{null}, and let $0<\kappa< \kappa_0$. Then, there exists a $\delta_0 > 0$ such that for all $\delta \in (0, \delta_0)$ and all initial data obeying \eqref{inital}, the Cauchy problem \eqref{semi}-\eqref{initial} admits a global smooth solution $\phi$ which satisfies
		$$\phi\in C^\infty([1,+\infty)\times\mathbb R^2),\quad\qquad  |\p\phi|\le C\delta^{-\kappa}t^{-1/2},$$
		for all time $t\ge 1$, where $C>0$ is a uniform constant independent of $\dl$ and $\kappa$.
	\end{theorem}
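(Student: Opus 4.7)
The plan is a continuity/bootstrap argument built around the short-pulse geometry of the initial data. Introduce the retarded variable $u:=t-r$ and the outgoing cone $\mathcal{C}_0:=\{u=0\}$. By finite speed of propagation, $\phi$ vanishes on $\{u<0\}$, while the data are supported in $\{0<u<\delta\}\cap\{t=1\}$, so all subsequent analysis takes place in $\{u\ge 0\}$. I would split this region into the \emph{short-pulse slab} $\mathcal{K}_1:=\{0\le u\le \delta\}$, where the wave is concentrated and $\partial\phi$ is of size $\delta^{-\kappa}$, and the \emph{tail region} $\mathcal{K}_2:=\{u\ge \delta\}$, where the wave has been dispersed by 2D propagation and is correspondingly smaller. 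Using the null pair $L=\partial_t+\partial_r$, $\underline L=\partial_t-\partial_r$, and the angular derivative $r^{-1}\Omega$, the hypothesis \eqref{inital} encodes a polarization of the data: an $L$-derivative gains one full power of $\delta$ compared with an $\underline L$-derivative. Propagating this gain through the quadratic nonlinearity by virtue of the null condition \eqref{null} is the structural heart of the proof.

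In $\mathcal{K}_1$ I would perform a higher-order energy estimate in which $L$-commutators are weighted by a gain factor of $\delta$, of the schematic form
\begin{equation*}
\sum_{|a|+|b|\le N}\delta^{2|a|+2\kappa-1}\int_{\Sigma_t\cap\mathcal{K}_1}\bigl(|L^{a}Z^{b}\partial\phi|^{2}+|r^{-1}\Omega\,L^{a}Z^{b}\phi|^{2}\bigr)\,dx\le C,
\end{equation*}
where the commuting fields are taken in $Z\in\{\partial_\alpha,\Omega\}$ and $|a|$ counts the number of good $L$-commutators. The null condition guarantees that every quadratic product appearing in the commuted source equation carries a good factor ($L$-derivative, angular derivative, or a genuine null form), which supplies exactly the missing power of $\delta$ required to close the bootstrap. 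Sobolev embedding on the thin slab $\mathcal{K}_1$ then yields the pointwise bounds $|L\phi|+|r^{-1}\Omega\phi|\lesssim \delta^{1-\kappa}\langle t\rangle^{-1/2}$ and $|\underline L\phi|\lesssim \delta^{-\kappa}\langle t\rangle^{-1/2}$, matching the decay announced in the theorem.

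In $\mathcal{K}_2$ the appropriate framework is Alinhac's ghost-weight energy estimate, originally designed for small-data 2D null systems. With a bounded, increasing ghost weight $q(u)$ (for instance $q(u)=\arctan u$) and multiplier $e^{q(u)}\partial_t\phi$, the energy identity produces the spacetime bulk term
\begin{equation*}
\int_1^t\!\!\int_{\mathcal{K}_2}e^{q(u)}\,\frac{|L\phi|^{2}+|r^{-1}\Omega\phi|^{2}}{\langle u\rangle^{2}}\,dx\,ds,
\end{equation*}
which absorbs the borderline nonlinear contributions caused by the 2D slow decay rate $t^{-1/2}$. Here the full Klainerman vector-field algebra, including scaling $S=t\partial_t+x^{i}\partial_i$ and, with care, the Lorentz boosts $L_i=x^{i}\partial_t+t\partial_i$, can be used as commutators because $u\sim |x|$ is no longer small. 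The null condition rewrites every quadratic source as $L\phi\cdot \partial\phi$ or $r^{-1}\Omega\phi\cdot\partial\phi$ modulo admissible lower-order terms, each of which saves one factor of $\langle t\rangle^{-1/2}$ relative to a generic $\partial\phi\cdot\partial\phi$. The matching at the internal boundary $\{u=\delta\}$ of the $\mathcal{K}_1$-estimates to admissible data for the $\mathcal{K}_2$ analysis is then a routine commutator computation, since the $L$-weighted norms translate into weighted $\partial$-norms with at most a $\delta$-controlled loss.

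The main obstacle is the bookkeeping of the $\delta^{-\kappa}$ losses against $\delta$-gains. In the small-data regime a single weighted Sobolev or Hardy inequality is harmless, but here each such step threatens an additional $\delta^{-\kappa}$ factor that cannot be absorbed by simple smallness. The restriction $\kappa<\kappa_0=1/32$ is the quantitative statement that after all the commutator counts, Sobolev losses, and weight conversions across both regions, the total power of $\delta^{-\kappa}$ accumulated in the energy hierarchy remains bounded by a fixed universal constant. A secondary difficulty, absent in 3D, is the failure of the sharp Huygens principle: the wave develops a nontrivial tail in $\mathcal{K}_2$ whose decay is only $t^{-1/2}$, so the two-region analysis cannot be decoupled, and a careful choice of weighted norms that interpolates between the $\delta$-scaling in $\mathcal{K}_1$ and the $t$-scaling in $\mathcal{K}_2$ is precisely what makes the whole scheme close.
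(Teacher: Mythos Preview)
Your two-region split and the spirit of the slab analysis in $\mathcal{K}_1$ are aligned with the paper, though the paper's near-cone argument is considerably more structured: it uses multipliers $\underline u^{2\mu}L$ and $\underline L$ together with a correction current $J_3$ (needed because $\mu>1/2$ in 2D), and runs a two-pass scheme---first a preliminary estimate that controls the $\delta$-hierarchy but carries a slow growth $s^{2\varsigma}$, then a second estimate with a lower-weight multiplier $\underline u^{2\nu}L$ ($\nu<1/2$) that removes the growth. Your sketch does not distinguish these two passes, and simply weighting $L$-commutators by $\delta$ will not by itself yield the non-growing bound $|\underline L\phi|\lesssim\delta^{-\kappa}t^{-1/2}$.

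The substantive gap is in $\mathcal{K}_2$. You propose Alinhac's ghost weight on constant-$t$ slices; the paper explicitly rejects that foliation (``it seems hard to close the estimates there'') and instead uses the hyperboloidal method with both natural and \emph{conformal} energies on $\mathcal{H}_\tau=\{(t+1)^2-|x|^2=\tau^2\}$. The obstruction is that the interior data are not small at high order: the local analysis gives $|\p^\al\O^c\phi(t_0,\cdot)|\lesssim\delta^{2-|\al|-\kappa}$ for $|\al|\ge2$, so top-order energies are genuinely large in $\delta^{-1}$. Your ghost-weight bulk term controls $\int|T\phi|^2\langle u\rangle^{-2}$ but does not supply the extra pointwise decay of good derivatives needed to make the borderline $\int t^{-1}\,dt$ converge when the energy hierarchy is large; in the small-data regime one bootstraps a $t^{\varepsilon}$ loss, but here that loss couples with the $\delta$-ladder and you have given no mechanism to close high-to-low interactions. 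The paper closes this via a hyperboloidal Klainerman--Sobolev inequality with a tunable interpolation parameter $\lambda_k$ and a bootstrap $E_{k,l}(\tau)\le M_0^2\delta^{2a_k}$, $E^c_{k,l}(\tau)\le M_0^2\delta^{2a_k}\tau^{5/3}$ with an explicit decreasing sequence $a_k$; the constraint $\kappa<1/32$ arises from the arithmetic of this hierarchy, not from a single Sobolev loss. Finally, $\mathcal{K}_2$ is an initial \emph{boundary} value problem: the conformal energy picks up a flux $\int_{C_\delta}(K\vp+\vp)^2\,dS$ through $\{u=\delta\}$, and bounding it (the paper gets $\lesssim\delta^{2-2\kappa-k}\tau^{5/3}$) requires importing the near-cone flux $F_1$ from the $\mathcal{K}_1$ analysis---the regions are coupled through this boundary term, not merely through matched Cauchy data.
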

	
	\begin{remark}\label{rmk:largeness}
		The  ``short pulse data"  have been studied  in \cite{MPY, Wang} etc., but the class of data considered in the present paper  are different from theirs. Recall for instance in \cite{MPY}, the initial data $(\phi, \p_t\phi)|_{t=1}$ there should fulfill a similar restriction to  \eqref{inital}, and more specifically the authors assumed
		\begin{equation}\label{MPYinitial}
		(\p_t+\p_r)^k\O^j\p^q\phi|_{t=1}= O(\delta^{1/2-|q|}),\qquad \forall \,\, k\leq N
		\end{equation}
		for a large integer $N$. \eqref{MPYinitial} guarantees the existence of solutions in the region away from the outermost characteristic cone to be a small data 3D problem, thus the largeness of solutions only reflects near the outermost cone.  However, in our paper, the weaker condition \eqref{inital} makes the energies be large both near the outermost cone $\{t=r\}$ and inside it, which causes additional difficulties.
	\end{remark}
	
	\begin{remark}
		Despite system \eqref{semi} satisfies the null condition \eqref{null}, it does not seem easy to apply the classical energy method to deduce the global existence even if we treat small initial data since the decay rate of the solution in 2D is too slow. Thus, it is crucial to choose applicable multipliers to establish energies suitable to our problem. It is  more challenging to treat the 2D case than the 3D case.
	\end{remark}

	\subsection{Brief history and relevant results}
	
	The study of nonlinear wave equations has been an active research field for decades. One problem of fundamental importance is that whether a nonlinear wave system admits a global solution with certain assumptions on the initial data. In 3D,  John in \cite{John} showed that general quadratic nonlinearities would lead to finite time blow-up for wave equations even for small, smooth initial data. In 1986, thanks to the seminal works of Klainerman \cite{Klainerman86} and Christodoulou \cite{Christodoulou86}, it is known that small global solutions exist if the nonlinear terms satisfy null condition.
	
	Later on, in the breakthrough \cite{Alinhac2001} Alinhac proved that quasilinear wave equations under null condition in 2D admits global solutions for small, smooth and compactly supported initial data, where the celebrated idea of ghost weight energy estimates came up. For this 2D quasilinear null model, Hou-Yin \cite{Hou-Yin-1} removed the compactness assumption on the initial data and more recently Dong-LeFloch-Lei \cite{DLL} and Li \cite{Li23} independently established a uniform boundedness result for the energy. We also recall that global existence for the 2D semilinear wave equations (i.e., \eqref{semi}) with small data was proved by Katayama \cite{Katayama} where the author used ghost weight method; see also \cite{Wong}.
	
	For the one-dimensional case, we would like to mention the global existence results by Gu \cite{Gu} on wave maps and by Luli-Yang-Yu \cite{LYY18} on the system \eqref{semi}.
	
	Following those small data results, we now turn to the literature for nonlinear wave equations with large initial data. The short pulse data, introduced by Christodoulou, are an important class of large data. In 2009, Christodoulou introduced the idea of short pulse data in \cite{Christodoulou09} when studying the Einstein vacuum equations, and showed the formation of the trapped surface. Motivated by this milestone work, various studies on nonlinear wave equations with short pulse initial data are conducted, including both global existence and finite time blow-up results. In \cite{MPY},  Miao-Pei-Yu proved global existence for \eqref{semi} with short pulse data in 3D. In \cite{MY}, Miao-Yu showed formation of shocks for a class of quasilinear wave equations with short pulse data in 3D, and interestingly the quasilinear wave equations admit global solutions when the initial data are of small size. In \cite{Ding4}, Ding-Xin-Yin got global existence of solutions for general quasilinear wave equations with short pulse data in 4D case. For 3D case, when $\kappa$ is in different range, Ding-Lu-Yin proved global existence for a class of quasilinear wave equations with short pulse data in \cite{Ding3} while the shock formation was shown by Lu-Yin \cite{Lu1}. In \cite{Wang}, Wang-Wei obtained global existence for 2D relativistic membrane equations while Ding-Xin-Yin in \cite{Ding3} established global existence for 2D isentropic and irrotational Chaplygin gases. We also list the global existence result for 3D nonlinear wave equations with large data  not of short pulse form by Yang \cite{Yang} and Luk-Oh-Yang \cite{LOY}.

	\subsection{Major challenges}
	
	For small initial data, the most well-known challenge in studying \eqref{semi} is the slow decay nature of waves in 2D. We recall that free waves decay at speed $t^{-(d-1)/2}$ in time in $\mathbb{R}^{d+1}$ with $d\geq 1$. In 2D, the best we can expect for the system \eqref{semi} is that the solution decays at speed $t^{-1/2}$, which is a non-integrable quantity (actually $t^{-1/2}$ is even far from the boarline of non-integrability $t^{-1}$). Thus, it is already a non-trivial task to prove global existence for system \eqref{semi} with small, smooth initial data.
	To show Theorem \ref{main}, we additionally need to treat large initial data of short pulse form (with few smallness assumptions), which further bring severe difficulties.
	
	First, the solution to \eqref{semi}-\eqref{inital} exhibits different levels of largeness when hit with different weighted derivatives.  The largeness of the solution is reflected by the parameter $\delta$ with negative power (recall $\delta$ is small); say $\delta^{-1}, \delta^{-2}$, etc.   Short pulse initial data have long range effect, and solutions will stay large as time evolves. Unlike the small initial data case, we now need to carefully track the largeness of the solution hit with different weighted derivatives which is sensitive to our analysis.
	
	Besides, the solution $\phi$ exhibits different levels of largeness in different spacetime regions, which can be found when we analyze the properties on the local solution (see \eqref{local1-2}-\eqref{local2-2}). This does not allow us to treat the solution using a unified way. In addition to consider how to choose targeted methods to conquer the difficulties causing by largeness and slow decay in different regions, we also pay special attention to the interface of the regions. In fact, we will face an initial boundary value problem. The presence of the boundary with non-zero data will force us to analyze the terms on the boundary carefully (see the proof of Lemma \ref{energy}), which further increases the difficulties of showing global existence for \eqref{semi}.

	\subsection{Novel ideas and outline of the proof}\label{int:1.4}
	
	\paragraph{A glimpse of example}	
	
	\
	
	To figure out how the ``largeness" will develop in the equation, before starting our proof for general system of semi-linear equations in 2D, we take a glance at a scalar equation, which reads $\Box\bar\phi=|\p_t\bar\phi|^2-|\nabla_x\bar\phi|^2$
	with $x\in\mathbb R$. We perform the Nirenberg transformation\footnote{This transformation, however, cannot be applied to our problem, as our problem deals with coupled wave equations.} $\tilde\phi=1-e^{-\bar\phi}$, and then $\tilde\phi$ solves $\Box\tilde\phi=0$.
	Specifically, we first analyze the local $L^\infty$ properties of
	\begin{equation}\label{swequ}
	\left\{
	\begin{aligned}
	&\p_t^2\tilde\phi-\p_x^2\tilde\phi=0,\qquad (t,x)\in (1,\infty)\times\mathbb R,\\
	&\t\phi(1,x)=g(x)=\dl^{1-\kappa}\t\phi^\dl_0(\f{|x|-1}\dl),\qquad x\in\mathbb R,\\
	&\p_t\t\phi(1,x)=h(x)=\dl^{-\kappa}\t\phi^\dl_1(\f{|x|-1}\dl),\qquad x\in\mathbb R,
	\end{aligned}
	\right.
	\end{equation}
	where $\kappa$ is a constant as before, and $(\t\phi^\dl_0, \t\phi^\dl_1)(s)$ are smooth functions defined in $\mathbb R$ compactly supported in $(-1,0)$ with the same estimates as in \eqref{inital}. For simplicity, we only focus on the region $D_0=\{(t,x):1\leq t\leq1+2\dl,2-\dl-t\leq x\leq t\}$. In this region,
	\begin{equation*}
	\t\phi(t,x)=\f12\big(g(x+t-1)+g(x-t+1)\big)+\f12\int_{x-t+1}^{x+t-1}h(y)dy,
	\end{equation*}
	and this, together with \eqref{inital}, gives
	\begin{equation}\label{tphi}
	|\p^\al\t\phi(t,x)|+|(\p_t+\p_x)\p^\al\t\phi(t,x)|\lesssim\delta^{1-|\al|-\kappa},\quad (t,x)\in  D_0,
	\end{equation}
	with $\p=(\p_t,\p_x)$ here. Furthermore, it follows from $(\p_t+\p_x)(\p_t-\p_x)^k\p^\al\t\phi=(\p_t-\p_x)(\p_t+\p_x)^k\p^\al\t\phi=0$ for any integer $k\geq 1$ and $\t\phi(t,x)$ vanishes when $t=x$ or $t+x=2-\dl$ that
	\begin{align}
	&(\p_t-\p_x)^k\p^\al\t\phi(t,x)=0,\ (t,x)\in D_0\ \text{and}\ t-x\geq\dl,\label{tphileft}\\
	&(\p_t+\p_x)^k\p^\al\t\phi(t,x)=0,\ (t,x)\in D_0\ \text{and}\ t+x\geq 2\label{tphiright}
	\end{align}
	after integrating along integral curves of $\p_t+\p_x$ and $\p_t+\p_x$ respectively.
	
	Through this toy model, we detect extra smallness properties of the solution in different regions listed in \eqref{tphileft} and \eqref{tphiright}. This guides us how to divide the regions and treat them using different methods in our 2D problem; see Section \ref{LE} for more details.
	
	\paragraph{Outline of the proof.}	
	
	\
	
	To prove Theorem \ref{main}, in addition to study the local properties as above, we also need to overcome the difficulties caused by slow decay nature of 2D waves and by the presence of large initial data (of short pulse type) when $t$ is large. Thus to apply Klainerman's (hyperboloidal) vector-field method, new ingredients and non-trivial techniques should be engineered. We establish new estimates for waves on various types of surfaces, and design carefully chosen bootstraps that are then closed. A broad overview of the proof for Theorem \ref{main} is outlined below with key strategies detailed by sections.
	
	\begin{figure}[htbp]
		\centering
		\includegraphics[scale=0.7]{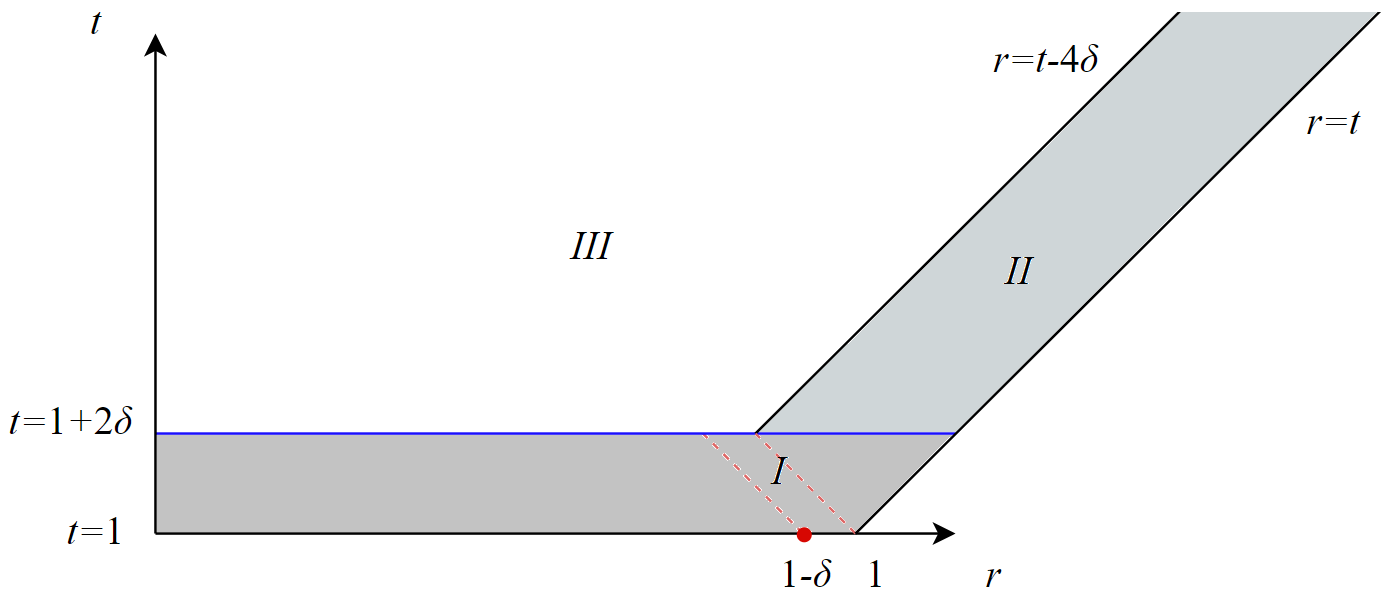}
		\caption{Progressive resolution within three decomposed regions}\label{pic:p4}
	\end{figure}
	
	\vskip 0.2 true cm

	\textbf{Part I: Local existence of $\phi$.} The initial data for \eqref{semi} are set at $t=1$ with support in $\{1-\delta\leq r\leq 1\}$, and we first establish local existence for the solution $\phi$ till $t=t_0=1+2\delta$ with several estimates  that track the smallness/largeness of the solution (inspired by \eqref{swequ}-\eqref{tphiright}). These estimates tell us some directional derivatives (say $\partial_t \pm \partial_r$) stay small in certain regions (see Theorem \ref{Th2.1} for more details), and these are vital to guide us to divide the whole region into one near the cone and an interior one, where the methods used to treat \eqref{semi} are quite different.
	
	\textbf{Part II. Global existence near the cone $\{r=t\}$.}  We expect the same smallness/largeness captured in the local existence part can be kept near the light cone $\{r=t\}$. If we do the estimates for the solution in a unified way in the whole region $\{ t\geq t_0, r\leq t  \}$, we will lose some smallness description of the solution in different regions. Our strategy is to first establish global existence of $\phi$ in the spacetime region $\{ 0\leq t-r \leq 4\delta \}$ (i.e., near the outermost outgoing cone) with control of the solution in this region.
	
	To show global existence in this region, our strategy is to first build estimates to control powers of $\delta^{-1}$ with a slow time growth of the energies, and then to remove this time growth via a new set of energy estimates with the same control of $\delta^{-1}$ power kept.


	\textbf{Part III: Global existence in the interior region.}
	To treat the interior region of the light cone $\{ r\leq t-2\delta \}$, due to the slow decay rate of the 2D wave solutions, we adopt the hyperboloidal vector-field method\footnote{This method was introduced by Klainerman in the Klein-Gordon context \cite{Klainerman85, Klainerman93} for small initial data, and was further developed by Hormander \cite{Hormander}, Psarelli \cite{Psarelli}, and more recently by LeFloch-Ma \cite{LM}, Klainerman-Wang-Yang \cite{SK-QW-SY} as well as many others.}, i.e., the vector-field method on hyperboloids.  In this paper, we extend this method to an initial boundary value problem with large data, and we use the hyperbolic time $\tau=\sqrt{(t+1)^2-r^2}$ to foliate the interior of the light cone. Taking advantage of this special foliation of the spacetime, this method allows one to benefit from the $(t-r)$-decay of the wave solution, and in addition, one is able to derive almost sharp decay of the wave solutions.
	
	In our setting, we need to solve an initial boundary value problem using the hyperboloidal method. In an analogous way to earlier works, we re-establish the (conformal) energy estimates, Klainerman-Sobolev inequality as well as other tools. We design a bootstrap setting that balances $\delta$-dependence and decay rate of the solution, and finally close it with various delicate estimates.

	\subsection{Organisation of the paper}
	
	In Section \ref{sec:notation}, we introduce pertinent notation. In Section \ref{LE}, we build a local existence result with several important estimates for late use. In Section \ref{YY}, we focus on global existence of the solution in the region near the cone $\{ r=t \}$. Finally, we prove global existence of the solution in the interior region and thus for Theorem \ref{main} in Section \ref{sec:interior}.

	\section{Preliminaries}\label{sec:notation}

	We first  introduce the some conventional notation. We set
	\begin{align*}
	&\o_0=-1,\\
	&\o^i:=\o_i=\f{x^i}{r},\quad i=1,2,\\
	&\o_\perp:=(-\o^2, \o^1).
	\end{align*}
	We use the null coordinates
	$$
	u=\f12(t-r),
	\qquad\qquad\qquad
	\underline u=\f12(t+r),
	$$
	and the null frame
	$$
	L:=\p_t+\p_r,
	\qquad\qquad\qquad
	\underline L:=\p_t-\p_r.
	$$	
	The usual vector fields include
	\begin{align*}
	\text{rotation} \,\,\,\,  \O&:= x^1\p_2 - x^2\p_1, \\
	\text{scaling} \,\,\,\,  S&:=t\p_t+r\p_r=\f{t-r}{2}\underline L+\f{t+r}{2}L, \\
	\text{Lorentz boosts} \,\,\,\,  H_i&:=t\p_i+x^i\p_t=\o^i\big(\f{r-t}{2}\underline L+\f{t+r}{2}L\big)+\f{t\o^i_\perp}{r}\O,\\
	\text{good derivatives} \,\,\,\,  T_i & :=\p_i+\o_i\p_t=\o^iL+\f{1}{r}\o^i_\perp\O.
	\end{align*}
	We denote $t_0 = 1+2\delta$ while the initial data are posed at $t=1$, and $\Sigma_{t}:=\{(s,x): s=t, x\in\mathbb R^2\}$.

	Let $m=(m_{\al\beta})=(m^{\al\beta})=diag(-1,1,1)$ denote the Minkowski metric, then $\Box=m^{\al\beta}\p_{\al\beta}^2$, here and throughout the whole paper, Einstein’s summation convention is used.

	For the positive quantities $f$ and $g$, $f\lesssim g$ means $f\leq Cg$ with generic positive constant $C$
	which is independent of $t, x$ and $\dl$.

	For any constants $g^{\al\beta}$ satisfying $g^{\al\beta}\xi_\al\xi_\beta\equiv0$ on ${\xi_0}^2={\xi_1}^2+{\xi_2}^2$, there exist some constants $G_1$ and $G_2^{\al\beta}$ such that
	\begin{equation}\label{QI}
	g^{\al\beta}(\p_\al\vp)(\p_\beta\psi)=G_1[(\p_t\vp)(\p_t\psi)-\nabla\vp\cdot\nabla\psi]+G_2^{\al\beta}[(\p_\al\vp)(\p_\beta\psi)-(\p_\al\psi)(\p_\beta\vp)],
	\end{equation}
	where $\nabla\varphi=(\p_1\varphi,\p_2\varphi)$ and $\nabla\varphi\cdot\nabla\psi=\sum_{i=1}^2(\p_i\varphi)(\p_i\psi)$. Thus,
	\begin{equation}\label{cnull}
	|g^{\al\beta}(\p_\al \vp)(\p_\beta \psi)|\lesssim |T\vp||\p \psi|+|\p \vp||T \psi|,
	\end{equation}
	where $|T\vp|=|T_1\vp|+|T_2\vp|$.

	\section{Local existence of the smooth solution $\phi$}\label{LE}
	
	In this section, we utilize the energy method
	to establish the local existence of the smooth solution $\phi$ to
	equation \eqref{semi} with \eqref{null} and \eqref{inital} for $1\leq t\leq t_0$, meanwhile, several
	key estimates of $\phi(t_0, x)$ on some special space domains are derived.

	\begin{lemma}	[Local existence and basic $L^\infty$ estimates.] \label{lem:local}
		Under the assumption \eqref{inital}, when $\delta>0$ is small, equation \eqref{semi} with \eqref{inital}
		admits a local smooth solution $\phi\in C^\infty([1, t_0]\times\mathbb R^2)$, which satisfies
		\begin{align}
		|L^a\p^\al\O^c\phi(t,x)|\lesssim\delta^{1-|\al|-\kappa},
		\end{align}
		with $a\leq 1$.
	\end{lemma}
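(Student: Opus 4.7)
The plan is a standard continuity/bootstrap argument built on top of classical local existence for semilinear wave systems. Since the initial data in \eqref{initial} are smooth with compact support in $\{1-\dl\le r\le 1\}$, standard theory produces a unique smooth solution $\phi$ on some maximal interval $[1,T^*)$, and by finite speed of propagation the support at time $t\in[1,t_0]$ is confined to the annulus $\{1-\dl-(t-1)\le r\le 1+(t-1)\}$, whose radial width never exceeds $O(\dl)$. The goal is to exclude $T^*\le t_0$ by propagating weighted $L^\infty$ bounds.

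The bootstrap I would impose, for a large fixed integer $N$, is
\[
\mathcal M_N(t):=\sum_{\substack{a+|\al|+c\le N\\a\le 1}}\dl^{-1+|\al|+\kappa}\|L^a\p^\al\O^c\phi(t,\cdot)\|_{L^\infty}\le 2C_0,
\]
with $C_0$ depending only on the profiles $(\phi_0^\dl,\phi_1^\dl)$. At $t=1$, for $a=0$ the bound follows directly from \eqref{pphi} (noting that $\O$ is tangential, hence does not produce radial derivatives), while for $a=1$ one uses \eqref{inital} after replacing the $\p_t^2$ component of $L\p^\al\O^c\phi$ via the equation $\p_t^2\phi^I=\triangle\phi^I+Q^I(\p\phi,\p\phi)$ and the lower-order bounds already obtained. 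To propagate, commute $Z^\beta:=L^a\p^\al\O^c$ with the system: $[\Box,\p]=[\Box,\O]=0$, while $[\Box,L]$ is a first-order operator in $\{L,\underline L,\O/r\}$ with coefficients bounded on the support (which stays in $r\ge 1-2\dl$), contributing only strictly lower-order terms. Applying $Z^\beta$ to $Q^I(\p\phi,\p\phi)$ and invoking the null-form bound \eqref{cnull}, every resulting term contains at least one factor of the schematic form $Z^{\beta_1}T\phi$ (equivalently $Z^{\beta_1}L\phi$ plus $Z^{\beta_1}\O\phi/r$), which under the bootstrap is of size $\dl^{1-|\al_1|-\kappa}$, i.e.\ carries one extra $\dl$ relative to a generic $\p$-derivative.

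I would then set up a suitably weighted $L^2$ energy
\[
\mathcal E_N(t):=\sum_{\substack{a+|\al|+c\le N\\a\le 1}}\dl^{2|\al|+2\kappa-1}\int_{\Sigma_t}|\p L^a\p^\al\O^c\phi|^2\,dx,
\]
where the $\dl^{2|\al|+2\kappa-1}$ weight is calibrated so that the expected size $|\p Z^\beta\phi|\sim\dl^{-|\al|-\kappa}$ on a support of radial width $\dl$ makes $\mathcal E_N(1)\lesssim 1$. A standard multiplier argument for $\Box$ combined with the null gain on the source yields $\tfrac{d}{dt}\mathcal E_N(t)\lesssim \mathcal E_N(t)+1$, and Gronwall on the tiny interval $[1,1+2\dl]$ then gives $\mathcal E_N(t)\lesssim 1$ throughout. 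A Sobolev embedding on $\Sigma_t$ adapted to the thin annular support (essentially a one-dimensional radial inequality carrying a $\dl^{1/2}$ factor) converts $\mathcal E_N$ back into the $L^\infty$ bound with improved constant $C_0$, closing the bootstrap for $\dl$ small and ruling out $T^*<t_0$.

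The main obstacle I anticipate is the careful bookkeeping of $\dl$-powers when commuting $[\Box,L]$ through and when combining the null bound with the Leibniz expansion of $Z^\beta Q^I$: at each subterm I must verify either that the $\dl^{1-|\al|-\kappa}$ scaling is preserved, or that the bonus $\dl$ coming from the one $L$ or $T$ factor genuinely materializes, and in particular that no term forces two $L$-derivatives to land on the same factor, since only a single $L$ is controlled by the hypothesis \eqref{inital}.
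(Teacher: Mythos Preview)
Your scheme is workable, but it both differs from the paper's proof and contains the gap you yourself flag at the end. The paper does \emph{not} commute with $L$. It commutes instead with $Z_g\in\{S,H_i\}$, for which $[\Box,S]=2\Box$ and $[\Box,H_i]=0$, so there is no $[\Box,L]$ bookkeeping at all; it runs a weaker bootstrap $|\p^\al\O^cZ_g^\nu\phi|\le\dl^{1/2-|\al|}$ ($|\nu|\le1$) and closes the weighted energy $M_n(t)=\sum\dl^{2|\al|}\|\p\p^\al\O^cZ_g^\nu\phi\|_{L^2}^2$ \emph{without} ever invoking the null structure. The crude product bound already gives $\dl^{|\al|}|\Box w|\lesssim\dl^{-1/2}\sum\dl^{|\al_1|}|\p\p^{\al_1}\O^{c_1}Z_g^{\nu_1}\phi|$, and Gronwall over $[1,1+2\dl]$ absorbs the $\dl^{-1/2}$. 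The Sobolev step is the circle inequality $|w|\lesssim r^{-1/2}\|\O^{\le1}w\|_{L^2(\mathbb S^1_r)}$ combined with a radial $\dl^{1/2}$ factor, and the $L$-estimate is recovered only at the very end, algebraically, via $L=(S+\o^iH_i)/(t+r)$.

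Your reliance on the null bound \eqref{cnull} is precisely what creates the two-$L$ obstacle, and it cannot be circumvented within your setup: once the single $L$ has already landed on $\psi_1$, the estimate $|Q(\p\psi_1,\p\psi_2)|\lesssim|T\psi_1||\p\psi_2|+|\p\psi_1||T\psi_2|$ necessarily contains the summand $|T\psi_1|\sim|L\psi_1|+|\O\psi_1|/r$, forcing $L^2$ on that factor; both summands appear in \eqref{cnull} and neither can be discarded. The remedy is simply to drop the null gain for this lemma. With the crude bound $|Q|\lesssim|\p\psi_1||\p\psi_2|$ you get $\tfrac{d}{dt}\mathcal E_N\lesssim\dl^{-\kappa}\mathcal E_N$ rather than $\mathcal E_N+1$, but on an interval of length $2\dl$ this is harmless. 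With that modification your argument closes; the paper's choice of $S,H_i$ buys exact commutation with $\Box$ and sidesteps the issue entirely.
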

	\begin{proof}
		Denote $Z_g$ by any fixed vector filed in $\{S, H_i, i=1,2\}$. Suppose that for $1\leq t\leq t_0$, $\nu\in \mathbb N_0^3$
		and $N_0\in\mathbb N_0$ with $N_0\ge 3$,
		\begin{equation}\label{ba}
		|\p^\al\O^cZ_g^\nu\phi|\leq\delta^{\f12-|\al|}\quad (|\al|+c+|\nu|\leq N_0,\quad |\nu|\leq 1).
		\end{equation}
		We define the following energies for $\phi$
		$$
		M_n(t):=\sum_{|\al|+c+|\nu|\leq n}\delta^{2|\al|}\|\p\p^\al\O^cZ_g^\nu\phi(t,\cdot)\|_{L^2(\mathbb R^2)}^2
		$$
		for $n\leq 2N_0-2$ and $|\nu|\leq 1$.
		Set $w^I=\delta^{|\al|}\p^\al\O^cZ_g^\nu\phi^I$ with $|\al|+c+|\nu|\leq 2N_0-2$. Then it follows from equation \eqref{semi}
		and direct computations that
		\begin{equation}\label{int}
		\int_{\Sigma_t}|\p w^I(t, x)|^2dx=\int_{\Sigma_1}|\p w^I(1, x)|^2dx-2\int_1^t\int_{\Sigma_\tau}(\p_\tau w^I\Box w^I)(\tau,x)dxd\tau
		\end{equation}
		with
		\begin{equation}\label{bgw}
		\Box w^I=\dl^{|\al|}\sum_{|\beta|\leq|\nu|}C_\beta^\nu\p^\al\O^c Z_g^\beta\Big(Q^I(\p\phi,\p\phi)\Big)\\
		\end{equation}
		where $C_\beta^\nu$ are constants depending on $\beta$ and $\nu$.
		It follows from \eqref{ba} that $$\ds|\Box w^I|\lesssim\sum_{\tiny\begin{array}{c}
			|\al_1|\leq|\al|, c_1\leq c,\\|\nu_1|\leq|\nu|\end{array}}\dl^{-\f12+|\al_1|}|\p\p^{\al_1}\O^{c_1}Z_g^{\nu_1}\phi|.$$
		Therefore,
		\begin{equation}\label{ew}
		\begin{split}
		&\int_{\Sigma_t}|\p w^I(t, x)|^2dx
		\lesssim\int_{\Sigma_1}|\p w^I(1, x)|^2dx+\dl^{-\f12}\int_1^tM_n(\tau)d\tau
		\end{split}
		\end{equation}
		by \eqref{int}, which implies
		$$
		M_{2N_0-2}(t)\lesssim M_{2N_0-2}(1)\lesssim\delta^{1-2\kappa}\ \ \text{for}\ t\in[1,t_0]
		$$
		with the help of Gronwall's inequality.
		
		We next close the bootstrap assumption \eqref{ba}. By the following Sobolev's imbedding theorem on the circle $\mathbb S^1_r$
		(with center at the origin and radius $r$):
		$$
		|w^I(t,x)|\lesssim \f{1}{\sqrt{r}}\|\O^{\leq 1}w^I\|_{L^2(\mathbb S^1_r)},
		$$
		together with $r\sim 1$ for $t\in [1,t_0]$ and $(t,x)\in \textrm{supp}\ w^I$, one then has
		\begin{equation}\label{le}
		|\p^\al\O^c Z_g^\nu\phi(t,x)|\lesssim \|\O^{\leq 1}\p^\al\O^c Z_g^\nu\phi\|_{L^2(\mathbb S^1_r)}\lesssim\delta^{1/2}\|\p\O^{\leq 1}\p^\al\O^c Z_g^\nu\phi\|_{L^2(\Sigma_{t})}\lesssim\delta^{1-|\al|-\kappa},\quad
		\end{equation}
		when $|\al|+c+|\nu|\leq 2N_0-3$ and $|\nu|\leq 1$. Therefore, \eqref{ba} is closed.
		
		In addition, by $\ds L=\f{S+\o^iH_i}{t+r}$, we arrive at
		\begin{equation}\label{Lle}
		|L^a\p^\al\O^c\phi(t,x)|\lesssim\sum_{|\nu|\leq 1}|Z_g^\nu\p^\al\O^c\phi(t,x)|\lesssim\delta^{1-|\al|-\kappa}
		\end{equation}
		with $|\al|+c+a\leq 2N_0-3$ and $a\leq 1$.
	\end{proof}
	
	We get rough estimates for the local solution $\phi$ within its support in Lemma \ref{lem:local}.  Motivated by the brief discussion in Section \ref{int:1.4}, we expect more detailed estimates of the local solution varying in different spacetime regions, which are stated in the following theorem.
	
	\begin{theorem}[Detailed estiamtes for the local solution] \label{Th2.1}
		Under the same assumptions as Lemma \ref{lem:local}, for any  $m, n,c\in\mathbb N_0$, $\al\in\mathbb N_0^3$, we have
		\begin{enumerate}[(i)]
			\item
			\begin{align}
			&|L^m\p^\al\O^c\phi(t_0,x)|\lesssim\delta^{1-|\al|-\kappa},\quad r \in[1-2\delta, 1+2\delta],\label{local1-2}\\
			&|\underline L^n\p^\al\O^c\phi(t_0,x)|\lesssim\delta^{1-|\al|-\kappa},\quad r\in [1-3\delta, 1+\delta].\label{local2-2}
			\end{align}
			\item
			{\begin{equation}\label{local3-2}
			|\p^\al\O^c\phi(t_0,x)|\lesssim
			\delta^{2-|\al|-\kappa},
			\quad r\in [1-3\delta, 1+\delta].
			\end{equation}}
			\item
			\begin{equation}\label{local3-3}
			|\underline L^nL^m\O^c\phi(t_0,x)|\lesssim\delta^{1-\kappa},\quad r\in[1-2\delta,1+\delta].
			\end{equation}
		\end{enumerate}
	\end{theorem}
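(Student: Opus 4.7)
The plan is to rewrite the equation in the null frame and integrate along null characteristics whose feet on $\{t=1\}$ miss the initial data support $\{1-\delta\le r\le 1\}$, combining this geometric observation with the null structure \eqref{cnull} of the nonlinearity to extract the claimed $\delta$-powers. Using $\Box=-L\underline L+(L-\underline L)/(2r)+\O^2/r^2$ in the null frame, the equation $\Box\phi^I=Q^I(\p\phi,\p\phi)$ becomes
\begin{equation*}
L\underline L\,\phi^I \;=\; -Q^I(\p\phi,\p\phi) \;+\; \frac{(L-\underline L)\phi^I}{2r} \;+\; \frac{\O^2\phi^I}{r^2},
\end{equation*}
and by \eqref{cnull} every summand of $Q^I(\p\phi,\p\phi)$ carries a ``good'' derivative $T\in\{L,r^{-1}\O\}$. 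A backward $\underline L$-characteristic from $(t_0,x)$ reaches $(1,(r+2\delta)\omega)$, while a backward $L$-characteristic reaches $(1,(r-2\delta)\omega)$, of length $2\delta$ in either case.

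\emph{Proof of \eqref{local1-2}.} I would proceed by induction on $m$, with base case $m=0$ given by Lemma \ref{lem:local}. For the inductive step, commuting $L^{m-1}\p^\al\O^c$ into the null-frame equation, using $[L,\underline L]=0$, and absorbing the $r^{-1}$-weighted commutators into strictly lower-order terms controlled by the inductive hypothesis together with Lemma \ref{lem:local}, yields
\begin{equation*}
\underline L\bigl(L^m\p^\al\O^c\phi^I\bigr) \;=\; -L^{m-1}\p^\al\O^c\,Q^I(\p\phi,\p\phi) \;+\; (\text{l.o.t.}).
\end{equation*}
The null structure forces one factor in each resulting product to be a good derivative; by the inductive bound and Lemma \ref{lem:local}, the right-hand side is bounded by $\delta^{-|\al|-\kappa}$. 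Integrating along the backward $\underline L$-characteristic, the boundary contribution vanishes for $r\in[1-2\delta,1+2\delta]$ because the foot $r+2\delta\in[1,1+4\delta]$ lies outside $[1-\delta,1]$, and the integral contributes $2\delta\cdot\delta^{-|\al|-\kappa}=\delta^{1-|\al|-\kappa}$. Estimate \eqref{local2-2} is obtained by the symmetric argument along backward $L$-characteristics; for $r\in[1-3\delta,1+\delta]$ the foot $r-2\delta\in[1-5\delta,1-\delta]$ again misses the initial support.

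\emph{Proof of \eqref{local3-2} and \eqref{local3-3}.} Writing $\p_t=\tfrac12(L+\underline L)$ and $\p_i=\tfrac12\omega^i(L-\underline L)+r^{-1}\omega_\perp^i\O$, I expand $\p^\al\O^c\phi$ as a sum of terms $L^a\underline L^b\O^{c'}\phi$ with $a+b+c'\le|\al|+c$ and bounded coefficients. In the region $r\in[1-3\delta,1+\delta]$, each such term is controlled by \eqref{local1-2} or \eqref{local2-2}: for $|\al|\le 1$ this yields $\delta^{1-\kappa}$ directly, while for $|\al|\ge 2$ each summand carries at least two null-frame factors, each of which saves one power of $\delta$ over the generic bound, producing $\delta^{2-|\al|-\kappa}$. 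Estimate \eqref{local3-3} is the same reasoning applied to a pure null-frame string in the intersection $r\in[1-2\delta,1+\delta]$ of the two validity regions.

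The principal obstacle will be the bookkeeping inside the induction for \eqref{local1-2}-\eqref{local2-2}: one has to verify that after distributing $L^{m-1}\p^\al\O^c$ across the quadratic $Q^I(\p\phi,\p\phi)$, every summand still retains a good derivative on one of its two factors so that the $\delta$-gain from \eqref{cnull} is never lost, and that commutators of $L,\underline L$ with the $r^{-1}$ coefficients only produce terms of strictly lower order that already fit the inductive hypothesis. A secondary subtlety is that \eqref{inital} controls $L$-derivatives at $t=1$ but not $\underline L$-derivatives, so the symmetric argument for \eqref{local2-2} must rely purely on the support geometry (the foot of the $L$-characteristic missing $[1-\delta,1]$) rather than on any initial null bound on $\underline L\phi$.
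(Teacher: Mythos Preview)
Your overall strategy coincides with the paper's: rewrite \eqref{semi} in the null frame, induct on the number of $L$'s (resp.\ $\underline L$'s), and integrate the commuted equation along $\underline L$ (resp.\ $L$) characteristics whose feet lie outside the data support. Two points, however, need repair. In the inductive step for \eqref{local1-2} your claim that ``the right-hand side is bounded by $\delta^{-|\alpha|-\kappa}$'' is not quite true: after commuting $L^{m-1}\partial^\alpha\Omega^c$ through, the linear term $\tfrac{1}{2r}L\phi^I$ and those null-form contributions in which the good factor is $L$ both reproduce the top-order unknown $L^{m}\partial^{\alpha_1}\Omega^{c_1}\phi$ on the right-hand side, with coefficient of size $\lesssim\delta^{-\kappa}$. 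The paper closes this loop by summing over $(\alpha_1,c_1)$, integrating along $\underline L$, and applying Gronwall; since the characteristic has length $O(\delta)$ the Gronwall exponent is $O(\delta^{1-\kappa})$ and is harmless. Your proposal skips this step.

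The more serious gap concerns \eqref{local3-3}. The claim that it follows by ``the same reasoning applied to a pure null-frame string'' does not go through: \eqref{local1-2} and \eqref{local2-2} control $L^m\partial^\alpha\Omega^c\phi$ and $\underline L^n\partial^\alpha\Omega^c\phi$ separately, but combining them algebraically for a mixed string $\underline L^nL^m\Omega^c\phi$ with $m,n\ge 1$ only yields $\delta^{1-m-\kappa}$ (expand $L^m$ back into $\partial$'s and apply \eqref{local2-2}), not $\delta^{1-\kappa}$. The paper instead returns to the equation $L\underline L\phi^I=\tfrac{1}{2r}(L-\underline L)\phi^I+r^{-2}\Omega^2\phi^I-Q^I$ and runs a \emph{separate} double induction in the overlap region $r\in[1-2\delta,1+\delta]$: first fix one $L$ and induct on the number $n$ of $\underline L$'s (using \eqref{local2-2} to start), then induct on $m$. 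Your argument for \eqref{local3-2} is essentially correct, though only one power of $\delta$ is actually saved (write one $\partial$ as $\tfrac12(L+\underline L)$ and apply \eqref{local1-2}, \eqref{local2-2} with $|\alpha|$ replaced by $|\alpha|-1$), not two as you suggest.
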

	\begin{remark}	
		By Huygens principle, it is easy to know $\phi$ is supported in $r \in [1-3\delta, 1+2\delta]$ at $t=t_0 = 1+2\delta$. Our estimates in Theorem \ref{Th2.1} provide a more precise description of the solution at $t=t_0$, i.e., different weighted derivatives on $\phi$ give different smallness/largeness in different regions, which guides us to treat differently the regions $\{ r\geq t - 4\delta \}$ and $\{r \leq t-2\delta\}$ when $t\geq t_0$.
	\end{remark}	
	
	\begin{remark}
		We note \eqref{local1-2} indicates that  hitting $L$ on $\phi$ will not increase largeness (i.e., negative powers of $\delta$) in the region $\{ r\geq t - 4\delta, t\geq t_0\}$. This is important for the proof in Section \ref{YY}.
	\end{remark}	
	\begin{remark}
		We recall that $\p \phi$ does not have smallness (except $L \phi$) at $t=1$. Very interestingly, $\p \phi$ enjoys smallness in the region $r\in [1-3\delta, 1+\delta]$ at $t=t_0$; see \eqref{local2-2}, \eqref{local3-2}. This is vital for the argument in Section \ref{sec:interior} to work.
	\end{remark}
	\begin{proof}
		We will study two different regions in Figure \ref{pic:p1} and Figure \ref{pic:p2} in the following proof.	
		
		\textbf{Step 1: Proof of \eqref{local1-2}.}
		
		In Lemma \ref{lem:local} we already show \eqref{local1-2} for $m=1$, and now we prove that \eqref{local1-2} also holds for $m\geq 1$.
		\begin{figure}[htbp]
			\centering
			\includegraphics[scale=0.2]{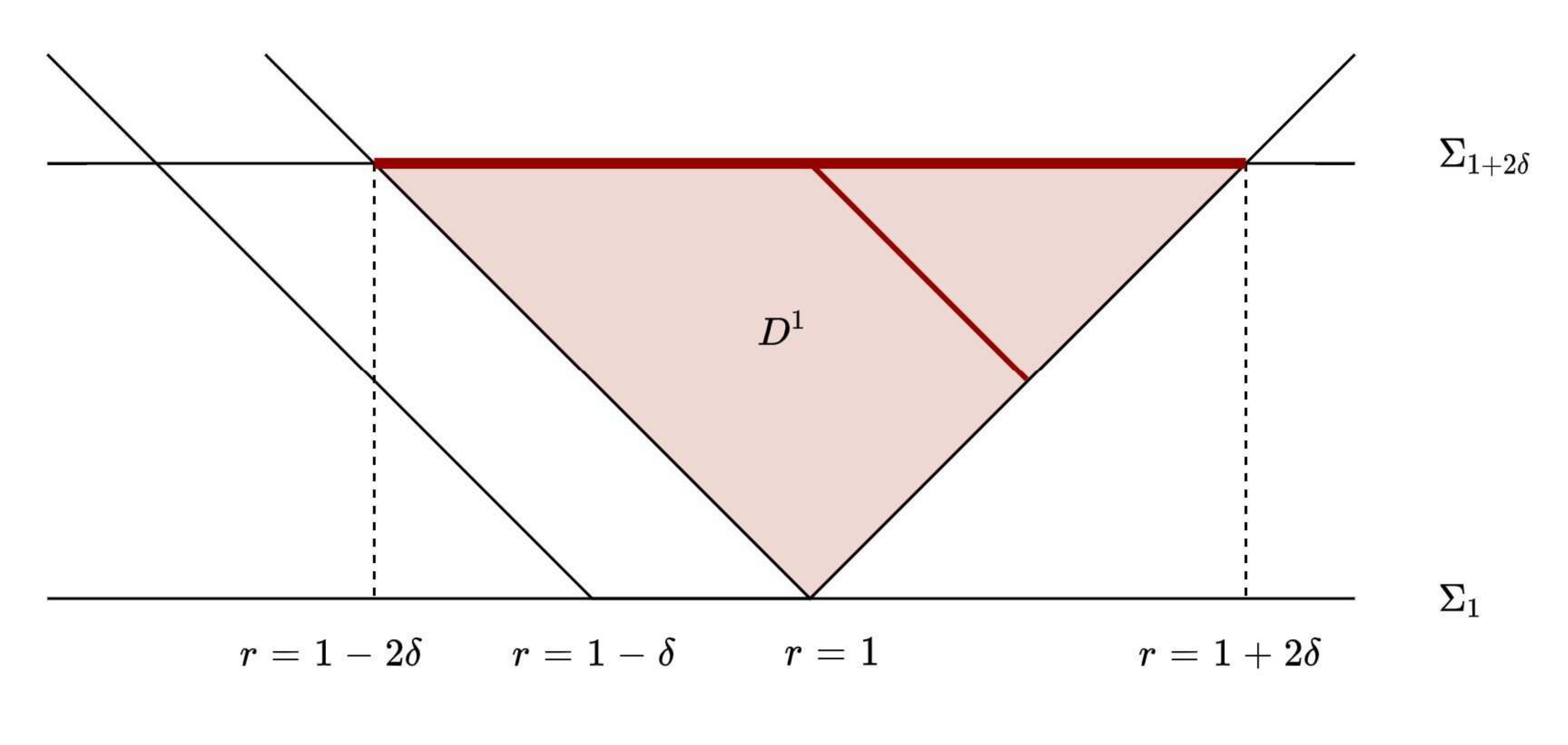}
			\caption{Space-time domain $D^1=\{(t,r): 1\le t\le t_0, 2-t\le r\le t\}$}\label{pic:p1}
		\end{figure}
		Now we start to improve the $L^\infty$ estimate of $\phi(t,x)$ in $D^1$. To this end,
		we rewrite equation \eqref{semi} as
		\begin{equation}\label{me}
		L\underline L\phi^I=\f{1}{2r}L\phi^I-\f{1}{2r}\underline L\phi^I+\f{1}{r^2}\Omega^2\phi^I-Q^I(\p\phi,\p\phi).
		\end{equation}
		We prove \eqref{local1-2} with an induction argument, that is, assume that in $D^1$,
		\begin{equation}\label{Lm}
		|L^m\bar\p^{\iota}\O^c\phi|\lesssim\dl^{1-\kappa-|\iota|}, \ \ \text{for}\ |\iota|+c+2m+1\leq N_0\ \text{and}\ 1\leq m\leq m_0,
		\end{equation}
		where $\bar\p\in\{\p_t,\p_r\}$ and $\iota\in\mathbb N_0^2$. Then, if $|\iota|+c+2m_0+2\leq N_0$,
		acting the operator $L^{m_0}\bar\p^{\iota}\O^c$ on both sides of \eqref{me}, then using \eqref{Lm} and \eqref{cnull} to get
		\begin{equation*}
		\delta^{|\iota|}|\underline L L^{m_0+1}\bar\p^{\iota}\O^c\phi(t,x)|
		\lesssim\sum_{\tiny\begin{array}{c}
			|\iota_1|\leq|\iota|,\\ c_1\leq c\end{array}}\dl^{-\kappa+|\iota_1|}|L^{m_0+1}\bar\p^{\iota_1}\O^{c_1}\phi(t,x)|+\dl^{-\kappa},\ (t,x)\in D^1.
		\end{equation*}
		Taking sums over the indices, we get
		\begin{equation}\label{uLL}
		\aligned
		&\sum_{\tiny\begin{array}{c}
			|\iota_1|\leq|\iota|,\\ c_1\leq c\end{array}}\delta^{|\iota_1|}|\underline L L^{m_0+1}\bar\p^{\iota_1}\O^{c_1}\phi(t,x)|
		\\
		\lesssim
		&\sum_{\tiny\begin{array}{c}
			|\iota_1|\leq|\iota|,\\ c_1\leq c\end{array}}\dl^{-\kappa+|\iota_1|}|L^{m_0+1}\bar\p^{\iota_1}\O^{c_1}\phi(t,x)|+\dl^{-\kappa},\,\,\,\,\quad (t,x)\in D^1.
		\endaligned
		\end{equation}	
		Integrating \eqref{uLL} along integral curve of $\underline L$, using the fact $\phi$ vanishes on the outermost characteristic surface, and applying Gronwall inequality, we get
		\begin{equation*}
		|L^{m_0+1}\bar\p^{\iota}\O^c\phi|\lesssim\dl^{1-\kappa-|\iota|}\ \ \text{in}\ D^1.
		\end{equation*}

		\textbf{Step 2: Proof of the rest estimates.}	
		\begin{figure}[htbp]
			\centering
			\includegraphics[scale=0.2]{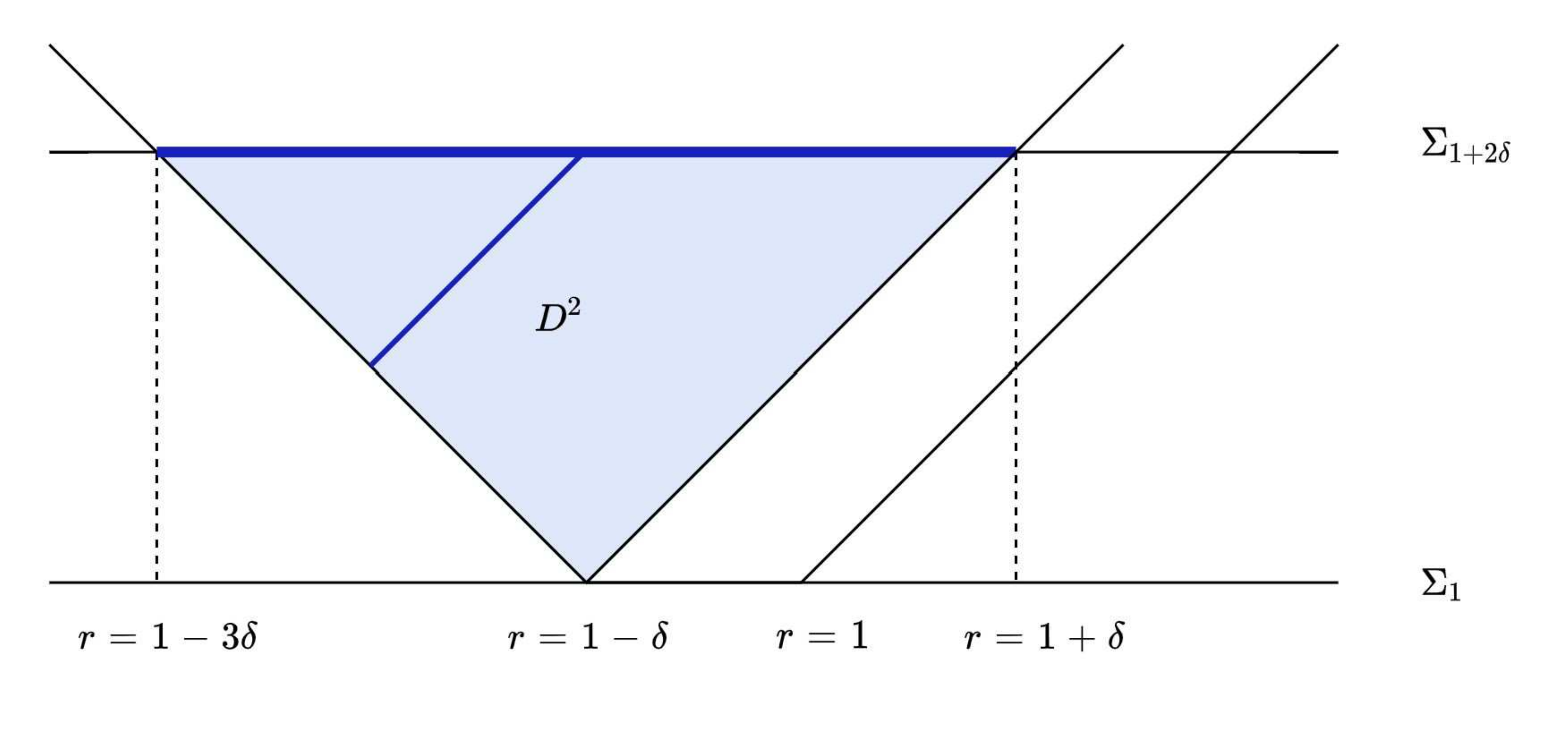}
			\caption{Space domain for $D^2=\{(t,r): 1\le t\le t_0, 2-\dl-t\le r\le t-\dl\}$}\label{pic:p2}
		\end{figure}
		In a similar way, it follows from \eqref{Lle} and \eqref{me} that $|L\underline L\bar\p^\iota\O^c\phi|\lesssim\dl^{-\kappa-|\iota|}$ in $D^2$, integrating
		along integral curves of $L$ yields that for $r\in [1-3\delta,1+\delta]$ (see Figure \ref{pic:p2}),
		\begin{equation*}
		|\underline L\bar\p^\iota\O^c\phi(t_0,x)|\lesssim\delta^{1-|\iota|-\kappa},\quad |\iota|+c+2\leq N_0.
		\end{equation*}
		An induction argument gives that
		\begin{equation}\label{ii}
		\underline L^m\bar\p^\iota\O^c\phi(t_0,x)|\lesssim\delta^{1-|\iota|-\kappa},\quad |\iota|+c+1+m\leq N_0.	
		\end{equation}
		Furthermore, since
		$\p_t=\f12(L+\underline L)$ and $\p_i=\f{\o^i}{2}(L-\underline L)+\f{\o^i_{\perp}}{r}\O$,
		so \eqref{ii}, \eqref{me} and \eqref{Lle} imply that when $2|\al|+c\leq N_0$
		and $r\in [1-3\delta, 1+\delta]$,
		\begin{equation}\label{iid}
		|\p^\al\O^c\phi(t_0,x)|\lesssim
		\left\{
		\begin{aligned}
		\delta^{1-\kappa},\quad&\text{as}\quad|\al|\leq 1,\\
		\delta^{2-|\al|-\kappa},\quad&\text{as}\quad|\al|> 1.
		\end{aligned}
		\right.
		\end{equation}
		{In addition, it follows from $|L\O^c\phi|\lesssim\delta^{1-\kappa}$ in $D^2$ that 
		\begin{equation}\label{Ophi}
		|\O^c\phi(t_0,x)|\lesssim\dl^{2-\kappa},
		\end{equation}}
		which implies \eqref{local3-2} immediately.
		
		Now we are ready to prove the final estimate \eqref{local3-3}. We are already know that
		\begin{equation}\label{LuL}
		|L^m\O^c\phi(t_0,x)|+|\underline L^s\O^c\phi(t_0,x)|\lesssim\dl^{1-\kappa}
		\end{equation}
		from \eqref{Lm} and \eqref{ii} when $|x|\in[1-2\dl,1+\dl]$, $m+c+1\leq N_0$ and $s+c+1\leq N_0$. Assume that
		\begin{equation}\label{Ls}
		|\underline{L}^{s}L\O^c\phi(t_0,x)|\lesssim\dl^{1-\kappa},\quad s+c+2\leq N_0\quad\text{and}\quad0\leq s\leq s_0,
		\end{equation}
		when $|x|\in[1-2\dl,1+\dl]$. It follows from \eqref{me}, \eqref{ii} and \eqref{Ls} that
		\begin{equation*}
		\aligned
		&|\underline L^{s_0+1}L\O^c\phi^I|
		\\
		=&|\underline L^{s_0}\O^c\big(\f{1}{2r}L\phi^I-\f{1}{2r}\underline L\phi^I+\f{1}{r^2}\Omega^2\phi^I-Q^I(\p\phi,\p\phi)\big)|\lesssim\dl^{1-\kappa}
		\endaligned
		\end{equation*}
		for $s_0+c+3\leq N_0$. Thus, \eqref{Ls} holds for any integer $s$ satisfying $s+c+2\leq N_0$. Finally, with an induction argument on $m$, \eqref{me} and \eqref{LuL} yield
		\begin{equation}\label{uLs}
		|\underline L^sL^m\O^c\phi(t_0,x)|\lesssim\dl^{1-\kappa},\quad|x|\in[1-2\dl,1+\dl]\quad\text{and}\quad s+m+c+1\leq N_0.
		\end{equation}
		
		The proof is completed.
	\end{proof}
	
	\section{Global existence near the outermost outgoing cone}\label{YY}
	
	In Section \ref{LE}, we have obtained the local solution for $t\in[1,t_0]$. Moreover, on the hypersurface $\Sigma_{t_0}$, the solution to \eqref{semi} with \eqref{initial} presents different properties in different domains. To be more precise, we see from \eqref{local1-2} that $|L^m \Omega^c \phi|$ stays small in the region $\{ t_0 -r \leq 4\delta \}$ for all $m\geq 0, c\geq 0$.
	This motivates us to distinguish the regions $A_{4\delta}=\{(t,x):t\geq t_0,0\leq t-r\leq 4\delta\}$ and the one inside it.
	Thus, we next divide our proof of global existence into two parts: near the outermost outgoing cone and exactly inside it. In this section, our purpose is to prove the solution to \eqref{semi} equipped with \eqref{local1-2} exists in $A_{4\delta}$ which is near the outermost outgoing cone.
	Worth to mention, the smallness of the width of $A_{4\delta}$ (which is $4\delta$) also plays a vital role in the analysis.

	In this section, we raise and lower indices with the Minkowski metric $m=(m_{\al\beta})=(m^{\al\beta})=\text{diag}(-1,1,1)$ and tacitly sum over repeated indices.
	
	\begin{lemma}\label{nullframe}
		$\{L, {\underline L}, \O\}$ constitutes a null frame with respect to the metric $(m_{\al\beta})$, and admits the following identities:
		\begin{align*}
		&m(L,L)=m({\underline L}, {\underline L})=m(L, \O)=m({\underline L}, \O)=0,\\
		&m(L, {\underline L})=-2,\quad m(\O, \O)=r^2.
		\end{align*}
	\end{lemma}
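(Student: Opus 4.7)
The statement is a direct computational identity, so my plan is to simply expand each vector field in the Cartesian coordinate basis $\{\partial_t,\partial_1,\partial_2\}$ and evaluate the Minkowski inner product component-wise. No structural argument is needed; the content is that the definitions of $L$, $\underline L$, and $\Omega$ are engineered precisely so that these inner products come out as stated.

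First I would write $\partial_r = \omega^i\partial_i$ with $\omega^i = x^i/r$, so that $L$ has components $(1,\omega^1,\omega^2)$ and $\underline L$ has components $(1,-\omega^1,-\omega^2)$ in the $(\partial_t,\partial_1,\partial_2)$ basis, while $\Omega = x^1\partial_2 - x^2\partial_1$ has components $(0,-x^2,x^1)$. Using $m = \mathrm{diag}(-1,1,1)$, I would then compute
\begin{align*}
m(L,L) &= -1 + (\omega^1)^2 + (\omega^2)^2 = -1 + 1 = 0,\\
m(\underline L,\underline L) &= -1 + (\omega^1)^2 + (\omega^2)^2 = 0,\\
m(L,\underline L) &= -1 - (\omega^1)^2 - (\omega^2)^2 = -2,
\end{align*}
using $(\omega^1)^2+(\omega^2)^2 = 1$.

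Next I would handle the pairings with $\Omega$. Since $\Omega$ has vanishing time-component,
\[
m(L,\Omega) = \omega^1(-x^2) + \omega^2(x^1) = \tfrac{1}{r}(-x^1 x^2 + x^2 x^1) = 0,
\]
and similarly $m(\underline L,\Omega) = 0$ since replacing $\omega^i$ by $-\omega^i$ only flips the sign of zero. Finally,
\[
m(\Omega,\Omega) = (-x^2)^2 + (x^1)^2 = r^2.
\]
These exhaust the six inner products claimed in the lemma, and the fact that $m(L,L) = m(\underline L,\underline L) = 0$ together with $m(L,\underline L)=-2 \neq 0$ shows that $\{L,\underline L,\Omega\}$ is a null frame in the standard sense.

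There is no real obstacle: the whole statement is a one-line verification once one writes the frame in Cartesian components and uses $|\omega|=1$. The only thing worth being mildly careful about is the sign convention, namely that the Minkowski signature is $(-,+,+)$ and that $\omega_i = \omega^i$ in this signature, so that the identities in Lemma \ref{nullframe} are consistent with the notation fixed in Section \ref{sec:notation}.
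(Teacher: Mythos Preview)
Your proof is correct and follows exactly the approach the paper indicates: the paper's own proof simply states ``Direct calculations yield the above desired results. We omit the details here,'' and your explicit component-wise verification in the Cartesian frame is precisely that omitted direct calculation.
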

	\begin{proof}
		Direct calculations yield the  above desired results. We omit the details here.
	\end{proof}
	Inspired by \cite{Ding6} or \cite{Ding3}, one can perform the change of coordinates:
	$(t, x^1, x^2)\longrightarrow (s, u, \theta)$ near $C_0=\{(t, x): t\ge 1+2\dl, t=r\}$ with
	\begin{equation}\label{H0-7}
	\left\{
	\begin{aligned}
	&s=t,\\
	&u=\f12(t-|x|),
	\end{aligned}
	\right.
	\end{equation}
	and $\th$ is the coordinate on the standard circle $\mathbb S^1$. Then, under new coordinate system $(s,u,\theta)$,
	\begin{equation}\label{suth}
	\p_s=L = \p_t+\p_r,\qquad \p_u=\underline L-L= -2\p_r,\qquad \p_\th=\O,
	\end{equation}
	and
	\begin{equation}\label{m}
	m=-4dsdu+4dudu+r^2d\th d\th
	\end{equation}
	by Lemma \ref{nullframe}. Furthermore, we introduce the following subsets (see Figure \ref{pic:03} below):
	\begin{figure}[htbp]
		\centering\includegraphics[scale=0.48]{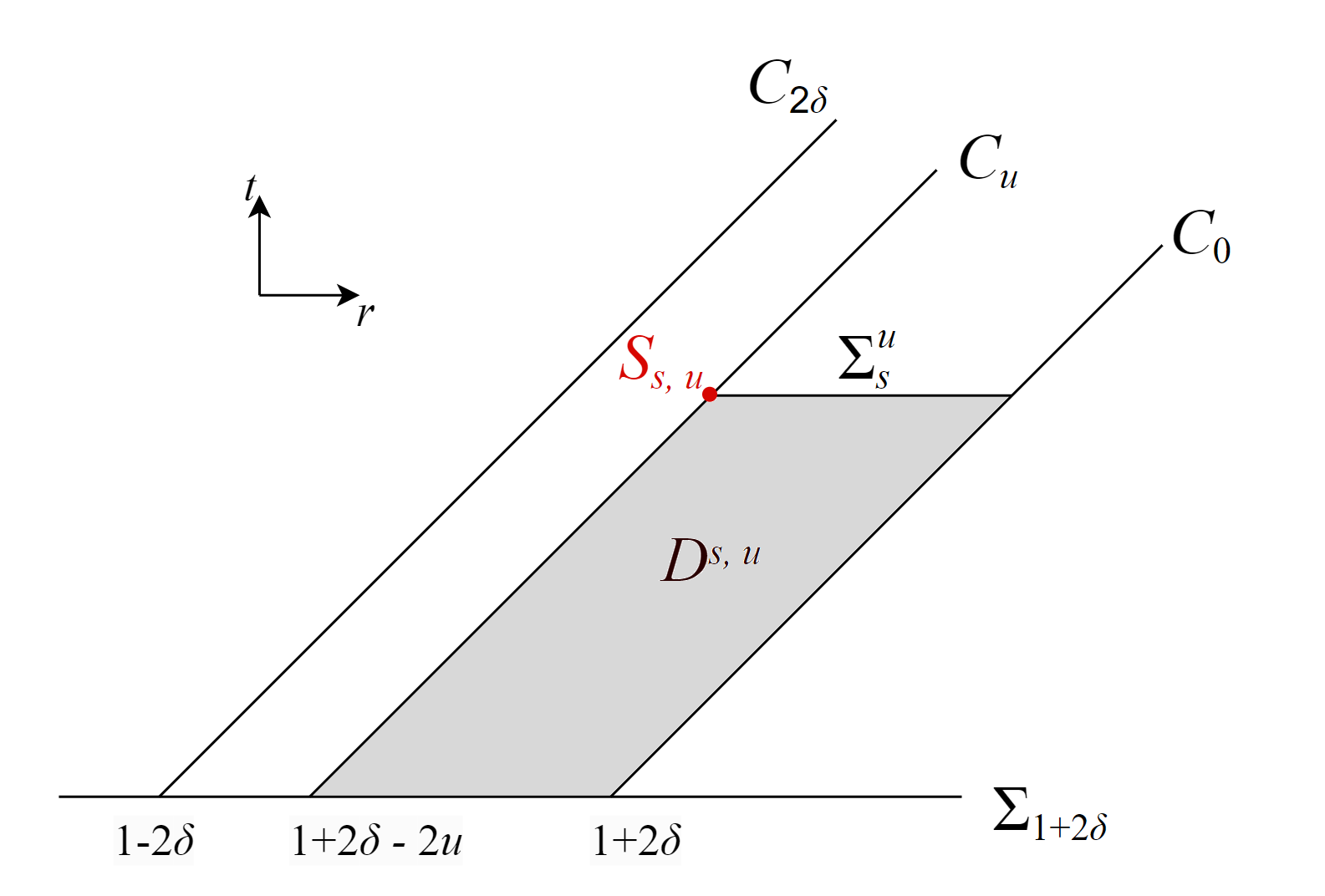}\label{pic:03}
		\caption{ Regional diagram}
	\end{figure}
	
	\begin{definition} Set
		\begin{align*}
		&\Sigma_{s}^{u}:=\{({s}',u',\theta): {s}'=s,0\leq u'\leq  u\},\quad u\in [0, 2\delta],\\
		&C_{u}:=\{({s}',u',\theta): {s}'\geq t_0, u'=u\},\\
		&C_{u}^{s}:=\{({s}',u',\theta): t_0\leq {s}'\leq {s}, u'=u\},\\
		&S_{{s}, u}:=\Sigma_{s}\cap C_{u},\\
		&D^{{s}, u}:=\{({s}', u',\theta): t_0\leq {s}'<{s}, 0\leq u'\leq u\}.
		\end{align*}
	\end{definition}
	
	Next, we introduce some notation for  related integrations.
	\begin{definition}\label{2.3}
		For any continuous function $f$ and tensor field $\xi$, define
		\begin{align*}
		&\int_{ C^s_{u}}f:=\int_{t_0}^s\int_{\mathbb S^1}f(s', u,\theta) d\th ds',\\
		&\int_{\Sigma_s^{u}}f:=\int_0^{u}\int_{\mathbb S^1}f(s, u',\theta) d\th d u',\\
		&\int_{D^{s, u}}f:=\int_{t_0}^s\int_0^{ u}\int_{\mathbb S^1}f(s', u',\theta)d\th d u'ds',\\
		&\|\xi\|_{s,u}:=\sqrt{\int_{\Sigma_s^{ u}}|\xi|^2}.
		\end{align*}
	\end{definition}
	
	For any vector field $J=J^\al\p_\al$, we rewrite it as
	$$J=J^s\p_s+J^u\p_u+\slashed J^\th\p_\th,$$
	where $J^s$, $J^u$ and $\slashed J^\th$ are functions. Let $\mathscr D$ denote the Levi-Civita connection of $m$, then
	\begin{lemma}\label{4.2}
		For any smooth vector field $J$ which vanishes on $C_0$, we have the following divergence identity on $D^{s,u}$:
		\begin{equation}\label{divergence}
		\begin{split}
		-\int_{D^{s,u}}r\mathscr D_\al J^\al =&\f12\int_{\Sigma_s^u}r(J_{\underline L}+J_L)-\f12\int_{\Sigma_{t_0}^u}r(J_{\underline L}+J_L)+\f12\int_{C_u^s}rJ_L,
		\end{split}
		\end{equation}
		where $J_{\underline L}$ and $J_L$ are contractions of $J$ with respect to $\underline L$ and $L$ respectively, that is, $J_{\underline L}=m_{\al\beta}J^\al \underline L^\beta$ and $J_{L}=m_{\al\beta}J^\al L^\beta$.
	\end{lemma}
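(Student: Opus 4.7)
The statement is a standard divergence identity tailored to the null coordinates $(s,u,\theta)$, so my plan is to carry out the divergence theorem directly in these coordinates and then convert components back to null frame contractions at the end.

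\textbf{Step 1: Volume form and divergence formula in $(s,u,\theta)$.} From \eqref{m}, the metric components in $(s,u,\theta)$ are $m_{su}=m_{us}=-2$, $m_{uu}=2$, $m_{\theta\theta}=r^{2}$, with all other entries zero. Expanding the determinant along the $\theta$-row gives $\det m = r^{2}\bigl(0\cdot 2 - (-2)(-2)\bigr)=-4r^{2}$, so $\sqrt{|\det m|}=2r$. Plugging into the intrinsic divergence formula yields
\begin{equation*}
r\,\mathscr D_\alpha J^\alpha
 = r\cdot\frac{1}{2r}\partial_\alpha\!\bigl(2r J^\alpha\bigr)
 = \partial_\alpha\bigl(r J^\alpha\bigr).
\end{equation*}

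\textbf{Step 2: Apply the fundamental theorem of calculus.} Using the definition of $\int_{D^{s,u}}$ in Definition \ref{2.3} (which is the plain iterated integral in $(s',u',\theta)$ without the $2r$ factor),
\begin{equation*}
\int_{D^{s,u}} r\,\mathscr D_\alpha J^\alpha
 = \int_{t_0}^{s}\!\!\int_{0}^{u}\!\!\int_{\mathbb S^{1}}
 \Bigl[\partial_{s'}(rJ^{s})+\partial_{u'}(rJ^{u})+\partial_{\theta}(r\slashed J^{\theta})\Bigr]\,d\theta\,du'\,ds'.
\end{equation*}
The $\partial_\theta$ term vanishes by periodicity on $\mathbb S^{1}$; the $\partial_{u'}$ term produces the boundary contributions at $u'=u$ and $u'=0$, and by the hypothesis that $J$ vanishes on $C_{0}=\{u'=0\}$ the latter drops out. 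Thus
\begin{equation*}
\int_{D^{s,u}} r\,\mathscr D_\alpha J^\alpha
 = \int_{\Sigma_s^{u}} rJ^{s}-\int_{\Sigma_{t_0}^{u}} rJ^{s}+\int_{C_u^{s}} rJ^{u}.
\end{equation*}

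\textbf{Step 3: Convert components to null-frame contractions.} From \eqref{suth} we read off $\partial_s=L$ and $\partial_u=\underline L-L$, so $J=(J^{s}-J^{u})L+J^{u}\underline L+\slashed J^{\theta}\Omega$. Using Lemma \ref{nullframe}, namely $m(L,L)=m(\underline L,\underline L)=m(L,\Omega)=m(\underline L,\Omega)=0$ and $m(L,\underline L)=-2$, one gets
\begin{equation*}
J_L = -2J^{u},\qquad J_{\underline L}=-2(J^{s}-J^{u}),\qquad\text{hence}\qquad J^{s}=-\tfrac12(J_L+J_{\underline L}),\quad J^{u}=-\tfrac12 J_L.
\end{equation*}
Substituting into the identity from Step 2 and multiplying by $-1$ gives exactly \eqref{divergence}.

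\textbf{Main obstacle.} There is no deep analytic obstacle; the entire content is computational. The only thing that requires some care is bookkeeping the signs and factors of $2$: one must correctly identify the volume density $\sqrt{|\det m|}=2r$ from the somewhat unusual form \eqref{m} (the $du\,du$ piece contributes even though $u$ is a null coordinate), and one must correctly expand $\partial_s,\partial_u$ in the $\{L,\underline L,\Omega\}$ frame to pass from the coordinate components $(J^{s},J^{u})$ to the null contractions $(J_L,J_{\underline L})$. Getting these conversions right is what yields the precise coefficient $\tfrac12$ and the exact combination $J_L+J_{\underline L}$ on $\Sigma_s^{u}$ and $\Sigma_{t_0}^{u}$ versus $J_L$ alone on $C_u^{s}$.
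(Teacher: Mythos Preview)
Your proof is correct and follows essentially the same route as the paper: write the divergence in $(s,u,\theta)$ coordinates using $\sqrt{|\det m|}=2r$, integrate over $D^{s,u}$ to collect the boundary terms (with the $\theta$-term vanishing by periodicity and the $u'=0$ term vanishing by hypothesis), and convert $J^{s},J^{u}$ to $J_L,J_{\underline L}$ via Lemma~\ref{nullframe}. One harmless slip: from \eqref{m} one has $m_{uu}=4$, not $2$, but since $m_{ss}=0$ this does not affect your determinant computation.
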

	
	\begin{proof}
		We can write the divergence $\mathscr D_\al J^\al$ of $J$ under new system $(s,u,\th)$, i.e.,
		\begin{equation}\label{div}
		\mathscr D_\al J^\al=\f{1}{\sqrt{|\det m|}}\big(\p_s(\sqrt{|\det m|}J^s)+\p_u(\sqrt{|\det m|}J^u)+\p_\th(\sqrt{|\det m|}\slashed J^\th)\big),
		\end{equation}
		here $\det m=-4r^2$ by \eqref{m}. In addition, Lemma \ref{nullframe} and \eqref{suth} give that
		\begin{equation}\label{J}
		J^s=-\f12 J_{\underline L}-\f12 J_L,\quad J^u=-\f12J_L\quad \text{and}\ \slashed J^\th=r^{-2}\slashed J_\O
		\end{equation}
		with $\slashed J_\O=m_{\al\beta}J^\al \O^\beta$. Thus, \eqref{divergence} follows from substituting \eqref{J} into \eqref{div} and then integrating $\sqrt{|\det m|}\mathscr D_\al J^\al$ over $D^{s,u}$.
	\end{proof}
	
	Next, we turn to choose suitable vector fields and construct induced energies and fluxed by \eqref{divergence}. Since $\phi$ and its derivatives satisfy the linear equation of the form
	\begin{equation}
	\Box\Psi=\Phi,
	\end{equation}
	where $\Psi$ vanishes on $C_0$, we now focus on deriving the corresponding energies and fluxes with respect to $\Psi$.
	
	\subsection{Preliminary energy estimates}
	The vector fields we will choose are related to the energy-momentum tensor of $\Psi$ which is defined as follows
	\begin{equation}
	Q_{\al\beta}[\Psi]:=(\p_\al\Psi)(\p_\beta\Psi)-\f12m_{\al\beta}\big(-(L\Psi)(\underline L\Psi)+\f{1}{r^2}(\O\Psi)^2\big),
	\end{equation}
	and the vector fields are
	\begin{align}
	J_1&=-\underline u^{2\mu}(m^{\al\beta}Q_{\al\gamma}[\Psi]  L^\gamma\p_\beta),\label{J1}\\
	J_2&=-m^{\al\beta}Q_{\al\gamma}[\Psi]\underline L^\gamma\p_\beta,\label{J2}\\
	J_3&=\f14\underline u^{2\mu-1}\Psi(-\underline L\Psi L-L\Psi\underline L+\f2{r^2}\O\Psi\O)+\f{2\mu-1}{8}\underline u^{2\mu-2}\Psi^2\underline L,\label{J3}
	\end{align}
	where $\mu\in(\f12,1)$ is a fixed constant. 
	\begin{remark}
		$J_1$ and $J_2$ (without $J_3$) are common vector fields which also appeared in \cite{MPY} etc. Different from \eqref{J1}, the authors in \cite{MPY} chose $\mu\in(\f14,\f12)$ since the decay rate of the solution to 3D wave equation is rapid enough. In order to make up for the lack of decay rate in this paper, we are forced to set $\mu\in(\f12,1)$, which however makes it impossible to close the energies due to the present of $(\mu-\f{\underline u}{2r})\f{\underline u^{2\mu-1}}{r^2}(\O\Psi)^2$ in \eqref{DJ12}. To deal with $(\mu-\f{\underline u}{2r})\f{\underline u^{2\mu-1}}{r^2}(\O\Psi)^2$ and get enough decay rate at the same time, we introduce $J_3$, a much more complicated vector field than $J_1$ and $J_2$, which enables us to close the estimates via delicate analysis.
	\end{remark}
	Taking \eqref{J1}-\eqref{J2} to \eqref{divergence} yields
	\begin{align}
	\int_{D^{s,u}}r\mathscr D_\al J_1^\al=&\f12\int_{\Sigma_{s}^u}\underline u^{2\mu}r\big((L\Psi)^2+\f1{r^2}(\O\Psi)^2\big)-\f12\int_{\Sigma_{t_0}^u}\underline u^{2\mu}r\big((L\Psi)^2+\f1{r^2}(\O\Psi)^2\big)\no\\
	&+\f12\int_{C_u^s}r\underline u^{2\mu}(L\Psi)^2,\label{DJ1}\\
	\int_{D^{s,u}}r\mathscr D_\al J_2^\al=&\f12 \int_{\Sigma_{s}^u}r\big((\underline L\Psi)^2+\f1{r^2}(\O\Psi)^2\big)-\f12 \int_{\Sigma_{t_0}^u}r\big((\underline L\Psi)^2+\f1{r^2}(\O\Psi)^2\big)\no\\
	&+\f12\int_{C_u^s}\f1{r}(\O\Psi)^2.\label{DJ2}
	\end{align}
	We note \eqref{divergence} and \eqref{J3} yield the following lemma.
	\begin{lemma}
		For any smooth $\Psi$ vanishing on $C_0$, it holds that
		\begin{equation}\label{DJ3}
		\begin{split}
		\int_{D^{s,u}}r\mathscr D_\al J_3^\al=&\int_{\Sigma_s^u}r\big(-\f12\underline u^{2\mu-1}\Psi L\Psi-\f1{4r}\underline u^{2\mu-1}\Psi^2\big)\\
		&-\int_{\Sigma_{t_0}^u}r\big(-\f12\underline u^{2\mu-1}\Psi L\Psi-\f1{4r}\underline u^{2\mu-1}\Psi^2\big)\\
		&+\int_{C_u^s}r\big(-\f12\underline u^{2\mu-1}\Psi L\Psi-\f1{8r}\underline u^{2\mu-1}\Psi^2\big).
		\end{split}
		\end{equation}
	\end{lemma}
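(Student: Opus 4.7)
My plan is to derive \eqref{DJ3} by first applying Lemma \ref{4.2} directly to $J_3$ (which vanishes on $C_0$ because $\Psi$ does), and then converting the resulting raw surface integrals into the stated form via two integration-by-parts manipulations whose sphere boundary terms on $S_{s,u}$ and $S_{t_0,u}$ cancel one another.

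Concretely, I would start by computing the contractions
\[
J_{3,L}=\tfrac{1}{2}\underline u^{2\mu-1}\Psi L\Psi - \tfrac{2\mu-1}{4}\underline u^{2\mu-2}\Psi^2, \qquad J_{3,\underline L}=\tfrac{1}{2}\underline u^{2\mu-1}\Psi\underline L\Psi,
\]
using Lemma \ref{nullframe}, and feed them into Lemma \ref{4.2}. Since the $\Sigma$-integrals only involve $\Psi\underline L\Psi$ whereas \eqref{DJ3} features $\Psi L\Psi$, I would use the coordinate identity $\underline L = L + \partial_u$ coming from \eqref{suth} to write $\Psi\underline L\Psi = \Psi L\Psi + \tfrac{1}{2}\partial_u(\Psi^2)$, and then integrate the $\partial_u(\Psi^2)$ piece by parts in $u'$ on $\Sigma_s^u$ (respectively $\Sigma_{t_0}^u$), exploiting $\Psi|_{u'=0}=0$ and, along such a slice, $r=s-2u'$, $\underline u = s-u'$ so that $\partial_u(r\underline u^{2\mu-1}) = -2\underline u^{2\mu-1}-(2\mu-1)r\underline u^{2\mu-2}$. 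The key observation is that the $(2\mu-1)r\underline u^{2\mu-2}\Psi^2$ produced here is arranged to exactly annihilate the $-\tfrac{2\mu-1}{4}\underline u^{2\mu-2}\Psi^2$ carried by $J_{3,L}+J_{3,\underline L}$, so the $\Sigma$-contribution reduces to the desired $r[-\tfrac{1}{2}\underline u^{2\mu-1}\Psi L\Psi-\tfrac{1}{4r}\underline u^{2\mu-1}\Psi^2]$ plus a sphere remnant $\mp\tfrac{1}{8}\int_{S}r\underline u^{2\mu-1}\Psi^2$.

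To absorb those sphere remnants and bring the $C_u^s$-integral into the required form, I would perform a second integration by parts, this time in $s'$ along $C_u$ where $L=\partial_s$ and $r=s'-2u$, $\underline u = s'-u$. Writing the coefficient-gap term $\tfrac{1}{4}\int_{C_u^s}r\underline u^{2\mu-1}\Psi L\Psi$ (the difference between the $-\tfrac{1}{4}$ output of Lemma \ref{4.2} and the target $-\tfrac{1}{2}$) as $\tfrac{1}{8}\int_{C_u^s}r\underline u^{2\mu-1}\partial_s(\Psi^2)$ and integrating by parts produces sphere terms $\pm\tfrac{1}{8}\int_{S}r\underline u^{2\mu-1}\Psi^2$ that exactly cancel those from the previous step, together with a body term $-\tfrac{1}{8}\int_{C_u^s}\underline u^{2\mu-1}\Psi^2$ (i.e., the missing $-\tfrac{1}{8r}\underline u^{2\mu-1}\Psi^2$ once multiplied by $r$) and a $-\tfrac{2\mu-1}{8}\int_{C_u^s}r\underline u^{2\mu-2}\Psi^2$ that annihilates the residual $\tfrac{2\mu-1}{8}\int_{C_u^s}r\underline u^{2\mu-2}\Psi^2$ inherited from $-\tfrac{1}{2}rJ_{3,L}$. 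The main obstacle, and indeed the whole point of the calculation, is the bookkeeping: the coefficient $\tfrac{2\mu-1}{8}$ and the vector $\underline L$ appearing in the second piece of $J_3$ are tuned precisely so that every spurious $r\underline u^{2\mu-2}\Psi^2$ body contribution and every sphere boundary term vanishes after these cancellations, leaving exactly \eqref{DJ3}.
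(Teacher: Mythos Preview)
Your proposal is correct and follows essentially the same route as the paper: apply Lemma~\ref{4.2} to $J_3$, integrate by parts in $u'$ on the $\Sigma$-slices (using $\partial_u(r\underline u^{2\mu-1})=-2\underline u^{2\mu-1}-(2\mu-1)r\underline u^{2\mu-2}$ and $\Psi|_{u'=0}=0$), and then handle the resulting sphere terms via a fundamental-theorem-of-calculus computation in $s'$ along $C_u$. The paper phrases the last step as a Newton--Leibniz identity for $\int_{\mathbb S^1}r\underline u^{2\mu-1}\Psi^2\,d\theta$ rather than as an integration by parts of $\tfrac{1}{4}\int_{C_u^s}r\underline u^{2\mu-1}\Psi L\Psi$, but these are the same computation, and your tracking of the cancellations (the $\tfrac{2\mu-1}{8}r\underline u^{2\mu-2}\Psi^2$ terms and the $S_{s,u}$, $S_{t_0,u}$ boundary contributions) is accurate.
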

	\begin{proof}
		It follows from Lemmas \ref{nullframe}, \ref{4.2} and \eqref{J3} that
		\begin{equation}\label{divJ3}
		\begin{split}
		&\int_{D^{s,u}}r\mathscr D_\al J_3^\al\\
		=&\int_{\Sigma_s^u}r\big(-\f12\underline u^{2\mu-1}\Psi L\Psi+\f{2\mu-1}8\underline u^{2\mu-2}\Psi^2-\f14\underline u^{2\mu-1}\Psi\p_u\Psi\big)\\
		&+\int_{\Sigma_{t_0}^u}r\big(-\f12\underline u^{2\mu-1}\Psi L\Psi+\f{2\mu-1}8\underline u^{2\mu-2}\Psi^2-\f14\underline u^{2\mu-1}\Psi\p_u\Psi\big)\\
		&+\int_{C^s_{u}}r\big(-\f14\underline u^{2\mu-1}\Psi L\Psi+\f{2\mu-1}8\underline u^{2\mu-2}\Psi^2\big),
		\end{split}
		\end{equation}
		where $\p_u=\underline L-L$. Since $\underline u=t-u$, $r=t-2u$ and $\Psi$ vanished when $u=0$, then
		\begin{equation}\label{rupsi}
		\begin{split}
		&\int_{\Sigma_s^u}-\f r4\underline u^{2\mu-1}\Psi\p_u\Psi\\
		=&-\f18\int_0^u\int_{\mathbb S^1}\big(\p_u(r\underline u^{2\mu-1}\Psi^2)+(2\mu-1)r\underline u^{2\mu-2}\Psi^2+2\underline u^{2\mu-1}\Psi^2\big)d\th du'\\
		=&-\f18\int_{\mathbb S^1}r\underline u^{2\mu-1}\Psi^2d\th-\f18\int_{\Sigma_s^u}r\underline u^{2\mu-2}\Psi^2(2\mu-1+\f{2\underline u}r),
		\end{split}
		\end{equation}
		and hence,
		\begin{equation}\label{t0}
		\int_{\Sigma_{t_0}^u}-\f {r}4\underline u^{2\mu-1}\Psi\p_u\Psi
		=-\f18\int_{\mathbb S^1}r\underline u^{2\mu-1}\Psi^2d\th-\f18\int_{\Sigma_{t_0}^u}r\underline u^{2\mu-2}\Psi^2(2\mu-1+\f{2\underline u}{r}).
		\end{equation}
		It follows from Newton-Leibnitz formula that
		\begin{equation*}
		\begin{split}
		&\int_{\mathbb S^1}r\underline u^{2\mu-1}\Psi^2d\th\\
		=&\int_{\mathbb S^1}(r\underline u^{2\mu-1}\Psi^2)(t_0,u,\th)d\th+\int_{t_0}^s\big(\f{\p}{\p s'}\int_{\mathbb S^1}(s'-2u)(s'-u)^{2\mu-1}\Psi^2(s',u,\th)d\th\big)ds'\\
		=&\int_{\mathbb S^1}(r\underline u^{2\mu-1}\Psi^2)(t_0,u,\th)d\th+\int_{C_u^s}r\big((2\mu-1+\f{\underline u}r)\underline u^{2\mu-2}\Psi^2+2\underline u^{2\mu-1}\Psi L\Psi\big),
		\end{split}
		\end{equation*}
		which implies
		\begin{equation}\label{ru}
		\begin{split}
		&\int_{\Sigma_s^u}-\f r4\underline u^{2\mu-1}\Psi\p_u\Psi\\
		=&-\f18\int_{\mathbb S^1}(r\underline u^{2\mu-1}\Psi^2)(t_0,u,\th)d\th-\f18\int_{\Sigma_s^u}r\underline u^{2\mu-2}\Psi^2(2\mu-1+\f{2\underline u}r)\\
		&-\f18\int_{C_u^s}r\big((2\mu-1+\f{\underline u}r)\underline u^{2\mu-2}\Psi^2+2\underline u^{2\mu-1}\Psi L\Psi\big)
		\end{split}
		\end{equation}
		by \eqref{rupsi}. Substituting  \eqref{ru} and \eqref{t0} into \eqref{divJ3} yields \eqref{DJ3}.
	\end{proof}
	
	It follows from \eqref{DJ1} and \eqref{DJ3} that
	\begin{equation}\label{D13}
	\begin{split}
	&\int_{D^{s,u}}r\big(\mathscr D_\al J_1^\al-\mathscr D_\al J_3^\al\big)\\
	\gtrsim&\int_{\Sigma_s^u}r\Big(\underline u^{2\mu}(L\Psi)^2+\underline u^{2\mu-2}\Psi^2+\underline u^{2\mu}\f{(\O\Psi)^2}{r^2}\Big)+\int_{C_u^s}r(\underline u^{\mu}L\Psi+\f12\underline u^{\mu-1}\Psi)^2\\
	&-\int_{\Sigma_{t_0}^u}r\Big(\underline u^{2\mu}(L\Psi)^2+\underline u^{2\mu-2}\Psi^2+\underline u^{2\mu}\f{(\O\Psi)^2}{r^2}\Big).
	\end{split}
	\end{equation}
	Therefore, it is natural to define the following energies and fluxes
	\begin{align}
	E_1[\Psi](s,u)=&\int_{\Sigma_s^u}r\Big(\underline u^{2\mu}(L\Psi)^2+\underline u^{2\mu-2}\Psi^2+\underline u^{2\mu}\f{(\O\Psi)^2}{r^2}\Big),\label{E1}\\
	E_2[\Psi](s,u)=&\int_{\Sigma_{s}^u}r\big((\underline L\Psi)^2+\f1{r^2}(\O\Psi)^2\big),\label{E2}\\
	F_1[\Psi](s,u)=&\int_{C_u^s}r(\underline u^{\mu}L\Psi+\f12\underline u^{\mu-1}\Psi)^2,\label{F1}\\
	F_2[\Psi](s,u)=&\int_{C_u^s}\f1{r}(\O\Psi)^2,\label{F2}
	\end{align}
	and \eqref{DJ2} and \eqref{D13} are rewritten as
	\begin{align}
	E_1[\Psi](s,u)+F_1[\Psi](s,u)\lesssim& E_1[\Psi](t_0,u)+\int_{D^{s,u}}r\big(\mathscr D_\al J_1^\al-\mathscr D_\al J_3^\al\big),\label{E1F1}\\
	E_2[\Psi](s,u)+F_2[\Psi](s,u)=&E_2[\Psi](t_0,u)+2\int_{D^{s,u}}r\mathscr D_\al J_2^\al.\label{E2F2}
	\end{align}

	In order to close the above energies, it is natural to estimate the second terms on the right hand sides of \eqref{E1F1} and \eqref{E2F2} respectively, which deduces eventually the following theorem.
	
	\begin{theorem}
		For any smooth function $\Psi$ vanishing on $C_0$,
		\begin{equation}\label{EF}
		\begin{split}
		&E_1[\Psi](s,u)+F_1[\Psi](s,u)+\dl E_2[\Psi](s,u)+\dl F_2[\Psi](s,u)\\
		\lesssim&E_1[\Psi](t_0,u)+\dl E_2[\Psi](t_0,u)+|\int_{D^{s,u}}r\Phi(\underline u^{2\mu}L\Psi+\f12\underline u^{2\mu-1}\Psi)|+\dl|\int_{D^{s,u}}r\Phi\underline L\Psi|.
		\end{split}
		\end{equation}
	\end{theorem}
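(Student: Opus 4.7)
The plan is to add $\dl$ times \eqref{E2F2} to \eqref{E1F1}, yielding the intermediate identity
\begin{equation*}
E_1[\Psi]+F_1[\Psi]+\dl E_2[\Psi]+\dl F_2[\Psi]\lesssim E_1[\Psi](t_0,u)+\dl E_2[\Psi](t_0,u)+\int_{D^{s,u}}r(\mathscr D_\al J_1^\al-\mathscr D_\al J_3^\al)+\dl\int_{D^{s,u}}r\mathscr D_\al J_2^\al,
\end{equation*}
and then to extract from the two bulk integrals the source contributions $r\Phi(\underline u^{2\mu}L\Psi+\frac12\underline u^{2\mu-1}\Psi)$ and $\dl r\Phi\underline L\Psi$, while showing that all other ``deformation'' terms either absorb into the LHS (thanks to the weighted positivity of $E_1,E_2,F_1,F_2$) or are handled by a Gronwall argument in $s$.

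Next I would compute each divergence using the identity $\mathscr D^\al Q_{\al\beta}[\Psi]=\Phi\p_\beta\Psi$ together with the product rule. For $J_1=-\underline u^{2\mu}m^{\al\beta}Q_{\al\g}[\Psi]L^\g\p_\beta$ this produces three classes of terms: (i) the source $-\underline u^{2\mu}\Phi L\Psi$; (ii) a deformation contribution $-\underline u^{2\mu}Q_{\al\beta}\mathscr D^\al L^\beta$ (whose components are of order $1/r$); and (iii) a weight-derivative contribution using $\p_s\underline u=1$, $\p_u\underline u=-1$. Expanding $Q_{\al\beta}$ in the null frame $\{L,\underline L,\O\}$, the bulk produces terms proportional to $(L\Psi)^2$ (good, matching $E_1$), $L\Psi\,\underline L\Psi$ (mixed, manageable after pairing one factor with $\dl E_2$), and $(\O\Psi)^2/r^2$ with coefficient $\underline u^{2\mu-1}(\mu-\underline u/(2r))$. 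Since $\mu\in(2/3,5/6)$, this last coefficient is not signed, which is precisely the obstruction highlighted after \eqref{J3}.

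The purpose of $J_3$ is to cancel exactly this bad contribution. Starting from the already-derived formula \eqref{DJ3} and rewriting $\Psi L\Psi$ via $L\underline L\Psi=\Box\Psi+r^{-2}\O^2\Psi+r^{-1}\underline L\Psi-r^{-1}L\Psi$ (the analogue of \eqref{me} applied to $\Psi$), an $\underline u^{2\mu-1}(\O\Psi)^2/r^2$ factor emerges with coefficient matching the one arising from $\mathscr D J_1$, so their sum telescopes. The price is an additional source $-\frac12\underline u^{2\mu-1}\Psi\Phi$, which combines with $-\underline u^{2\mu}\Phi L\Psi$ to give the advertised combination $-\Phi(\underline u^{2\mu}L\Psi+\frac12\underline u^{2\mu-1}\Psi)$; meanwhile, the remaining cross-products along $C_u^s$ recombine with the $L$-boundary pieces of \eqref{DJ1} and \eqref{DJ3} to produce the perfect square $(\underline u^\mu L\Psi+\frac12\underline u^{\mu-1}\Psi)^2$ that defines $F_1$. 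A parallel, and simpler, computation for $\dl\mathscr D_\al J_2^\al$ produces $-\dl\Phi\underline L\Psi$ plus deformation terms controlled by $\dl r^{-1}\bigl((\underline L\Psi)^2+(\O\Psi)^2/r^2\bigr)$.

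After these cancellations, the remaining bulk integrals are positive quadratics in derivatives of $\Psi$ with weights compatible with either $E_1$ or $\dl E_2$. Using $u\in[0,2\dl]$ and Fubini,
\begin{equation*}
\int_{D^{s,u}}(\text{bulk remainder})\lesssim\int_{t_0}^s\bigl(E_1[\Psi](s',u)+\dl E_2[\Psi](s',u)\bigr)\,ds',
\end{equation*}
so a standard Gronwall argument in $s$ closes the estimate and yields \eqref{EF}. I expect the main obstacle to be the precise algebraic cancellation of the $\underline u^{2\mu-1}(\O\Psi)^2/r^2$ coefficient between $\mathscr D J_1$ and the rewritten form of $\mathscr D J_3$; this is the sole reason $J_3$ was designed with the exact weight $\underline u^{2\mu-1}$ and the prefactors $1/4$ and $(2\mu-1)/8$, and verifying it requires careful bookkeeping in the null frame. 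Once this cancellation is in hand, the rest of the argument is structural and proceeds as outlined above.
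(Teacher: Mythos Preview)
Your overall strategy---combine \eqref{E1F1} with $\dl$ times \eqref{E2F2}, compute the three divergences, extract the source terms, and close by Gronwall---is the right one, but two of the steps are not as you describe and the argument as written does not close.

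First, the mechanism by which $J_3$ tames the $(\O\Psi)^2$ term is simpler than you suggest, and the cancellation is not exact. There is no need to invoke the wave equation $L\underline L\Psi=\Phi+r^{-2}\O^2\Psi+\cdots$: the first piece of $J_3$ is $\f12\underline u^{2\mu-1}\Psi\,m^{\al\beta}\p_\beta\Psi\,\p_\al$, whose divergence directly produces $\f12\underline u^{2\mu-1}\big(-L\Psi\,\underline L\Psi+r^{-2}(\O\Psi)^2\big)$ together with the source $\f12\underline u^{2\mu-1}\Psi\Phi$. Subtracting this from $\mathscr D_\al J_1^\al$ changes the $(\O\Psi)^2$ coefficient from $(\mu-\underline u/(2r))\underline u^{2\mu-1}$ to $(\mu-1-u/(2r))\underline u^{2\mu-1}$, which is \emph{negative} (since $\mu<1$) and may simply be dropped from the upper bound. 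So $J_3$ shifts the sign rather than producing a telescoping identity; the ``precise algebraic cancellation'' you anticipate as the main obstacle is in fact a one--line sign check.

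Second, and more seriously, your Gronwall step as stated does not close. A bound of the form
\[
\int_{D^{s,u}}(\text{bulk remainder})\lesssim\int_{t_0}^s\big(E_1[\Psi](s',u)+\dl E_2[\Psi](s',u)\big)\,ds'
\]
with no decaying weight in $s'$ yields only exponential-in-$s$ control, which is useless here. In the actual argument the remainder splits into two kinds of terms: (i) flux-type pieces $\dl^{-1}\int_0^u F_1[\Psi](s,u')\,du'$ and $\int_0^u F_2[\Psi](s,u')\,du'$, which are absorbed by Gronwall in $u$ over the short interval $[0,2\dl]$; and (ii) energy-type pieces carrying \emph{integrable} weights in $s$ such as $\tau^{-1-\mu}$, $\tau^{2\mu-3}$, $\tau^{-3}$, so that Gronwall in $s$ gives a bounded constant. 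Producing the structure (i) requires rewriting $r\underline u^{2\mu-1}|L\Psi|^2$ as $r(\underline u^{\mu}L\Psi+\f12\underline u^{\mu-1}\Psi)^2$ plus a $\Psi^2$ error, and the resulting term $\int_{D^{s,u}}r\underline u^{2\mu-3}\Psi^2$ needs its own integration-by-parts estimate (cf.\ \eqref{DPsi}) before it can be controlled by $\dl^2\int_{t_0}^s\tau^{2\mu-3}E_2\,d\tau$. Your outline omits both the $u$--Gronwall for the fluxes and this $\Psi^2$ estimate; these are not ``structural'' afterthoughts but the steps that make the inequality close without growth in $s$.
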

	\begin{proof}
		For any vector field $V$,
		\begin{equation}\label{DJ}
		-\mathscr D_\al(m^{\al\beta}Q_{\beta\g}V^\g)=-\Phi(V\Psi)-\f12m^{\al\al'}m^{\beta\beta'}Q_{\al'\beta'}[\Psi]\leftidx^{{(V)}}{\pi}_{\al\beta},
		\end{equation}
		where $\leftidx^{{(V)}}{\pi}_{\al\beta}:=m(\mathscr{D}_{\al}V,\p_{\beta})+m(\mathscr{D}_{\beta}V,\p_{\al})$ is a deformation tensor. Since $m^{\al\beta}=-\f12L^{\al}\underline L^\beta-\f12L^{\beta}\underline L^\al+\f1{r^2}\O^\al\O^\beta$, then
		\begin{equation}\label{mmQ}
		\begin{split}
		m^{\al\al'}m^{\beta\beta'}Q_{\al'\beta'}[\Psi]\leftidx^{{(V)}}{\pi}_{\al\beta}=&\f14Q_{\underline L\underline L}[\Psi]\leftidx^{{(V)}}{\pi}_{LL}+\f12Q_{L\underline L}[\Psi]\leftidx^{{(V)}}{\pi}_{L\underline L}+\f14Q_{LL}[\Psi]\leftidx^{{(V)}}{\pi}_{\underline L\underline L}\\
		&-\f1{r^2}Q_{\underline L\O}[\Psi]\leftidx^{{(V)}}{\pi}_{L\O}-\f1{r^2}Q_{L\O}[\Psi]\leftidx^{{(V)}}{\pi}_{\underline L\O}+\f1{r^4}Q_{\O\O}[\Psi]\leftidx^{{(V)}}{\pi}_{\O\O}
		\end{split}
		\end{equation}
		with $Q_{XY}[\Psi]=Q_{\al\beta}[\Psi]X^\al Y^\beta$ and $\leftidx^{{(V)}}{\pi}_{XY}=\leftidx^{{(V)}}{\pi}_{\al\beta}X^\al Y^\beta$ for any vectorfields $X$ and $Y$. Since
		\begin{align*}
		&\leftidx^{{(\underline u^{2\mu} L)}}{\pi}_{LL}=\leftidx^{{(\underline u^{2\mu} L)}}{\pi}_{\underline L\underline L}=\leftidx^{{(\underline u^{2\mu} L)}}{\pi}_{L\O}=\leftidx^{{(\underline u^{2\mu} L)}}{\pi}_{\underline L\O}=0,\\
		&\leftidx^{{(\underline u^{2\mu} L)}}{\pi}_{L\underline L}=-4\mu\underline u^{2\mu-1},\quad \leftidx^{{(\underline u^{2\mu} L)}}{\pi}_{\O\O}=2r\underline u^{2\mu},\\
		&\leftidx^{{(\underline L)}}{\pi}_{LL}=\leftidx^{{(\underline L)}}{\pi}_{\underline L\underline L}=\leftidx^{{(\underline L)}}{\pi}_{L\underline L}=\leftidx^{{(\underline L)}}{\pi}_{ L\O}=\leftidx^{{(\underline L)}}{\pi}_{\underline L\O}=0,\quad \leftidx^{{(\underline L)}}{\pi}_{\O\O}=-2r,
		\end{align*}
		and
		\begin{align*}
		&Q_{LL}[\Psi]=(L\Psi)^2,\quad Q_{\underline L\underline L}[\Psi]=(\underline L\Psi)^2,\quad Q_{L\underline L}[\Psi]=\f1{r^2}(\O\Psi)^2,\quad Q_{L\O}[\Psi]=(L\Psi)(\O\Psi),\\
		&Q_{\underline L\O}[\Psi]=(\underline L\Psi)(\O\Psi),\quad Q_{\O\O}[\Psi]=\f12(\O\Psi)^2+\f{r^2}2(L\Psi)(\underline L\Psi),
		\end{align*}
		then it follows from \eqref{DJ} and \eqref{mmQ} that
		\begin{align}
		&\mathscr D_\al J_1^\al=-\Phi(\underline u^{2\mu}L\Psi)+(\mu-\f{\underline u}{2r})\f{\underline u^{2\mu-1}}{r^2}(\O\Psi)^2-\f{1}{2r}\underline u^{2\mu}(L\Psi)(\underline L\Psi),\label{DJ12}\\
		&\mathscr D_\al J_2^\al=-\Phi(\underline L\Psi)+\f{1}{2r^3}(\O\Psi)^2+\f{1}{2r}(L\Psi)(\underline L\Psi),\label{DJ22}
		\end{align}
		which implies
		\begin{equation*}
		\begin{split}
		|\int_{D^{s,u}}r\mathscr D_\al J_2^\al|\lesssim|&\int_{D^{s,u}}r\Phi(\underline L\Psi)|+\int_0^u F_2[\Psi](s,u')du'\\
		&+\int_{t_0}^s\tau^{-1-\mu}\big(E_1[\Psi](\tau,u)+E_2[\Psi](\tau,u)\big)d\tau.
		\end{split}
		\end{equation*}
		This, together with \eqref{E2F2} and Gronwall's inequality yields
		\begin{equation}\label{EF2}
		E_2[\Psi](s,u)+F_2[\Psi](s,u)\lesssim|\int_{D^{s,u}}r\Phi(\underline L\Psi)|+ E_2[\Psi](t_0,u)+\int_{t_0}^s\tau^{-1-\mu}E_1[\Psi](\tau,u)d\tau.
		\end{equation}
		In addition, since
		\begin{equation*}
		\mathscr D_\al J_3^\al=\f12\underline u^{2\mu-1}\big(-L\Psi\underline L\Psi+\f1{r^2}(\O\Psi)^2\big)+\f12\underline u^{2\mu-1}\Psi\Phi-\f{2\mu-1}{8r}\underline u^{2\mu-2}\Psi^2
		\end{equation*}
		by \eqref{J3}, then together with \eqref{DJ12} yields
		\begin{equation*}
		\begin{split}
		\mathscr D_\al J_1^\al-\mathscr D_\al J_3^\al=&-\Phi(\underline u^{2\mu}L\Psi+\f12\underline u^{2\mu-1}\Psi)-\f{u}{2r}\underline u^{2\mu-1}L\Psi\underline L\Psi\\
		&+\f{2\mu-1}{8r}\underline u^{2\mu-2}\Psi^2+(\mu-1-\f u{2r})\underline u^{2\mu-1}\f1{r^2}(\O\Psi)^2\\
		\lesssim&-\Phi(\underline u^{2\mu}L\Psi+\f12\underline u^{2\mu-1}\Psi)-\f{u}{2r}\underline u^{2\mu-1}L\Psi\underline L\Psi+\f{2\mu-1}{8r}\underline u^{2\mu-2}\Psi^2.
		\end{split}
		\end{equation*}
		Hence, we have
		\begin{equation}\label{rDJ13}
		\begin{split}
		&\int_{D^{s,u}}r\big(\mathscr D_\al J_1^\al-\mathscr D_\al J_3^\al\big)\\
		\lesssim&|\int_{D^{s,u}}r\Phi(\underline u^{2\mu}L\Psi+\f12\underline u^{2\mu-1}\Psi)|+\int_{D^{s,u}}\big(r\underline u^{2\mu-1}|L\Psi|^2+r\dl^2\underline u^{2\mu-3}|\underline L\Psi|^2+r\underline u^{2\mu-3}\Psi^2\big)\\
		\lesssim&|\int_{D^{s,u}}r\Phi(\underline u^{2\mu}L\Psi+\f12\underline u^{2\mu-1}\Psi)|+\int_{D^{s,u}}r(\underline u^{\mu}L\Psi+\f12\underline u^{\mu-1}\Psi)^2\\
		&+\dl^2\int_{D^{s,u}}r\underline u^{2\mu-3}|\underline L\Psi|^2+\int_{D^{s,u}}r\underline u^{2\mu-3}\Psi^2.
		\end{split}
		\end{equation}
		For the last term in \eqref{rDJ13}, it cannot be absorbed by left side hand of \eqref{E1F1} directly, so one can estimate it separately. As $\Psi$ vanishes on $C_0$, then
		\begin{equation*}
		\begin{split}
		\int_{C_u^s}r\underline u^{2\mu-3}\Psi^2=&\int_{D^{s,u}}r\Big((3-2\mu)\underline u^{2\mu-4}\Psi^2-2\underline u^{2\mu-3}\Psi^2+2\underline u^{2\mu-3}\Psi\p_u\Psi\Big)\\
		\lesssim&\dl^{-1}\int_{D^{s,u}}r\underline u^{2\mu-3}\Psi^2+\dl\int_{D^{s,u}}r\underline u^{2\mu-3}\big((\underline L\Psi)^2+(L\Psi)^2\big),
		\end{split}
		\end{equation*}
		which implies
		\begin{equation*}
		\int_{C_u^s}r\underline u^{2\mu-3}\Psi^2\lesssim\dl\int_{D^{s,u}}r\underline u^{2\mu-3}\big((\underline L\Psi)^2+(L\Psi)^2\big),
		\end{equation*}
		and hence,
		\begin{equation}\label{DPsi}
		\int_{D^{s,u}}r\underline u^{2\mu-3}\Psi^2\lesssim\dl^2\int_{D^{s,u}}r\underline u^{2\mu-3}(\underline L\Psi)^2+\dl^2\int_{D^{s,u}}\underline u^{-3}r(\underline u^\mu L\Psi)^2.
		\end{equation}
		Taking \eqref{DPsi} to \eqref{rDJ13} yields
		\begin{equation}\label{r13}
		\begin{split}
		\int_{D^{s,u}}r\big(\mathscr D_\al J_1^\al-\mathscr D_\al J_3^\al\big)\lesssim&|\int_{D^{s,u}}r\Phi(\underline u^{2\mu}L\Psi+\f12\underline u^{2\mu-1}\Psi)|+\int_0^uF_1[\Psi](s,u')du'\\
		&+\dl^2\int_{t_0}^s\tau^{2\mu-3}E_2[\Psi](\tau,u)d\tau+\dl^2\int_{t_0}^s\tau^{-3}E_1[\Psi](\tau,u)d\tau.
		\end{split}
		\end{equation}
		Hence, it follows from \eqref{E1F1} and \eqref{r13} that
		\begin{equation}\label{EF1}
		\begin{split}
		&E_1[\Psi](s,u)+F_1[\Psi](s,u)\\
		\lesssim& E_1[\Psi](t_0,u)+|\int_{D^{s,u}}r\Phi(\underline u^{2\mu}L\Psi+\f12\underline u^{2\mu-1}\Psi)|+\dl^2\int_{t_0}^s\tau^{2\mu-3}E_2[\Psi](\tau,u)d\tau.
		\end{split}
		\end{equation}
		Combine \eqref{EF1} and \eqref{EF2}, we get \eqref{EF}.
	\end{proof}
	
	The remaining task in this section is to estimate $\int_{D^{s,u}}r\Phi(\underline u^{2\mu}L\Psi+\f12\underline u^{2\mu-1}\Psi)$ in \eqref{EF1} and $\dl\int_{D^{s,u}}r\Phi\underline L\Psi$ which appear in the right hand side of \eqref{EF}. In this paper, $\Psi$ is chosen as $\Psi_k^I:=Z^k\phi^I$ and hence $\Phi=\Phi_k^I=\Box\Psi_k^I$, where $Z^k=Z_1\cdots Z_k$ with $Z_j\in\{\p_\al, S, H_i,\O\}$, $1\leq j\leq k$. The energies and fluxes corresponding to $\phi$ are defined as follows:
	\begin{align}
	E_{i,k}(s, u)&=\sum_{I=1}^N\sum_{\bar Z\in\{S, H_i,\O\}}\delta^{2l}E_i[\p^l\bar Z^{k-l}\phi^I](s, u),\quad i=1,2,\label{ei}\\
	F_{i,k}(s, u)&=\sum_{I=1}^N\sum_{\bar Z\in\{S, H_i,\O\}}\delta^{2l}F_i[\p^l\bar Z^{k-l}\phi^I](s, u),\quad i=1,2,\label{fi}\\
	\mathscr E_{k}(s, u)&=\sum_{0\leq n\leq k}\big(E_{1,n}(s, u)+\dl E_{2,n}(s, u)\big),\label{eil}\\
	\mathscr F_{k}(s, u)&=\sum_{0\leq n\leq k}\big(F_{1,n}(s, u)+\dl F_{2,n}(s, u)\big).\label{fil}
	\end{align}

	We make the following bootstrap assumptions:
	\begin{equation}\label{BA}
	\begin{split}
	&\dl^l\|LZ^k\phi\|_{L^\infty(\Sigma_s^u)}+\dl^l\|\f 1 r\O Z^k\phi\|_{L^\infty(\Sigma_s^u)}\lesssim\dl^{1-\kappa}s^{-3/2}M,\\
	&\dl^l\|\underline LZ^k\phi\|_{L^\infty(\Sigma_s^u)}\lesssim\dl^{-\kappa}s^{-1/2}M,
	\end{split}
	\end{equation}
	where $k\leq N_1$, $N_1$ is a fixed large positive integer, $M$ is some positive constant to be suitably chosen later, $Z\in\{\p,S,H_i,\O\}$, and $l$ is the number of $\p$ included in $Z^k$. With this assumption, we then get the following results.
	\begin{theorem}
		Under the assumption \eqref{BA} with $\dl>0$ small enough, it holds that
		\begin{equation}\label{E1F}
		\mathscr E_{2N_1+1}(s,u)+\mathscr F_{2N_1+1}(s,u)\lesssim\dl^{2-2\kappa}s^{2\varsigma},
		\end{equation}
		where $\varsigma$ is some constant multiple of $\dl^{1-2\kappa}M^2$.
		Moreover, we have
		\begin{equation}\label{LOuL}
		\begin{split}
		&\sum_{k\leq 2N_1+1}\dl^l\|LZ^k\phi\|_{L^2(\Sigma_s^u)}+\sum_{k\leq 2N_1+1}\dl^l\|\f1r\O Z^k\phi\|_{L^2(\Sigma_s^u)}\lesssim\dl^{1-\kappa}s^{\varsigma-\mu},\\
		&\sum_{k\leq 2N_1+1}\dl^l\|\underline LZ^k\phi\|_{L^2(\Sigma_s^u)}\lesssim\dl^{\f12-\kappa}s^{\varsigma}.
		\end{split}
		\end{equation}
	\end{theorem}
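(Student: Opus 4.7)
The strategy is to apply the energy identity \eqref{EF} to $\Psi = \dl^l \p^l \bar Z^{k-l}\phi^I$ for each $k \leq 2N_1+1$, use Theorem \ref{Th2.1} to control the initial data contribution at $s=t_0$, exploit the null condition \eqref{cnull} together with the bootstrap assumption \eqref{BA} to bound the two spacetime integrals on the right-hand side of \eqref{EF}, and finally close by a Gronwall-type inequality in $s$.

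First I would commute $Z^k$ through the system \eqref{semi}. Since $[\Box, Z]=0$ for $Z\in\{\p, H_i, \O\}$ and $[\Box, S]=2\Box$, and since the null condition \eqref{null} is preserved under commutation with all these vector fields, the commuted equations read $\Box\Psi_k^I=\Phi_k^I$, where $\Phi_k^I$ is a finite sum of null forms evaluated on $\p Z^{k_1}\phi$ and $\p Z^{k_2}\phi$ with $k_1+k_2\leq k$, so that $|\Phi_k^I|$ is controlled by \eqref{cnull}. Next, I would establish the initial bound $E_{1,k}(t_0,u)+\dl E_{2,k}(t_0,u)\lesssim \dl^{2-2\kappa}$. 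The slab $\Sigma_{t_0}^u$ has width $u\leq 2\dl$; on it, the estimate \eqref{local1-2} controls $|L Z^k\phi|$ and $|r^{-1}\O Z^k\phi|$ by $\dl^{-l-\kappa}$ while Lemma \ref{lem:local} gives $|\underline L Z^k\phi|\lesssim \dl^{-l-\kappa}$. Multiplying by the width of the strip and the weights $\underline u^{2\mu}\sim 1$, $\dl^{2l}$ then yields the claim, with $\dl E_{2,k}$ contributing the dominant $\dl^{2-2\kappa}$ term.

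The delicate step is bounding the spacetime integrals in \eqref{EF}. Using \eqref{cnull}, each summand of $\Phi_k^I$ contains at least one good-derivative factor $TZ^{k_1}\phi$ or $LZ^{k_1}\phi$, to which \eqref{BA} supplies the extra decay $s^{-3/2}M$. Placing the lower-order factor in $L^\infty$ and pairing the remaining high-order factor with the multiplier $\underline u^{2\mu}L\Psi+\f12\underline u^{2\mu-1}\Psi$ (resp. $\underline L\Psi$) via Cauchy--Schwarz yields
\[
\Bigl|\int_{D^{s,u}}r\Phi_k^I\bigl(\underline u^{2\mu}L\Psi+\f12\underline u^{2\mu-1}\Psi\bigr)\Bigr|+\dl\Bigl|\int_{D^{s,u}}r\Phi_k^I\,\underline L\Psi\Bigr|\lesssim \dl^{1-2\kappa}M^2\int_{t_0}^s \tau^{-1}\bigl(\mathscr E_{k}(\tau,u)+\mathscr F_k(\tau,u)\bigr)\,d\tau,
\]
where the prefactor $\dl$ on the second integral compensates exactly for the $\underline L\Psi$-loss. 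Summing over $k\leq 2N_1+1$, inserting into \eqref{EF}, and applying Gronwall's inequality in $s$ deliver \eqref{E1F} with $\varsigma = C\dl^{1-2\kappa}M^2$.

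Finally, \eqref{LOuL} follows from \eqref{E1F} by unpacking the weights in \eqref{E1}-\eqref{E2}: on $\Sigma_s^u$ one has $r\sim s\sim \underline u$, so $s^{2\mu}\|LZ^k\phi\|_{L^2(\Sigma_s^u)}^2\lesssim \dl^{-2l}E_{1,k}(s,u)\lesssim \dl^{-2l+2-2\kappa}s^{2\varsigma}$, and $\|\underline L Z^k\phi\|_{L^2(\Sigma_s^u)}^2\lesssim \dl^{-2l}E_{2,k}(s,u)\lesssim \dl^{-2l+1-2\kappa}s^{2\varsigma}$; taking square roots yields the stated bounds. The main obstacle will be the bookkeeping in the nonlinear step: one must verify that every commutator arising from $Z^k Q^I$ preserves the leading null structure, that the $\dl^l$ weights in \eqref{ei}-\eqref{fi} thread correctly through the product rule, and that the factor placed in $L^\infty$ at each step always carries the good-derivative decay $s^{-3/2}$ rather than the slow bad-derivative decay $s^{-1/2}$ coming from $\underline L\phi$, so that the spacetime integrand reduces to $\tau^{-1}\mathscr E$ and Gronwall produces the very slow growth $s^{2\varsigma}$ with $\varsigma$ small enough to be absorbed in later bootstrap arguments.
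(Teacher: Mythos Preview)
Your broad strategy matches the paper's: commute, invoke the null structure \eqref{BZp}, put the low-order factor in $L^\infty$ via \eqref{BA}, and close by Gronwall. The initial-data step and the deduction of \eqref{LOuL} from \eqref{E1F} are fine. However, the core nonlinear estimate contains a real gap.

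First, your assertion that ``the factor placed in $L^\infty$ always carries the good-derivative decay $s^{-3/2}$'' is false. In the decomposition \eqref{BZp} the first factor is $\underline L Z^{k_1}\phi$ (or $r^{-1}\O Z^{k_1}\phi$); when $k_1\le N_1$ is the low-order index you are forced to put $\underline L Z^{k_1}\phi$ in $L^\infty$, which only yields $\dl^{-\kappa}s^{-1/2}M$. The compensation does \emph{not} come from the $L^\infty$ factor but from the fact that the high-order $L^2$ factor is then necessarily a \emph{good} derivative, hence controlled by $E_1$ with its extra $\underline u^{2\mu}$ weight.

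Second, and more seriously, your Cauchy--Schwarz scheme (``pairing the high-order factor with the multiplier'') does not produce the displayed bound for the multiplier $\underline u^{2\mu}L\Psi+\tfrac12\underline u^{2\mu-1}\Psi$. If you carry it out you obtain, in the worst case (low factor $=\underline L Z^{k_1}\phi$),
\[
\dl^{-\kappa}M\int_{t_0}^{s}\tau^{-1/2}\,E_{1,\le 2N_1+1}(\tau,u)\,d\tau,
\]
which has neither a small prefactor nor an integrable kernel, so Gronwall blows up. The paper avoids this by a different splitting: it pairs the multiplier with \emph{itself} (after dividing by $\underline u^{\mu}$) to extract the flux,
\[
\Bigl|\int_{D^{s,u}}r\,\Phi_k^I\bigl(\underline u^{2\mu}L\Psi+\tfrac12\underline u^{2\mu-1}\Psi\bigr)\Bigr|
\lesssim \dl^{-1}\!\int_0^{u} F_{1,\le 2N_1+1}(s,u')\,du' \;+\; \dl^{1-2\kappa}M^2\!\int_{t_0}^{s}\tau^{-1}E_{1,\le 2N_1+1}(\tau,u)\,d\tau,
\]
and then applies Gronwall \emph{first in $u$} over the thin strip $[0,2\dl]$ (so that $\dl^{-1}\int_0^u\cdots$ is harmless) before Gronwall in $s$. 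Your displayed inequality replaces this $u$-flux integral by a nonexistent $\int_{t_0}^{s}\tau^{-1}\mathscr F_k\,d\tau$ term; that is not what the estimates give, and the two-step Gronwall (in $u$, then in $s$) is the missing mechanism.
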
	
	\begin{proof}
		Since $\phi^I$ satisfies \eqref{semi}, it follows from \eqref{cnull} that
		\begin{equation}\label{BZp}
		\begin{split}
		\dl^l|\Box Z^k\phi^I|\lesssim\sum_{k_1+k_2\leq k}\big(\dl^{l_1}|\underline LZ^{k_1}\phi|+\dl^{l_1}|\f1r\O Z^{k_1}\phi|\big)\big(\dl^{l_2}|LZ^{k_2}\phi|+\dl^{l_2}|\f1r\O Z^{k_2}\phi|\big),
		\end{split}
		\end{equation}
		where $l_i$ $(i=1,2)$ is the corresponding number of $\p$ in $Z^{k_i}$.
		
		When $k\leq 2N_1+1$ and $k_1\leq k_2$, then $k_1\leq N_1$, with the help of \eqref{BA} yields
		\begin{equation}\label{1leq2}
		\begin{split}
		&\dl\int_{D^{s,u}}r\big(\dl^{l_1}|\underline LZ^{k_1}\phi|+\dl^{l_1}|\f1r\O Z^{k_1}\phi|\big)\big(\dl^{l_2}|LZ^{k_2}\phi|+\dl^{l_2}|\f1r\O Z^{k_2}\phi|\big)
		\dl^l|\underline LZ^k\phi^I|\\
		\lesssim&\dl^{1-2\kappa}M^2\int_{D^{s,u}}\tau^{-\f12-\mu}r\big(\dl^{2l_2}\underline u^{2\mu}|LZ^{k_2}\phi|^2+\dl^{2l_2}\underline u^{2\mu}|\f1r\O Z^{k_2}\phi|^2\big)\\
		&+
		\dl\int_{D^{s,u}}\tau^{-\f12-\mu}r\dl^{2l}|\underline LZ^k\phi^I|^2\\
		\lesssim&\dl^{1-2\kappa}M^2\int_{t_0}^s\tau^{-\f12-\mu}E_{1,\leq 2N_1+1}(\tau,u)d\tau+\dl\int_{t_0}^s\tau^{-\f12-\mu}E_{2,\leq 2N_1+1}(\tau,u)d\tau
		\end{split}
		\end{equation}
		and
		\begin{equation}
		\begin{split}
		&\int_{D^{s,u}}r\big(\dl^{l_1}|\underline LZ^{k_1}\phi|+\dl^{l_1}|\f1r\O Z^{k_1}\phi|\big)\big(\dl^{l_2}|LZ^{k_2}\phi|+\dl^{l_2}|\f1r\O Z^{k_2}\phi|\big)\dl^l|\underline u^{2\mu}LZ^k\phi^I+\f12\underline u^{2\mu-1}Z^k\phi^I|\\
		\lesssim&\dl^{-1}\int_{D^{s,u}}r\dl^{2l}|\underline u^{\mu}LZ^k\phi^I+\f12\underline u^{\mu-1}Z^k\phi^I|^2\\
		&+\dl^{1-2\kappa}M^2\int_{D^{s,u}}\tau^{-1}r\big(\dl^{2l_2}\underline u^{2\mu}|LZ^{k_2}\phi|^2+\dl^{2l_2}\underline u^{2\mu}|\f1r\O Z^{k_2}\phi|^2\big)\\
		\lesssim&\dl^{-1}\int_0^uF_{1,\leq 2N_1+1}(s,u')du'+\dl^{1-2\kappa}M^2\int_{t_0}^s\tau^{-1}E_{1,\leq 2N_1+1}(\tau,u)d\tau.
		\end{split}
		\end{equation}
		
		Similarly, when $k\leq 2N_1+1$ and $k_1> k_2$, one has
		\begin{equation}
		\begin{split}
		&\dl\int_{D^{s,u}}r\big(\dl^{l_1}|\underline LZ^{k_1}\phi|+\dl^{l_1}|\f1r\O Z^{k_1}\phi|\big)\big(\dl^{l_2}|LZ^{k_2}\phi|+\dl^{l_2}|\f1r\O Z^{k_2}\phi|\big)
		\dl^l|\underline LZ^k\phi^I|\\
		\lesssim&\dl^{2-\kappa}M\int_{t_0}^s\tau^{-3/2}E_{2,\leq 2N_1+1}(\tau,u)d\tau
		\end{split}
		\end{equation}
		and
		\begin{equation}\label{1>2}
		\begin{split}
		&\int_{D^{s,u}}r\big(\dl^{l_1}|\underline LZ^{k_1}\phi|+\dl^{l_1}|\f1r\O Z^{k_1}\phi|\big)\big(\dl^{l_2}|LZ^{k_2}\phi|+\dl^{l_2}|\f1r\O Z^{k_2}\phi|\big)\dl^l|\underline u^{2\mu}LZ^k\phi^I+\f12\underline u^{2\mu-1}Z^k\phi^I|\\
		\lesssim&\dl^{-1}\int_0^uF_{1,\leq 2N_1+1}(s,u')du'+\dl^{3-2\kappa}M^2\int_{t_0}^s\tau^{-3+2\mu}E_{2\leq 2N_1+1}(\tau,u)d\tau.
		\end{split}
		\end{equation}
		
		\eqref{E1F} follows from inserting \eqref{1leq2}-\eqref{1>2} into \eqref{EF} and using Gronwall's inequality, which implies \eqref{LOuL} immediately.
	\end{proof}
	
	\begin{remark}\label{rem:weak}
		In this subsection, we are able to control the power of $\dl^{-1}$ to the weighted energies and fluxes (see \eqref{E1F}), but unfortunately an unfavorable growth $s^\varsigma$ occurs in the energies. It is thus not yet possible to close the bootstrap assumption \eqref{BA} since $\|\underline LZ^k\phi\|_{L^2(\Sigma_s^u)}$ increases as time evolves (see \eqref{LOuL}).
	\end{remark}
	
	\subsection{Strong energy estimates}
	
	As we explained in Remark \ref{rem:weak}, \eqref{E1F} is not yet enough for us to close the assumption \eqref{BA} and then obtain the global existence of solution in $A_{4\dl}$. We now aim to get bounds for the energies that do not have any growth in time.
	The strategy is to reduce the power of $\underline u$ in \eqref{J1}, that is, one can introduce another vector field
	\begin{equation}\label{J4}
	J_4=-\underline u^{2\nu}(m^{\al\beta}Q_{\al\g}[\Psi]L^\g\p_\beta)
	\end{equation}
	with $\nu\in(0,\f12)$. Set
	\begin{align*}
	\tilde E_1[\Psi](s,u)=&\int_{\Sigma_s^u}r\underline u^{2\nu}\Big((L\Psi)^2+\f1{r^2}(\O\Psi)^2\Big),\\
	\tilde F_1[\Psi](s,u)=&\int_{C_u^s}r\underline u^{2\nu}(L\Psi)^2,
	\end{align*}
	and
	\begin{align*}
	\t E_{1,k}(s, u)&=\sum_{I=1}^N\sum_{Z}\delta^{2l}\t E_1[Z^{k}\phi^I](s, u),\quad \mathscr {\t E}_{k}(s, u)=\sum_{0\leq n\leq k}\big(\t E_{1,n}(s, u)+\dl E_{2,n}(s, u)\big),\\
	\t F_{1,k}(s, u)&=\sum_{I=1}^N\sum_{Z}\delta^{2l}\t F_1[Z^{k}\phi^I](s, u),\quad \mathscr{\t F}_{k}(s, u)=\sum_{0\leq n\leq k}\big(\t F_{1,n}(s, u)+\dl F_{1,n}(s, u)\big).
	\end{align*}
	Taking \eqref{J4} to \eqref{divergence} yields
	\begin{equation}\label{tE1F1}
	\begin{split}
	&\t E_1[\Psi](s,u)+\t F_1[\Psi](s,u)=\t E_1[\Psi](t_0,u)+2\int_{D^{s,u}}r\mathscr D_\al J_4^\al\\
	=&\t E_1[\Psi](t_0,u)-2\int_{D^{s,u}}r\Phi\underline u^{2\nu}L\Psi-\int_{D^{s,u}}rm^{\al\al'}m^{\beta\beta'}Q_{\al'\beta'}[\Psi]\leftidx^{{(\underline u^{2\nu}L)}}{\pi}_{\al\beta}\\
	=&\t E_1[\Psi](t_0,u)-2\int_{D^{s,u}}r\Phi\underline u^{2\nu}L\Psi+\int_{D^{s,u}}r\big(\underline u^{2\nu-1}(2\nu-\f{\underline u}r)\f{(\O\Psi)^2}{r^2}-\f1r\underline u^{2\nu}(L\Psi)(\underline L\Psi)\big).
	\end{split}
	\end{equation}	
	Since $\nu\in(0,\f12)$ and $\underline ur^{-1}\geq 1$, then one gets
	\begin{equation*}
	\begin{split}
	&\int_{D^{s,u}}r\big(\underline u^{2\nu-1}(2\nu-\f{\underline u}r)\f{(\O\Psi)^2}{r^2}-\f1r\underline u^{2\nu}(L\Psi)(\underline L\Psi)\big)\\
	\lesssim&\dl^{-1}\int_{D^{s,u}}r\underline u^{2\nu}(L\Psi)^2+\dl\int_{D^{s,u}}r^{-1}\underline u^{2\nu}(\underline L\Psi)^2\\
	\lesssim&\dl^{-1}\int_0^u\t F_1[\Psi](s,u')du'+\dl\int_{t_0}^s\tau^{2\nu-2}E_2[\Psi](\tau,u)d\tau,
	\end{split}
	\end{equation*}
	which gives
	\begin{equation}
	\t E_1[\Psi](s,u)+\t F_1[\Psi](s,u)\lesssim\t E_1[\Psi](t_0,u)+|\int_{D^{s,u}}r\Phi\underline u^{2\nu}L\Psi|+\dl\int_{t_0}^s\tau^{2\nu-2}E_2[\Psi](\tau,u)d\tau
	\end{equation}	
	with the help of \eqref{tE1F1}. In addition, the same argument with \eqref{EF2} yields
	\begin{equation}\label{tEF2}
	E_2[\Psi](s,u)+F_2[\Psi](s,u)\lesssim E_2[\Psi](t_0,u)+ |\int_{D^{s,u}}r\Phi(\underline L\Psi)|+ \int_{t_0}^s\tau^{-1-\nu}\t E_1[\Psi](\tau,u)d\tau,
	\end{equation}
	thus,
	\begin{equation}\label{tEF}
	\begin{split}
	&\t E_1[\Psi](s,u)+\t F_1[\Psi](s,u)+\dl E_2[\Psi](s,u)+\dl F_2[\Psi](s,u)\\
	\lesssim&\t E_1[\Psi](t_0,u)+\dl E_2[\Psi](t_0,u)+|\int_{D^{s,u}}r\Phi(\underline u^{2\nu} L\Psi)|+\dl|\int_{D^{s,u}}r\Phi\underline L\Psi|.
	\end{split}
	\end{equation}
	Let $\Psi=Z^k\phi^I$ and $\Phi=\Box Z^k\phi^I$ in \eqref{tEF}, and the following results hold. 
	\begin{theorem}
		Under the assumption \eqref{BA} with $\dl>0$ small enough, it holds that
		\begin{equation}\label{Zkp}
		\dl^l\|Z^k\phi\|_{L^\infty(S_{s,u})}\lesssim\dl^{1-\kappa}s^{-1/2}, \quad k\leq 2N_1.
		\end{equation}
	\end{theorem}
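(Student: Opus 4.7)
The proof has two movements: first deducing closed strong (time-growth-free) energy and flux bounds on $Z^k\phi$ from \eqref{tEF}, and then converting them into the stated pointwise bound via a circle Sobolev embedding.

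For the first movement, I would substitute $\Psi=Z^k\phi^I$ and $\Phi=\Box Z^k\phi^I$ into \eqref{tEF} and repeat the scheme of \eqref{1leq2}--\eqref{1>2}. The null-structure expansion \eqref{BZp} together with the bootstrap \eqref{BA} bounds $\delta^l|\Box Z^k\phi^I|$ by a product of a good factor $(\delta^{l_1}|\underline L Z^{k_1}\phi|+\delta^{l_1}|r^{-1}\Omega Z^{k_1}\phi|)$ (controlled via $E_2$ or $\tilde E_1$) and a factor to which \eqref{BA} can be applied. The key improvement over \eqref{E1F} is that in \eqref{tEF} the weight $\underline u^{2\mu}$ is replaced by $\underline u^{2\nu}$ with $\nu\in(0,\tfrac12)$; since $\underline u/r\ge 1$, the coefficient $(2\nu-\underline u/r)$ appearing in the derivation of \eqref{tE1F1} now has the favorable sign, eliminating the obstruction that forced $\mu>2/3$ and produced the growth $s^{2\varsigma}$. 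Carrying out the Cauchy--Schwarz splits analogous to \eqref{1leq2}--\eqref{1>2}, the spacetime integrals produce factors $\tau^{-1}$, $\tau^{-3/2}$, and $\tau^{2\nu-2}$ that are all integrable, so Gronwall closes without any time growth, yielding
\begin{equation*}
\tilde{\mathscr E}_{2N_1+1}(s,u)+\tilde{\mathscr F}_{2N_1+1}(s,u)\lesssim \delta^{2-2\kappa}.
\end{equation*}

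The second movement is the pointwise estimate. Recall $Z^k\phi$ vanishes on $C_0$ by finite speed of propagation and the support property of the initial data, so along each integral curve of $\partial_u$ at fixed $(s,\theta)$ one has $Z^k\phi(s,u,\theta)=\int_0^u\partial_{u'}Z^k\phi(s,u',\theta)du'$. Since $\partial_u=\underline L-L$ by \eqref{suth}, Cauchy--Schwarz in $u'$ and integration in $\theta$ give
\begin{equation*}
\|Z^k\phi\|_{L^2(S_{s,u})}^2\lesssim u\bigl(\|\underline L Z^k\phi\|_{L^2(\Sigma_s^u)}^2+\|L Z^k\phi\|_{L^2(\Sigma_s^u)}^2\bigr).
\end{equation*}
Combining this with the one-dimensional Sobolev inequality on the circle $S_{s,u}$ of radius $r$,
\begin{equation*}
\|f\|_{L^\infty(S_{s,u})}^2\lesssim r^{-1}\bigl(\|f\|_{L^2(S_{s,u})}^2+\|\Omega f\|_{L^2(S_{s,u})}^2\bigr),
\end{equation*}
applied to $f=Z^k\phi$, produces
\begin{equation*}
\delta^{2l}\|Z^k\phi\|_{L^\infty(S_{s,u})}^2\lesssim \frac{u}{r}\sum_{m\le 1}\delta^{2l}\bigl(\|\underline L\Omega^m Z^k\phi\|_{L^2(\Sigma_s^u)}^2+\|L\Omega^m Z^k\phi\|_{L^2(\Sigma_s^u)}^2\bigr).
\end{equation*}
Using the strong energy bounds from the first movement (which dominate $r\|\underline L\Omega^m Z^k\phi\|_{L^2(\Sigma_s^u)}^2$ by $\delta E_{2,k+1}/\delta\lesssim\delta^{1-2\kappa}$ and $r\underline u^{2\nu}\|L\Omega^m Z^k\phi\|_{L^2(\Sigma_s^u)}^2$ by $\tilde E_{1,k+1}\lesssim\delta^{2-2\kappa}$), and noting $u\le 2\delta$ and $r\sim s$ throughout $A_{4\delta}$, we conclude
\begin{equation*}
\delta^{2l}\|Z^k\phi\|_{L^\infty(S_{s,u})}^2\lesssim \frac{\delta}{s}\cdot\frac{\delta^{1-2\kappa}}{s}\lesssim \delta^{2-2\kappa}s^{-2},
\end{equation*}
which is in fact stronger than the claimed $\delta^{1-\kappa}s^{-1/2}$ and requires only $k+1\le 2N_1+1$, i.e.\ $k\le 2N_1$.

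The main obstacle is not the Sobolev step but the first movement: verifying that the $(2\nu-\underline u/r)$ term can be absorbed and that all the nonlinear contributions generated by \eqref{BZp}, when fed into \eqref{tEF}, yield integrals in $\tau$ whose exponents are strictly less than $-1$, so that Gronwall produces a time-independent bound. The narrow constraint $\nu\in(0,1/2)$ and the careful splitting by which index ($k_1$ or $k_2$) is the large one are both essential for this to close.
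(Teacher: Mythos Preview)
Your second movement is essentially the paper's argument (Poincar\'e in $u$ plus circle Sobolev), though you over-count one factor of $s^{-1}$: the prefactor from the $u$-integration and the $r$ already built into $E_2$ and $\tilde E_1$ together give only $\delta^{2-2\kappa}s^{-1}$, hence $\delta^{1-\kappa}s^{-1/2}$, exactly the stated bound and not the stronger $s^{-1}$ you claim.

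The genuine gap is in your first movement. You assert that replacing $\mu$ by $\nu\in(0,\tfrac12)$ and ``repeating the scheme of \eqref{1leq2}--\eqref{1>2}'' makes all time integrals integrable so that Gronwall closes without growth. This is false for the term $\dl\int_{D^{s,u}} r\,\Phi\,\underline L\Psi$ in the case $k_1\le k_2$ with the bad factor $A=\dl^{l_1}|\underline L Z^{k_1}\phi|$. Applying \eqref{BA} to $A$ gives only $\dl^{-\kappa}M\tau^{-1/2}$, and the remaining product $|LZ^{k_2}\phi|\cdot|\underline L Z^k\phi|$ (both indices high) must be put in $L^2$ using $\tilde E_1$ and $E_2$. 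Since $\tilde E_1$ carries the weight $\underline u^{2\nu}$, the resulting time factor is $\tau^{-1/2-\nu}$, which is \emph{not} integrable for $\nu<\tfrac12$. There is no flux for $\underline L$ on $C_u$, so one cannot trade the $\tau$-integral for a $u$-integral either. Your own list ``$\tau^{-1},\tau^{-3/2},\tau^{2\nu-2}$'' already contains the non-integrable $\tau^{-1}$, so Gronwall would again produce growth, not a uniform bound.

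The paper resolves this exactly by invoking the preliminary estimate \eqref{LOuL}, which comes from the $\underline u^{2\mu}$-weighted energy with $\mu\in(\tfrac23,\tfrac56)$: it supplies $\|LZ^{k_2}\phi\|_{L^2(\Sigma_\tau^u)}\lesssim\dl^{1-\kappa}\tau^{\varsigma-\mu}$ and $\|\underline LZ^k\phi\|_{L^2(\Sigma_\tau^u)}\lesssim\dl^{1/2-\kappa}\tau^{\varsigma}$, so that the dangerous term is bounded directly by $\dl^{5/2-3\kappa}M\int\tau^{-1/2-\mu+2\varsigma}d\tau\lesssim\dl^{2-2\kappa}$, using $\mu>\tfrac12$. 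In other words, the two-step architecture---first the $\mu$-weighted estimate with slight growth, then the $\nu$-weighted estimate that feeds on it---is not optional; your proposal to bypass \eqref{LOuL} does not close.
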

	\begin{proof}
		When $k\leq 2N_1+1$ and $k_1\leq k_2$, then $k_1\leq N_1$, with the help of \eqref{BA} and \eqref{LOuL} yields
		\begin{equation}\label{k1<k2}
		\begin{split}
		&\dl\int_{D^{s,u}}r\big(\dl^{l_1}|\underline LZ^{k_1}\phi|+\dl^{l_1}|\f1r\O Z^{k_1}\phi|\big)\big(\dl^{l_2}|LZ^{k_2}\phi|+\dl^{l_2}|\f1r\O Z^{k_2}\phi|\big)
		\dl^l|\underline LZ^k\phi^I|\\
		\lesssim&\dl^{1-\kappa}M\int_{t_0}^s\tau^{-1/2}\big(\dl^{l_2}\|LZ^{k_2}\phi\|_{L^2(\Sigma_\tau^u)}+\dl^{l_2}\|\f1r\O Z^{k_2}\phi\|_{L^2(\Sigma_\tau^u)}\big)\dl^l\|\underline LZ^k\phi^I\|_{L^2(\Sigma_\tau^u)}\\
		\lesssim&\int_{t_0}^s\dl^{\f52-3\kappa}M\tau^{-\f12-\mu+2\varsigma}d\tau\lesssim\dl^{2-2\kappa}
		\end{split}
		\end{equation}
		and
		\begin{equation}
		\begin{split}
		&\int_{D^{s,u}}r\big(\dl^{l_1}|\underline LZ^{k_1}\phi|+\dl^{l_1}|\f1r\O Z^{k_1}\phi|\big)\big(\dl^{l_2}|LZ^{k_2}\phi|+\dl^{l_2}|\f1r\O Z^{k_2}\phi|\big)\underline u^{2\nu}
		\dl^l|LZ^k\phi^I|\\
		\lesssim&\dl^{-1}\int_0^u\t F_{1,\leq 2N_1+1}(s,u')du'+\int_0^u F_2(s,u')du'.
		\end{split}
		\end{equation}
		
		When $k\leq 2N_1+1$ and $k_1> k_2$, we have
		\begin{equation}
		\begin{split}
		&\dl\int_{D^{s,u}}r\big(\dl^{l_1}|\underline LZ^{k_1}\phi|+\dl^{l_1}|\f1r\O Z^{k_1}\phi|\big)\big(\dl^{l_2}|LZ^{k_2}\phi|+\dl^{l_2}|\f1r\O Z^{k_2}\phi|\big)
		\dl^l|\underline LZ^k\phi^I|\\
		\lesssim&\dl\int_{t_0}^s\tau^{-3/2}E_{2,\leq 2N_1+1}(\tau,u)d\tau
		\end{split}
		\end{equation}
		and
		\begin{equation}\label{k1>k2}
		\begin{split}
		&\int_{D^{s,u}}r\big(\dl^{l_1}|\underline LZ^{k_1}\phi|+\dl^{l_1}|\f1r\O Z^{k_1}\phi|\big)\big(\dl^{l_2}|LZ^{k_2}\phi|+\dl^{l_2}|\f1r\O Z^{k_2}\phi|\big)\underline u^{2\nu}
		\dl^l|LZ^k\phi^I|\\
		\lesssim&\dl^{-1}\int_0^u\t F_{1,\leq 2N_1+1}(s,u')du'+\dl^{3-2\kappa}M^2\int_{t_0}^s\tau^{-3+2\nu}E_{2\leq 2N_1+1}(\tau,u)d\tau.
		\end{split}
		\end{equation}
		Taking \eqref{k1<k2}-\eqref{k1>k2} to \eqref{tEF} yields
		\begin{equation}\label{t}
		\mathscr{\t E}_{2N_1+1}(s,u)+\mathscr{\t F}_{2N_1+1}(s,u)\lesssim\dl^{2-2\kappa}.
		\end{equation}
		
		For any smooth function $f$ vanishing on $C_0$,
		\begin{equation*}
		\begin{split}
		\int_{\mathbb S^1}rf^2d\th=&\int_0^u\int_{\mathbb S^1}\big(2f(\underline Lf-Lf)r-2f^2\big)d\th du'\\
		\lesssim&\dl^{-1}\int_0^u\int_{\mathbb S^1} f^2rd\th du'+\dl\int_{\Sigma_s^u}r\big((\underline Lf)^2+(Lf)^2\big),
		\end{split} 
		\end{equation*}
		that is,
		\begin{equation*}
		\|f\|^2_{L^2(S_{s,u})}=\int_{\mathbb S^1}rf^2d\th\lesssim\dl E_2[f](s,u)+\dl s^{-2\nu}\t E_1[f](s,u).
		\end{equation*}
		Thus, for any $k\leq 2N_1$,
		\begin{equation*}
		\begin{split}
		&\dl^l\|Z^k\phi\|_{L^\infty(S_{s,u})}\lesssim\f{\dl^l}{\sqrt r}\|\O^{\leq 1}Z^k\phi\|_{L^2(S_{s,u})}\\
		\lesssim&s^{-1/2}\dl^{1/2}\big(\sqrt{E_{2,\leq 2N_1+1}(s,u)}+s^{-\nu}\sqrt{\t E_{1,\leq 2N_1+1}(s,u)}\big)\lesssim\dl^{1-\kappa}s^{-1/2}
		\end{split}
		\end{equation*}
		by \eqref{t} and hence \eqref{Zkp} is proved.
	\end{proof}
	
	\subsection{$L^{\infty}$ estimates on $C_\dl$}
	
	Till now, we have closed the bootstrap assumption \eqref{BA} and hence obtained the existence of solution in $A_{4\dl}$. In order to solve the initial boundary value problem in the next section, here we can get improved estimate of $\phi$ on $C_{\dl}$. In fact, it follows from \eqref{me} that
	\begin{equation}\label{LruL}
	L(r^{1/2}\underline L\phi^I)=\f{L\phi^I}{2r^{1/2}}+\f1{r^{3/2}}\O^2\phi^I-r^{1/2}Q^I(\p\phi,\p\phi),
	\end{equation}
	then we have $|L(r^{1/2}\underline L\phi^I)|\lesssim\dl^{1-\kappa}s^{-2}+\dl^{1-\kappa}s^{-3/2}r^{1/2}|\underline L\phi|$ by \eqref{Zkp}. This, together with \eqref{local3-3}, deduces
	\begin{equation*}
	|\underline L\phi|_{C_\dl}\lesssim\dl^{1-\kappa}t^{-1/2}
	\end{equation*} 
	when integrating along integral curves of $L$. We assume that
	\begin{equation}\label{assume}
	|\underline L\bar Z^k\phi|_{C_\dl}\lesssim\dl^{1-\kappa}t^{-1/2},\quad\text{for}\quad\bar Z\in\{S,H_i,\O\}\quad\text{and}\quad k\leq k_0\leq 2N_1-3.
	\end{equation}
	According to \eqref{LruL}, one has
	\begin{equation*}
	L(r^{1/2}\underline L\bar Z^{k_0+1}\phi^I)=\f 1{2r^{1/2}}L\bar Z^{k_0+1}\phi^I+\f1{r^{3/2}}\O^2\bar Z^{k_0+1}\phi^I-\sum_{k_1+k_2\leq k_0+1}C^{k_0}_{k_1,k_2}r^{1/2}\tilde Q^I(\p\bar Z^{k_1}\phi,\p\bar Z^{k_2}\phi),
	\end{equation*}
	where $\tilde Q^I$ are also quadratic forms satisfying null conditions. We use \eqref{local3-3} and \eqref{LruL} to obtain
	$$
	|L\bar Z^{k_0+1}\phi|_{C_\dl}\lesssim\dl^{1-\kappa}t^{-1/2}.
	$$
	Similarly, with an induction argument on the number of $\underline L$, it follows from \eqref{LruL} that on $C_\dl$,
	$$
	|\underline L^m\bar Z^k\phi|\lesssim\dl^{1-\kappa}t^{-1/2},\quad 2m+k\leq 2N_1.
	$$
	Since $\p_i=\f{\o^i}2(\f{S+\o^j H_j}{2\underline u}-\underline L)+\o^i_{\perp}\f{\O}r$ and $\p_t=\f12(\f{S+\o^j H_j}{2\underline u}+\underline L)$, then
	\begin{equation}\label{Zkphi}
	|Z^k\phi|_{C_\dl}\lesssim\dl^{1-\kappa}t^{-1/2},\quad k\leq N_1.
	\end{equation}
	
	{
In addition, it follows from
\begin{equation*}
\begin{split}
&\underline L(\f1 {2r}Z^k\phi^I+LZ^k\phi^I)=\f 1{2r^2}Z^k\phi^I+\f1{2r}LZ^k\phi^I+\f1{r^2}\O^2Z^k\phi^I-Z^k\Box\phi^I-[\Box,Z^k]\phi^I,
\end{split}
\end{equation*}
\eqref{semi} and \eqref{Zkp} that on $D^0=\{(t,x): 0\leq t-|x|\leq 2\dl, t+|x|\geq 2+2\dl\}$,
\begin{equation}\label{LLa}
|\underline L(\f1 {2r}Z^k\phi^I+LZ^k\phi^I)|\lesssim\delta^{1-2\kappa-l}t^{-2},\ k\leq 2N_1-2,
\end{equation}
where $l$ is the number of $\p$ in $Z^k$.
Integrating \eqref{LLa} along integral curves of $\underline L$ in $D^0$ yields
\begin{equation}\label{La}
|\f1 {2r}Z^k\phi^I+LZ^k\phi^I|_{C_{\dl}}\lesssim\delta^{2-2\kappa-l}t^{-2},\ k\leq 2N_1-2.
\end{equation}
In particular, \eqref{La} implies $|L(r^{1/2}\O^k\phi^I)|_{ C_{\dl}}\lesssim\delta^{2-2\kappa}t^{-3/2}$, and then
\begin{equation}\label{OphiI}
|\O^k\phi^I|_{C_{\dl}}\lesssim\delta^{2-2\kappa}t^{-1/2}\ \text{for}\ k\leq 2N_1-2
\end{equation}
thanks to \eqref{local3-2}.
It follows from \eqref{La} and \eqref{OphiI} directly that
\begin{equation}\label{LO}
|L\O^k\phi^I|_{C_{\dl}}\lesssim\delta^{2-2\kappa}t^{-3/2}\ \text{for}\ k\leq 2N_1-2.
\end{equation}	
}
	
	\section{Global existence inside the outermost outgoing cone}\label{sec:interior}
	In this section, we prove the existence of the solution $\phi$ to \eqref{semi} with \eqref{initial} inside $B_{2\dl}:=\{(t,x)|t-|x|\geq 2\dl\}$.  Different from the small value problem inside $B_{2\dl}$, the solution
	$\phi$ to \eqref{semi} with \eqref{initial} in $B_{2\dl}$ remains large here due to its initial data on time $t_0$ (see
	Theorem \ref{Th2.1}). Note that for $\delta>0$ small, the $L^\infty$ norm of $\phi$ and
	its first order derivatives are small on the boundary $C_{\delta}$ of $B_{2\dl}$ (especially, $Z^{\al}\phi$ admits the better smallness
	$O(\dl^{1-\kappa})$ on $ C_{\delta}$, see \eqref{Zkphi}).

	Unlike \cite{Ding6} and \cite{Ding3}, we will not establish energy estimates on the hypersurface $\Sigma_t\cap B_{2\dl}$ since it seems hard to close the estimates there. Inspired by works \cite{Klainerman85, Hormander, LM, DLL}, we apply Klainerman's hyperboloidal method, i.e., we use hyperboloids to foliate the interior region and thus will bound the energy on hyperboloids. This method allows us to take full use of the $(t-|x|)$-decay of the solution, and hence close the energy estimates. But slightly different from previous works, we should construct a modified version of Klainermann-Sobolev inequality, (conformal) energy estimates, etc., which are applicable to our large data problem.

	\begin{figure}[htbp]
		\centering
		\subfigure[]{\includegraphics[width=0.5\linewidth]{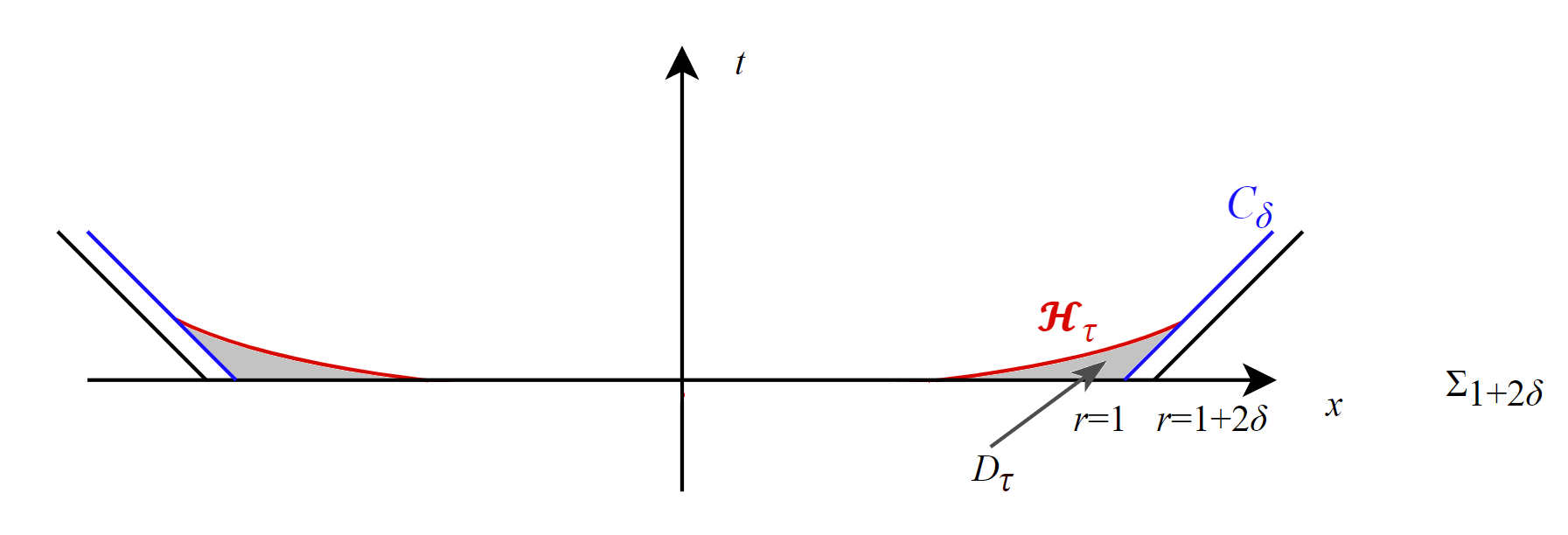}}\subfigure[]{\includegraphics[width=0.5\linewidth]{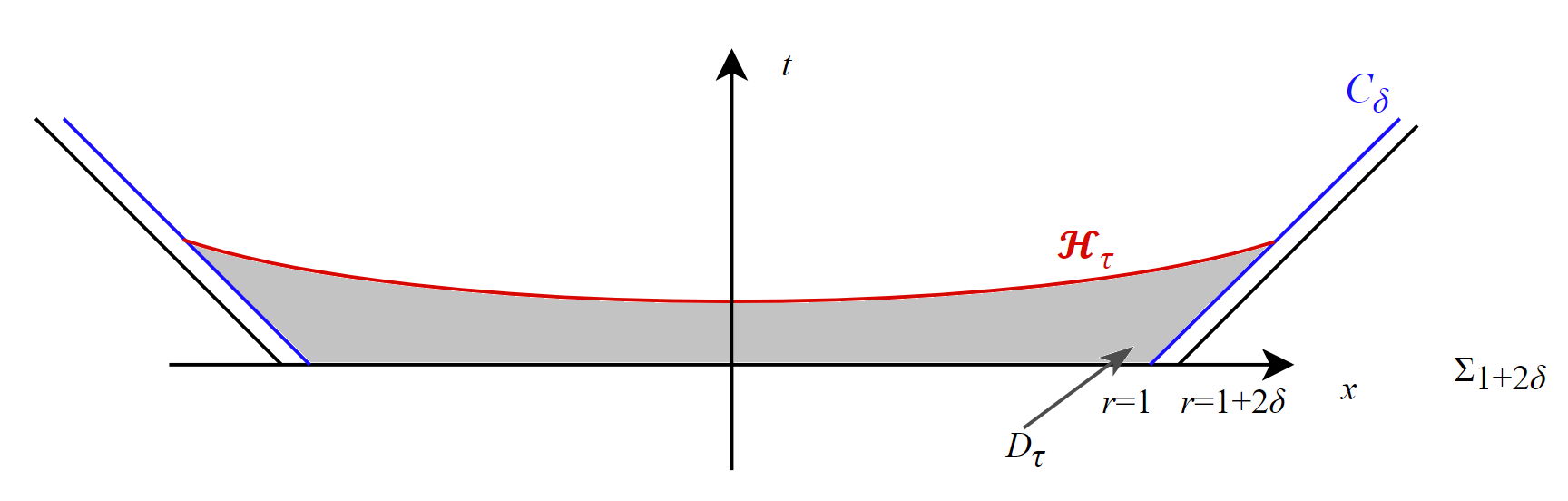}}
		\caption{ The domain $D_\tau$ inside $B_{2\dl}$}\label{pic:04}
	\end{figure}
	
	In this section, we perform the change of variables as follows:
	\begin{equation*}
	\left\{
	\begin{aligned}
	&\tau=\sqrt{(t+1)^2-|x|^2},\\
	&y_i=x_i,\quad i=1,2,
	\end{aligned}
	\right.
	\end{equation*}
	the hyperboloids are defined by
	\begin{equation*}
	\mathcal H_\tau:=\{(t,x)\in B_{2\dl}:(t+1)^2=|x|^2+\tau^2\},\quad \tau\geq \tau_0= \sqrt{3+4\dl^2+8\dl},
	\end{equation*}
	and $D_\tau$ is a subset of $B_{2\dl}$ which is surrounded by $\mathcal H_\tau$, $C_\dl$ and $\Sigma_{1+2\dl}$.
	Then, under new variables $(\tau,y_1,y_2)$,
	\begin{equation*}
	\bar\p_\tau=\bar\p_0=\f{\tau}{t+1}\p_t,\quad\bar\p_i=-\f{x_i}{t(t+1)}\p_t+\f1t H_i,\quad i=1,2.
	\end{equation*}
	
	For any smooth function $f(t,x)$, we define its $L^2$ norm  on $\mathcal H_\tau$ as
	\begin{equation*}
	\|f\|_{L^2(\mathcal H_\tau)}=\sqrt{\int_{\mathcal H_\tau}f^2(t,x)dx}:=\sqrt{\int_{\mathbb R^2}f^2(\sqrt{\tau^2+|y|^2}-1,y)dy},
	\end{equation*}
	and the natural energy and conformal energy  are defined as
	\begin{align}
	E(f,\tau)=&\|\p_{\perp}f\|^2_{L^2(\mathcal H_\tau)}+\sum_{i=1}^2\|\f{\tau}{t+1}\p_i f\|^2_{L^2(\mathcal H_\tau)}+\|\f1{t+1}\O f\|^2_{L^2(\mathcal H_\tau)}\no\\
	=&\|\bar\p_\tau f\|^2_{L^2(\mathcal H_\tau)}+\sum_{i=1}^2\|\bar\p_if\|^2_{L^2(\mathcal H_\tau)},\label{E(f,tau)}\\
	E_{c}(f,\tau)=&\|Kf+f\|^2_{L^2(\mathcal H_\tau)}+\sum_{i=1}^2\|s\bar\p_if\|^2_{L^2(\mathcal H_\tau)},\label{E_c(f,tau)}
	\end{align}
	where $\p_\perp=\p_t+\f{x^i}{t+1}\p_i$ and $K=\tau\bar\p_\tau+2y^i\bar\p_i$.
	
	The energy estimates for wave equations on hyperboloids are now well-known; see for instance \cite[eqn.  (7.6.8)]{Hormander}. But different from the small data and compact supported problems considered before, here we have non-zero  data on the boundary $C_\dl$, so we also provide a detailed  proof for completeness.
	\begin{lemma}\label{energy}
		Suppose that $\vp=\tilde\Gamma^k\O^l\phi^I$ with $\tilde\Gamma\in\{\p_{\al},S,H_1,H_2\}$, then we have the following.
		\begin{itemize}
			\item Natural energy estimates.
			\begin{align}
			\sqrt{E(\vp,\tau)}\lesssim&{\dl^{\f32-\kappa-k}}+\int_{\tau_0}^\tau\|\Box\vp\|_{L^2(\mathcal H_{\tilde \tau})}d{\tilde \tau}.\label{E}
			\end{align}
			
			\item Conformal energy estimates.
			\begin{align}
			\sqrt{E_c(\vp,\tau)}\lesssim&{\dl^{\f32-\kappa-k}\sqrt{\ln \tau}}+\int_{\tau_0}^\tau\tilde \tau\|\Box\vp\|_{L^2(\mathcal H_{\tilde \tau})}d\tilde \tau.\label{Ec}
			\end{align}
			
			\item $L^2$-type estimates.
			\begin{align}
			\|\f{\tau}{1+t}\vp\|_{L^2(\mathcal H_\tau)}\lesssim&{\dl^{\f32-\kappa-k}\sqrt{\ln \tau}}+\int_{\tau_0}^\tau\tilde \tau^{-1}\sqrt{E_c(\vp,\tilde \tau)}d\tilde \tau.\label{vp}
			\end{align}	
		\end{itemize}
		
	\end{lemma}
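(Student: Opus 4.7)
The plan is to prove the three estimates by a multiplier / divergence-theorem argument on the spacetime slab $D_\tau$ bounded above by the hyperboloid $\mathcal H_\tau$, below by $\Sigma_{t_0}\cap\overline{D_\tau}$, and laterally by $C_\dl\cap\overline{D_\tau}$. The main departure from the classical hyperboloidal framework of Hormander is the non-trivial contribution on the lateral boundary $C_\dl$, which must be controlled using the improved $L^\infty$ bound \eqref{Zkphi} established in Section \ref{YY}.

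For \eqref{E}, I would multiply $\Box\vp=\Box\vp$ by $\p_t\vp$, apply $\p_\mu Q^{\mu\nu}[\vp]\p_\nu=\Box\vp\cdot\p_t\vp$, and integrate over $D_\tau$. The boundary integral on $\mathcal H_\tau$ reconstructs $E(\vp,\tau)$ via \eqref{E(f,tau)}, the one on $\Sigma_{t_0}\cap\overline{D_\tau}$ yields the second term on the right-hand side, and the lateral flux on $C_\dl$ takes the form $\int_{C_\dl\cap\overline{D_\tau}}\bigl((L\vp)^2+\f1{r^2}(\O\vp)^2\bigr)dS$. Since $\vp=Z^k\phi^I$ for some $k\le N_1$, the pointwise estimate \eqref{Zkphi} gives $|L\vp|+|\f1r\O\vp|\lesssim\dl^{1-\kappa}t^{-1/2}$ on $C_\dl$, and integrating in $t$ over the cone shows the lateral flux is $O(\dl^{2-2\kappa})$, producing the $\dl^{1-\kappa}$ contribution after taking a square root. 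The spacetime remainder is rewritten by coarea as $\int_{\tau_0}^\tau\langle\Box\vp,\p_t\vp\rangle_{L^2(\mathcal H_{\tilde\tau})}d\tilde\tau$, controlled via Cauchy--Schwarz and Gronwall.

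For \eqref{Ec}, I would use the hyperbolic conformal multiplier $K\vp+\vp$ with $K=\tau\bar\p_\tau+2y^i\bar\p_i$, the analogue on the hyperbolic foliation of the Morawetz vector field. A direct calculation of $\mathscr D_\mu\bigl(Q^{\mu\nu}[\vp](K\vp+\vp)_\nu\bigr)$ produces a divergence identity whose $\mathcal H_\tau$-boundary integral coincides with $E_c(\vp,\tau)$ by \eqref{E_c(f,tau)} and whose bulk term pairs $\Box\vp$ against $K\vp+\vp$. Unlike for the natural energy, the lateral flux is not small enough to be absorbed into $\dl^{1-\kappa}$, so it is kept as the explicit term $\int_{C_\dl\cap\overline{D_\tau}}(K\vp+\vp)^2\,dS$ in the statement. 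The extra weight $\tilde\tau$ in the source term arises since $|K\vp+\vp|$ is controlled by $\tilde\tau\sqrt{E_c(\vp,\tilde\tau)}$ in the appropriate pairing, and Gronwall closes \eqref{Ec}. For \eqref{vp}, I would exploit the identity $\bar\p_{\tilde\tau}\bigl(\f{\tilde\tau}{1+t}\vp\bigr)=\f1{\tilde\tau(1+t)}(K\vp+\vp)+(\text{l.o.t.})$ along the integral curves $y=\text{const}$, integrate from $\tau_0$ to $\tau$, and apply Cauchy--Schwarz with the weight $\tilde\tau^{-1}$. The resulting factor $\bigl(\int_{\tau_0}^\tau\tilde\tau^{-1}d\tilde\tau\bigr)^{1/2}=\sqrt{\ln(\tau/\tau_0)}$ is the origin of the $\sqrt{\ln\tau}$; squaring, integrating over $\mathcal H_\tau$ by change of variables, and invoking \eqref{Ec} for $\|K\vp+\vp\|_{L^2(\mathcal H_{\tilde\tau})}$ yields \eqref{vp}, while the $\tilde\tau=\tau_0$ endpoint of the integration gives the $\Sigma_{t_0}$-data term.

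The main obstacle will be the careful bookkeeping of the boundary contributions on $C_\dl$. Geometrically $C_\dl$ is characteristic for the Minkowski metric, its outward conormal is misaligned with the hyperboloidal Klainerman fields, and signs of the various flux pieces (in particular those involving $\underline L\vp$) must be checked to ensure they either have a favourable sign or can be bounded by \eqref{Zkphi}. The fact that the pointwise bound $|Z^k\phi|_{C_\dl}\lesssim\dl^{1-\kappa}t^{-1/2}$ is integrable enough on $C_\dl$ to yield the $O(\dl^{1-\kappa})$ contribution is precisely the reason why the strong energy estimates of Section \ref{YY} were needed; without them the lateral flux would diverge and the whole hyperboloidal scheme inside $B_{2\dl}$ would fail to close.
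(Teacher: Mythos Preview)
Your overall strategy---multiplier identities integrated over $D_\tau$, with separate treatment of the $\mathcal H_\tau$, $\Sigma_{t_0}$, and $C_\dl$ boundary pieces---is exactly the paper's approach. Two concrete steps in your sketch would fail as written.

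For \eqref{E}, the decay rate you quote on $C_\dl$ is one power of $t$ too weak. With $|L\vp|+|\f1r\O\vp|\lesssim\dl^{1-\kappa}t^{-1/2}$, the lateral flux behaves like $\int^{\tau^2}_{t_0}t\cdot t^{-1}\,dt$, which diverges. The point is that $L=\f{S+\o^iH_i}{t+r}$ and $\f1r\O$ each carry an explicit $t^{-1}$, so \eqref{Zkphi} actually gives $|L\vp|+|\f1r\O\vp|\lesssim\dl^{1-\kappa}t^{-3/2}$ on $C_\dl$. The paper works directly in the $(\tau,y)$ coordinates with multiplier $\bar\p_\tau\vp$ and records this as $|\bar\p_\tau\vp|_{C_\dl}\lesssim\dl^{1-\kappa}\tilde\tau^{-2}$, $|\bar\p_i\vp|_{C_\dl}\lesssim\dl^{1-\kappa}\tilde\tau^{-3}$ (recall $\tilde\tau^2\sim t$ on $C_\dl$), which makes the flux $O(\dl^{2-2\kappa})$.

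For \eqref{vp}, your identification of the source of $\sqrt{\ln\tau}$ is wrong, and your curve-integration scheme has a gap. If the logarithm came from the Cauchy--Schwarz factor $\bigl(\int_{\tau_0}^\tau\tilde\tau^{-1}d\tilde\tau\bigr)^{1/2}$ as you say, it would multiply the $E_c$ integral rather than appear as the isolated term $\dl^{1-\kappa}\sqrt{\ln\tau}$. In the paper the estimate comes from a divergence identity: one writes $\f{2\tau}{t+1}\vp\cdot\f{K\vp+\vp}{t+1}$ as $\bar\p_\tau\bigl(\f{\tau^2}{(t+1)^2}\vp^2\bigr)+\bar\p_i\bigl(\f{y^i\tau}{(t+1)^2}\vp^2\bigr)$ plus a controllable remainder and integrates over $D_\tau$. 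The lateral boundary term on $C_\dl$ is $\int_{C_\dl\cap\overline{D_\tau}}\f{\tilde\tau(1+2\dl)}{(t+1)\sqrt{\tilde\tau^2+(1+2\dl)^2}}\vp^2\,dS$, and with $|\vp|_{C_\dl}^2\lesssim\dl^{2-2\kappa}t^{-1}\sim\dl^{2-2\kappa}\tilde\tau^{-2}$ this integrates to $O(\dl^{2-2\kappa}\ln\tau)$; that is the true origin of the logarithm. Your ODE-along-$\bar\p_\tau$-curves argument also ignores that most such curves in $D_\tau$ enter through $C_\dl$, not $\Sigma_{t_0}$, so the ``$\tilde\tau=\tau_0$ endpoint'' cannot be purely an initial-data term---the $C_\dl$ contribution has to be accounted for, and doing so recovers precisely the $\dl^{1-\kappa}\sqrt{\ln\tau}$.
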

	\begin{proof}
		From the relationship between $(t,x_1,x_2)$ and $(\tau,y_1,y_2)$, one has
		\begin{equation*}
		\bar\p_\al=\bar\Phi_\al^\beta\p_\beta\quad\text{and}\quad\p_\al=\bar\Psi_\al^\beta\bar\p_\beta
		\end{equation*}
		with
		\begin{equation*}
		(\bar\Phi_\al^\beta)_{\al\beta}=\begin{pmatrix}
		\f{\tau}{t+1} & 0 &0\\
		\f{x^1}{t+1} &1 &0\\
		\f{x^2}{t+1} &0 &1
		\end{pmatrix}
		\quad\text{and}\quad
		(\bar\Psi_\al^\beta)_{\al\beta}=
		\begin{pmatrix}
		\f{t+1}\tau &0 &0\\
		-\f{y^1}\tau &1 &0\\
		-\f{y^2}\tau &0 &1
		\end{pmatrix}.
		\end{equation*}
		Thus, we have
		\begin{equation}\label{Box}
		\begin{split}
		\Box=&(m^{\al\beta}\bar\Psi_\al^{\al'}\bar\Psi_\beta^{\beta'})\bar\p_{\al'}\bar\p_{\beta'}+m^{\al\beta}(\p_\al\bar\Psi_\beta^{\beta'})\bar\p_{\beta'}\\
		=&-\bar\p_\tau^2-\f{2y^i}\tau\bar\p_\tau\bar\p_i+\sum_{i=1}^2\bar\p_i^2-\f2\tau\bar\p_\tau.
		\end{split}
		\end{equation}
		
		\textbf{Step 1: Proof of \eqref{E}.}	
		
		We apply \eqref{Box} to deduce
		\begin{equation}\label{mulitiplier}
		-\bar\p_\tau\vp\cdot\Box\vp=\f12\bar\p_\tau\big(|\bar\p_\tau\vp|^2+\sum_{i=1}^2|\bar\p_i\vp|^2\big)+\sum_{i=1}^2\bar\p_i\big(\f{y^i}\tau|\bar\p_\tau\vp|^2-\bar\p_\tau\vp\cdot\bar\p_i\vp\big),
		\end{equation}
		it follows from integrating \eqref{mulitiplier} over $D_\tau$ that
		\begin{equation}\label{Ds}
		\begin{split}
		&-2\int_{D_\tau}\bar\p_\tau\vp\cdot\Box\vp dyd\tilde \tau=E(\vp,\tau)-\int_{\Sigma_{t_0}\cap\overline{D_\tau}}\f{\tilde \tau}{t_0+1}\big(|\p_t\vp|^2+|\nabla\vp|^2\big)dx\\
		&{-\int_{C_\dl\cap\overline{D_\tau}}\f1{\sqrt{\tilde \tau^2+(2\dl+1)^2}}\big(\sum_{i=1}^2\frac{\tilde\tau}{t^2}(2\dl\o^i\p_t\vp+H_i\vp)^2\big)dS}.
		\end{split}
		\end{equation}
		On $C_\dl$, $\tilde \tau^2=(1+2\dl)(2t+1-2\dl)$, then $\tilde \tau\sim \sqrt t$, which means
		\begin{equation*}{
		|\p_t\vp|_{C_{\dl}}\lesssim\dl^{1-\kappa}\tilde \tau^{-1},\qquad
		|H_i\vp|_{C_\dl}\lesssim\left\{
		\begin{aligned}
		&\dl^{2-2\kappa}\tilde \tau^{-1},\qquad k=0,\\
		&\dl^{1-\kappa}\tilde \tau^{-1},\qquad k\geq 1
		\end{aligned}\right.}
\end{equation*}
		with the help of the fact $ H_i=\o^i\big(\f{r-t}{2}\underline L+\f{t+r}{2}L\big)+\f{t\o^i_\perp}{r}\O$, \eqref{Zkphi} and \eqref{OphiI}-\eqref{LO}, and therefore,
		\begin{equation}\label{Cdl}
		\begin{split}
		\int_{C_\dl\cap\overline{D_\tau}}\f1{\sqrt{\tilde \tau^2+(2\dl+1)^2}}\big(\sum_{i=1}^2\frac{\tilde\tau}{t^2}(2\dl\o^i\p_t\vp+H_i\vp)^2\big)dS
		\lesssim{\dl^{4-4\kappa-2k}}.
		\end{split}
		\end{equation}
		Substitute \eqref{Cdl} to \eqref{Ds} and then one obtains
		\begin{equation*}
		\begin{split}
		E(\vp,\tau)\lesssim\int_{\Sigma_{t_0}\cap\overline{D_\tau}}|\p\vp(t_0,x)|^2dx+{\dl^{4-4\kappa-2k}}+\int_{\tau_0}^\tau\sqrt{E(\vp,\tilde \tau)}\|\Box\vp\|_{L^2(\mathcal H_{\tilde \tau})}d\tilde \tau,
		\end{split}
		\end{equation*}
		which proves \eqref{E} by Gronwall's inequality and \eqref{local3-2} since $\kappa\in(0,\f12)$.
		
		\textbf{Step 2: Proof of \eqref{Ec}.}
		
		According to the definition of $K$ in \eqref{E_c(f,tau)} and the identity \eqref{Box}, one has
		\begin{align*}
		-\tau K\vp\cdot\Box\vp=&\f12\bar\p_\tau(|K\vp|^2)-\sum_{i=1}^2\bar\p_i\big(\tau^2(\bar\p_\tau\vp+\f{2y^j}\tau\bar\p_j\vp)\bar\p_i\vp\big)+\f12\sum_{i=1}^2\bar\p_\tau(|\tau\bar\p_i\vp|^2)\\
		&+\tau\big(\bar\p_\tau\vp(\bar\p_\tau\vp+\f{2y^j}\tau\bar\p_j\vp)-\sum_{i=1}^2|\bar\p_i\vp|^2\big)+\sum_{j=1}^2\bar\p_i(\tau y^i|\bar\p_j\vp|^2),\\
		-\tau\vp\Box\vp=&\bar\p_\tau(\tau\vp\bar\p_\tau\vp)+\f12\bar\p_\tau(\vp^2)+\bar\p_\tau(2y^i\vp\bar\p_i\vp)-\sum_{i=1}^2\bar\p_i(\tau\vp\bar\p_i\vp)\\
		&-\tau\big((\bar\p_\tau\vp)^2+\f{2y^i}\tau(\bar\p_\tau\vp)(\bar\p_i\vp)-\sum_{i=1}^2(\bar\p_i\vp)^2\big),
		\end{align*}
		and hence,
		\begin{equation}\label{D}
		\begin{split}
		-2\tau (K\vp+\vp)\Box\vp=&\bar\p_\tau\big((K\vp+\vp)^2+\sum_{i=1}^2|\tau\bar\p_i\vp|^2\big)\\
		&+\sum_{i=1}^2\bar\p_i\big(-2\tau (K\vp+\vp)\bar\p_i\vp+2\tau y^i\sum_{j=1}^2|\bar\p_j\vp|^2\big).
		\end{split}
		\end{equation}
		If follows from integrating \eqref{D} over $D_\tau$ that
		\begin{equation}\label{Ec(vp)}
		\begin{split}
		-2\int_{D_\tau}&\tilde \tau(K\vp+\vp)\Box\vp dyd\tilde \tau=E_c(\vp,\tau)\\
		&{-\int_{C_\dl\cap\overline{D_\tau}}\f{\tilde \tau\sum_{i=1}^2\big(\o^i(K\vp+\vp)+(1+2\dl)\bar\p_i\vp\big)^2}{\sqrt{\tilde \tau^2+(2\dl+1)^2}}dS}\\
		&-\int_{\Sigma_{t_0}\cap\overline{D_\tau}}\f{\tilde \tau\big((K\vp+\vp)^2+\sum_{i=1}^2{(\tilde \tau^2+2r^2)|\bar\p_i\vp|^2}-2(K\vp+\vp) y^i\bar\p_i\vp\big)}{t_0+1}dy
		\end{split}
		\end{equation}
		{It follows from $K\vp=\f{(1+2\dl)^2}{1+t}\p_t\vp+2rL\vp$ that on $C_\dl$,
		\begin{equation*}
		\begin{split}
			\o^i(K\vp+\vp)+(1+2\dl)\bar\p_i\vp=&\o^i(2rL\vp+\vp)+\f{2\dl(1+2\dl)}t\o^i\p_t\vp\\
			&+\f{1+2\dl}t\o^i(-\dl\underline L\vp+\underline uL\vp)+\f{1+2\dl}r\o^i_{\perp}\O\vp
		\end{split}
		\end{equation*}
		which implies
		\begin{equation*}
		\int_{C_\dl\cap\overline{D_\tau}}\f{\tilde \tau\sum_{i=1}^2\big(\o^i(K\vp+\vp)+(1+2\dl)\bar\p_i\vp\big)^2}{\sqrt{\tilde \tau^2+(2\dl+1)^2}}dS
		\lesssim\dl^{4-4\kappa-2k}\ln \tau
		\end{equation*}
		with the help of \eqref{Zkphi} and \eqref{La}-\eqref{LO}.}
		Therefore,
		\begin{equation*}
		\begin{split}
		E_c(\vp,\tau)\lesssim&\dl^{4-4\kappa-2k}\ln \tau+\int_{\Sigma_{t_0}\cap\overline{D_\tau}}(|\p\vp(t_0,x)|^2+|\vp(t_0,x)|^2)dx\\
		&+\int_{\tau_0}^\tau\tilde \tau\sqrt{E_c(\vp,\tilde \tau)}\|\Box\vp\|_{L^2(\mathcal H_{\tilde \tau})}d\tilde \tau
		\end{split}
		\end{equation*}
		by \eqref{Ec(vp)}, and then \eqref{Ec} holds.
		
		\textbf{Step 3: Proof of \eqref{vp}.}	
		
		Finally, we prove \eqref{vp}, which was observed in \cite{M} and our proof also follows the proof there. By the identity
		\begin{equation*}
		\f{2\tau}{t+1}\vp\cdot\f{K\vp+\vp}{t+1}=\bar\p_\tau\big(\f{\tau^2}{(t+1)^2}\vp^2\big)+\bar\p_i\big(\f{y^i\tau }{(t+1)^2}\vp^2\big)+\f{2}{t+1}\vp\big(\f{y^i\tau}{t+1}\bar\p_i\vp\big),
		\end{equation*}
		we have
		\begin{equation*}
		\begin{split}
		&2\int_{D_\tau}\f{\tau}{t+1}\vp\cdot\f{K\vp+\vp}{t+1}dxd\tilde \tau-2\int_{D_\tau}\f{\t\tau}{t+1}\vp\big(\f{y^i}{t+1}\bar\p_i\vp\big)dyd\tilde \tau\\
		=&\|\f{\tau}{t+1}\vp^2\|_{L^2(\mathcal H_\tau)}-\int_{C_\dl\cap\overline{D_\tau}}\f{\tilde \tau(1+2\dl)}{(t+1)\sqrt{\tilde \tau^2+(1+2\dl)^2}}\vp^2dS-\int_{\Sigma_{t_0}\cap\overline{D_\tau}}\f{\tilde \tau}{t_0+1}\vp^2dx,
		\end{split}
		\end{equation*}
		where
		\begin{equation*}
		\int_{C_\dl\cap\overline{D_\tau}}\f{\tilde \tau(1+2\dl)}{(t+1)\sqrt{\tilde \tau^2+(1+2\dl)^2}}\vp^2dS\lesssim{\dl^{4-4\kappa-2k}\ln \tau}.
		\end{equation*}
		Thus,
		\begin{equation*}
		\|\f{\tau}{t+1}\vp\|^2_{L^2(\mathcal H_\tau)}\lesssim\dl^{4-4\kappa-2k}\ln \tau+\int_{\Sigma_{t_0}\cap\overline{D_\tau}}\vp(t_0,x)^2dx+\int_{\tau_0}^\tau\tilde \tau^{-1}\|\f{\tilde \tau}{t+1}\vp\|_{L^2(\mathcal H_{\tilde \tau})}\sqrt{E_c(\vp,\tilde \tau)}d\tilde \tau,
		\end{equation*}
		and \eqref{vp} is proved.
	\end{proof}
	
	Form \eqref{vp}, we know that the norm of $\f{\tau}{1+t}\vp$ can be estimate by the initial data and conformal energy $E_c(\vp,\tau)$. Moreover, $\|\f \tau{t+1}S\vp\|_{L^2(\mathcal H_\tau)}$ and $\|\f \tau{t+1}H_i\vp\|_{L^2(\mathcal H_\tau)}$ are also controlled by $E(\vp,\tau)$ and $E_c(\vp,\tau)$.
	\begin{lemma}
		For any smooth function $\vp$ in $D_\tau$, one has
		\begin{equation}\label{SHvp}
		\|\f \tau{t+1}S\vp\|_{L^2(\mathcal H_\tau)}+\sum_{i=1}^2\|\f \tau{t+1}H_i\vp\|_{L^2(\mathcal H_\tau)}\lesssim\|\f \tau{t+1}\vp\|_{L^2(\mathcal H_\tau)}+\sqrt{E_c(\vp,\tau)}+\sqrt{E(\vp,\tau)}.
		\end{equation}
	\end{lemma}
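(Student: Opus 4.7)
The plan is to rewrite both $S$ and $H_i$ in the $(\tau,y)$ coordinate frame, reduce everything to controlled quantities in the definitions \eqref{E(f,tau)} and \eqref{E_c(f,tau)} of $E(\vp,\tau)$ and $E_c(\vp,\tau)$, and then use the inequalities $\tau\le t+1$ and $|y^i|\le|x|\le t+1$ that hold on $\mathcal H_\tau$.

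First I would convert the basis. From the relations
\[
\p_t=\f{t+1}{\tau}\bar\p_\tau,\qquad \p_i=\bar\p_i-\f{y^i}{\tau}\bar\p_\tau,
\]
a direct substitution, using also $(t+1)^2-|x|^2=\tau^2$ so that $t(t+1)-|y|^2=\tau^2-(t+1)$, yields
\[
S=\tau\bar\p_\tau-\f{t+1}{\tau}\bar\p_\tau+y^i\bar\p_i,\qquad H_i=t\,\bar\p_i+\f{y^i}{\tau}\bar\p_\tau.
\]
Comparing $S$ with $K=\tau\bar\p_\tau+2y^i\bar\p_i$ gives the key identity $S\vp=K\vp-\tfrac{t+1}{\tau}\bar\p_\tau\vp-y^i\bar\p_i\vp$.

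Next I multiply by $\tau/(t+1)$ and insert the auxiliary quantity $\vp$ so that the combination $K\vp+\vp$ appears, arriving at
\[
\f{\tau}{t+1}S\vp=\f{\tau}{t+1}(K\vp+\vp)-\f{\tau}{t+1}\vp-\bar\p_\tau\vp-\f{\tau y^i}{t+1}\bar\p_i\vp,
\]
and similarly
\[
\f{\tau}{t+1}H_i\vp=\f{t\tau}{t+1}\bar\p_i\vp+\f{y^i}{t+1}\bar\p_\tau\vp.
\]
Since on $\mathcal H_\tau$ one has $\tau\le t+1$ and $|y^i|\le t+1$, the multipliers $\tau/(t+1)$, $t/(t+1)$ and $y^i/(t+1)$ are all bounded by $1$. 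Applying the triangle inequality in $L^2(\mathcal H_\tau)$ term by term, $\|\bar\p_\tau\vp\|_{L^2(\mathcal H_\tau)}\le\sqrt{E(\vp,\tau)}$ bounds one piece, $\|\tau\bar\p_i\vp\|_{L^2(\mathcal H_\tau)}\le\sqrt{E_c(\vp,\tau)}$ and $\|K\vp+\vp\|_{L^2(\mathcal H_\tau)}\le\sqrt{E_c(\vp,\tau)}$ bound the remaining ones, while the term $\frac{\tau}{t+1}\vp$ is kept on the right as is. Summing the bounds for $S\vp$ and for each $H_i\vp$ gives \eqref{SHvp}.

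There is no serious obstacle here: the proof is just algebraic once one writes $S$ and $H_i$ in the hyperboloidal frame. The only minor point to watch is the insertion of $\pm\frac{\tau}{t+1}\vp$ so that the combination $K\vp+\vp$ (rather than $K\vp$ alone) matches the definition of the conformal energy $E_c(\vp,\tau)$; without this trick one would be left with the uncontrollable piece $\|K\vp\|_{L^2}$.
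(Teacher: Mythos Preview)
Your proof is correct and follows essentially the same approach as the paper: rewrite $S$ and $H_i$ in the hyperboloidal frame $(\bar\p_\tau,\bar\p_i)$, insert $\pm\vp$ to produce the combination $K\vp+\vp$, and then bound each term using $\tau\le t+1$, $|y^i|\le t+1$ together with the definitions of $E(\vp,\tau)$ and $E_c(\vp,\tau)$. The only cosmetic difference is that the paper expresses the $S$-identity partly through $H_i$ (so the $S$ estimate is reduced to the $H_i$ estimate first), whereas you expand both directly in terms of $\bar\p_\tau,\bar\p_i$; either way the argument is the same elementary algebra.
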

	\begin{proof}
		\eqref{SHvp} follows directly from the identities
		\begin{align*}
		&\f \tau tH_i\vp=\tau\bar\p_i\vp+\f{y^i}{t}\bar\p_\tau\vp,\\
		&\f{\tau}{t+1}S\vp=\f \tau{t+1}(K\vp+\vp)-\f \tau{t+1}\vp-\f{t+1-\tau^2}{t(t+1)}\bar\p_\tau\vp-\f{\tau y^i}{t(t+1)}H_i\vp.
		\end{align*}
	\end{proof}

	We next state the Klainerman-Sobolev inequality on hyperboloids in our setting, in which we distinguish the spacetime regions $\{r\leq t/4 \}$ and $\{r\geq t/4\}$.
	
	\begin{lemma}[Klainerman-Sobolev inequality]\label{KS}
		Suppose $\vp$ is the same function as in Lemma \ref{energy},  $(\bar t,\bar x)\in \mathcal H_{\tau}$, the following inequalities hold:
		\begin{equation}\label{leq}
		\mid \vp(\bar t,\bar x)\mid\lesssim \tau^{-1}\sum_{i=0}^2 \delta^{(i-1)\lambda}\|\f{\tau}{1+t}{\Gamma}^{i}\vp\|_{L^2(\mathcal H_\tau)},\ |\bar x|\leq\f14\bar t,
		\end{equation}
		\begin{equation}\label{geq}
		\mid \vp(\bar t,\bar x)\mid\lesssim \dl^{1-\kappa}\tau^{-1}+\sum_{a\leq 1,b\leq 1}\tau^{-1}
		\|\f \tau{1+t}\Omega^a\vp\|^{1/2}_{L^2(\mathcal H_\tau)}\|\f \tau{1+t}\O^b\Gamma\vp\|^{1/2}_{L^2(\mathcal H_\tau)},\ |\bar x|\geq\f14\bar t,
		\end{equation}
		where ${\Gamma}\in\{\p_t,H_i\}$
		and $\lambda$ is  any nonnegative constant in \eqref{leq}.
	\end{lemma}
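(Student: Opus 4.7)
The plan is to treat the interior region $|\bar x|\leq \bar t/4$ and the exterior region $|\bar x|\geq \bar t/4$ by two rather different Sobolev-type arguments adapted to the hyperboloidal foliation, carefully tracking the weight $\tau/(1+t)$ and the non-zero boundary data on $C_\delta$.

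\textbf{Interior case.} In the region $|\bar x|\leq \bar t/4$ on $\mathcal H_\tau$, the defining identity $(\bar t+1)^2=\tau^2+|\bar x|^2$ forces $\bar t+1\sim\tau$, so that the weight $\tau/(1+t)\sim 1$ throughout a $y$-neighborhood of $\bar x$ of radius $\lesssim \tau$. The plan is to apply the standard $2$D Sobolev embedding $|f(0)|\lesssim \sum_{k=0}^{2}r^{k-1}\|\bar\nabla_y^{k}f\|_{L^2(B_r)}$ on a disc $B(\bar x,r)\subset\mathbb R^2_y$ with $r=\tau\delta^\lambda$ (which remains inside the interior region for $\delta$ small). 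Using the identity $\bar\p_i=H_i/t-x^i\p_t/(t(t+1))$ valid on $\mathcal H_\tau$, one has the pointwise bound $|\bar\p_i\vp|\lesssim \tau^{-1}|\Gamma\vp|$, and inductively $|\bar\nabla_y^k\vp|\lesssim \tau^{-k}|\Gamma^k\vp|$ up to commutator terms contributing only lower-order $\Gamma$-derivatives. Substituting $r=\tau\delta^\lambda$ produces the factor $r^{k-1}\tau^{-k}=\tau^{-1}\delta^{(k-1)\lambda}$ on the $k$-th term of the Sobolev estimate, yielding \eqref{leq}.

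\textbf{Exterior case.} For $|\bar x|\geq \bar t/4$, set $\bar\rho=|\bar x|$, $\bar\o=\bar x/\bar\rho$, and parameterize $y=\rho\o$. The key geometric facts are: on $\mathcal H_\tau$ the radial derivative satisfies $\p_\rho=\o^i\bar\p_i$; in the exterior $1+t\sim \rho$; the ray in direction $\bar\o$ meets $C_\delta$ at a common radius $\rho=\rho^*\lesssim\tau^2$; and on $C_\delta$, \eqref{Zkphi} combined with $t\sim \tau^2$ gives $|\vp||_{C_\delta}\lesssim \delta^{1-\kappa}\tau^{-1}$. Applying the radial FTC to $\|\O^a\vp(\rho,\cdot)\|_{L^2(\mathbb S^1)}^2$ from $\bar\rho$ to $\rho^*$ produces an integral of $|\O^a\vp||\bar\p\O^a\vp|$, which we bound by Cauchy-Schwarz after using $|\bar\p\vp|\lesssim(1+t)^{-1}|\Gamma\vp|$ and the pointwise estimate $(1+t)/(\tau^2\rho)\lesssim \tau^{-2}$ (valid in the exterior thanks to $1+t\sim\rho$ and $\rho\lesssim \tau^2$). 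This extracts the desired $\tau^{-2}$ factor and yields
\[
\|\O^a\vp(\bar t,\bar\rho\,\cdot)\|_{L^2(\mathbb S^1)}^{2}\lesssim (\delta^{1-\kappa}\tau^{-1})^2+\tau^{-2}\Bigl\|\frac{\tau}{1+t}\O^a\vp\Bigr\|_{L^2(\mathcal H_\tau)}\Bigl\|\frac{\tau}{1+t}\O^a\Gamma\vp\Bigr\|_{L^2(\mathcal H_\tau)},
\]
with commutators $[\Gamma,\O]\in\{H_i\}$ absorbed in the right-hand side. Finally the angular Sobolev inequality $|g(\o)|^2\lesssim \|g\|_{L^2(\mathbb S^1)}^{2}+\|g\|_{L^2(\mathbb S^1)}\|\O g\|_{L^2(\mathbb S^1)}$ applied at radius $\bar\rho$, together with the AM-GM bound $\sqrt{AB}\sqrt{CD}\leq \tfrac12(AB+CD)$ on the resulting products, produces \eqref{geq}.

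\textbf{Main obstacle.} The hardest step is the correct placement of the weight $\tau/(1+t)$ in the Cauchy-Schwarz bound for the radial integral in the exterior. The naive splitting that distributes the weights asymmetrically---placing $\tau/(1+t)$ on $|\vp|$ and $(1+t)/\tau$ on $|\bar\p\vp|$ before invoking Cauchy-Schwarz---yields only a $\tau^{-3/2}$ factor, which after angular Sobolev falls short of the $\tau^{-1}$ decay required. Recovering the sharper $\tau^{-2}$ requires combining the two weights symmetrically on the product $|\vp||\Gamma\vp|/(1+t)$ and then invoking the exterior-specific inequality $(1+t)/(\tau^2\rho)\lesssim\tau^{-2}$, which uses in an essential way \emph{both} the geometric relation $1+t\sim\rho$ (valid in the exterior) and the quadratic bound $\rho\leq \rho^*\lesssim \tau^2$ (valid inside $B_{2\delta}$).
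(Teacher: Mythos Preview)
Your proposal is correct and follows essentially the same route as the paper: a rescaled $2$D Sobolev embedding on a ball of radius $\sim\tau\delta^{\lambda}$ in the interior, and a radial Newton--Leibniz argument from $C_\delta$ combined with angular Sobolev on $\mathbb S^1$ in the exterior, together with the identity $\bar\p_i=t^{-1}H_i-x^i\p_t/(t(t+1))$ to convert $\bar\p$-derivatives into $\Gamma$-derivatives with a gain of $t^{-1}\sim\tau^{-1}$.

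Two minor remarks. First, in the interior your assertion that the disc $B(\bar x,\tau\delta^\lambda)$ ``remains inside the interior region for $\delta$ small'' is only literally true when $\lambda>0$; for $\lambda=0$ the ball has radius $\tau$ and may reach $|x|/t>1/4$ (and, for $\tau$ near $\tau_0$, may even exit $B_{2\delta}$). The paper handles this by inserting a cutoff $\chi(|x|/t)$ supported in $\{|x|\leq t/2\}$ before applying Sobolev; the cutoff derivatives contribute only harmless lower-order terms $\lesssim t^{-1}|\Gamma^{\leq 1}\vp|$ that are absorbed in the sum. You should add this cutoff (or restrict to $\lambda>0$, which is all that is actually used downstream). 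Second, in the exterior the paper applies angular Sobolev \emph{before} the radial Cauchy--Schwarz (bounding $|\vp||\Gamma\vp|\lesssim \rho^{-1}\|\O^{\leq 1}\vp\|_{L^2(S^\rho)}\|\O^{\leq 1}\Gamma\vp\|_{L^2(S^\rho)}$ pointwise in $\rho$, then using $1/(t\rho)\lesssim 1/t^2$ since $\rho\geq t/4$), whereas you do radial FTC on $\|\O^a\vp\|_{L^2(\mathbb S^1)}^2$ first and angular Sobolev last. Both orderings work and yield the same bound; your weight computation $(1+t)/(\tau^2\rho)\lesssim\tau^{-2}$ only needs $1+t\sim\rho$ (the bound $\rho\lesssim\tau^2$ you mention is not actually used there).
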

	\begin{proof}
		\textbf{Step 1: Proof of \eqref{leq}.}	The proof in this part follows from \cite[Lem. 7.6.1]{Hormander} and \cite{Ding4}.

		Let $\chi\in C_c^\infty(\mathbb R_+)$ be a nonnegative cut-off function such that $\chi(r)\equiv1$ for $r\in[0,\f14]$ and $\chi(r)\equiv0$ for $r\geq\f12$, set $$w_\tau(x)=\vp(\sqrt{\tau^2+|x|^2}-1,x)\chi(\f{|x|}{\sqrt{\tau^2+|x|^2}-1}).$$
		
		First, for any point $(\bar t,\bar x)\in\mathcal H_\tau$ satisfying $|\bar x|\leq\f14\bar t$, one has $w_\tau(\bar x)=\vp(\bar t,\bar x)$. For any constant $\lambda\geq 0$ and $z\in\mathbb R^2$, denote
		$$
		g_{\tau,\bar t}(z)=w_\tau(\bar x+\bar t\dl^\lambda z),
		$$
		then $g_{\tau,\bar t}(0)=w_\tau(\bar x)=\vp(\bar t,\bar x)$.
		It follows from the Sobolev embedding theorem for $z$ that
		\begin{equation}\label{ins}
		\begin{split}
		|\vp(\bar t,\bar x)|^2=&|g_{\tau,\bar t}(0)|^2\lesssim\sum_{|\al|\leq 2}\int_{|z|\leq\f12}|\p_z^\al
		g_{\tau,\bar t}(z)|^2dz\\
		\lesssim&\int_{\{|x|\leq \f12t\}}|\vp(t,x)|dz+\sum_{i=1}^2\int_{\{|x|\leq\f12t\}\cap\{|z|\leq\f12\}}(\bar t\dl^\lambda)^2|\bar\p_i\vp(t,x)|^2dz\\
		&+\sum_{i,j=1}^2\int_{\{|x|\leq\f12t\}\cap\{|z|\leq\f12\}}(\bar t\dl^\lambda)^4|\bar\p_{ij}^2\vp(t,x)|^2dz\\
		\lesssim&\int_{\{|x|\leq \f12t\}}|\vp(t,x)|^2(\bar t\dl^\lambda)^{-2}dx+\sum_{i=1}^2\int_{\{|x|\leq\f12t\}\cap\{|x-\bar x|\leq\bar t\dl^{\lambda}/2\}}|\bar\p_i\vp(t,x)|^2dx\\
		&+\sum_{i,j=1}^2\int_{\{|x|\leq\f12t\}\cap\{|x-\bar x|\leq\bar t\dl^{\lambda}/2\}}|\bar\p_{ij}^2\vp(t,x)|^2(\bar t\dl^\lambda)^2dx,
		\end{split}
		\end{equation}
		where $x=\bar x+\bar t\dl^\lambda z$ and $t=\sqrt{\tau^2+|x|^2}-1$.
		Note that when $|x|\leq\f12t$ and $|x-\bar x|\leq\bar t\dl^\lambda/2$,
		\begin{equation}\label{tf}
		\begin{split}
		&\bar t\lesssim t,\\
		&|t\bar\p_i\vp|\leq|\p_t\vp|+|H_i\vp|,\\
		&|t^2\bar\p_{ij}^2\vp|\lesssim|\p_t\vp|+|\p_t^2\vp|+\sum_{k=1}^2(|\p_tH_k\vp|+|H_k\vp|)+\sum_{k,l=1}^2|H_kH_l\vp|.
		\end{split}
		\end{equation}
		Therefore, substituting \eqref{tf} into \eqref{ins} yields
		\begin{equation*}
		\begin{split}
		|\vp(\bar t,\bar x)|^2\lesssim\sum_{i=0}^2\bar t^{-2}\dl^{2(i-1)\lambda}\int_{\mathcal H_\tau\cap\{|x|\leq\f12t\}}|\G^i\vp(t,x)|^2dx,
		\end{split}
		\end{equation*}
		then,
		\begin{equation}\label{stvp}
		\begin{split}
		|\f \tau{\bar t+1}\vp(\bar t,\bar x)|^2\lesssim\sum_{i=0}^2\bar t^{-2}\dl^{2(i-1)\lambda}\int_{\mathcal H_\tau\cap\{|x|\leq\f12t\}}|\G^i\big(\f{\tau}{t+1}\vp(t,x)\big)|^2dx.
		\end{split}
		\end{equation}
		Since when $(t,x)\in\mathcal H_\tau$ and $|x|\leq\f12t$, $|x|\lesssim \tau$ and $t+1\lesssim \tau$, then
		\begin{align*}
		&|\G(\f \tau{t+1}\vp(t,x))|\lesssim|\f \tau{1+t}\vp(t,x)|+|\f \tau{t+1}\G\vp(t,x)|,\\
		&|\G^2(\f \tau{t+1}\vp(t,x))|\lesssim|\f \tau{1+t}\vp(t,x)|+|\f \tau{t+1}\G\vp(t,x)|+|\f \tau{t+1}\G^2\vp(t,x)|,
		\end{align*}
		\eqref{leq} follows when taking the above inequalities to \eqref{stvp}.
		
		\textbf{Step 2: Proof of \eqref{leq}.}
		
		For $(\bar t,\bar x)\in\mathcal H_\tau$ satisfying $|\bar x|\geq\f14\bar t$, let $\bar x=|\bar x|\o$ and $B_{\bar t,\bar x}=(\t t,\t x)$ be the point on $C_\dl\cap\mathcal H_\tau$ whose angular component is $\o$, that is, $\t x=|\t x|\o$, $\t t=|\t x|+2\dl$ and $|\t x|=\f{\tau^2-(1+2\dl)^2}{2(1+2\dl)}$. By the Newton-Leibnitz formula, one has
		\begin{equation*}
		\begin{split}
		\vp^2(\sqrt{\tau^2+|\bar x|^2}-1,\bar x)&=\vp^2(B_{\bar t,\bar x})-\int_{|\bar x|}^{|\t x|}\p_\rho\big(\vp^2(\sqrt{\tau^2+\rho^2}-1,\rho\omega)\big)d\rho\\
		&=\vp^2(B_{\bar t,\bar x})-2\int_{|\bar x|}^{|\t x|}[\vp\o^i\bar\p_i\vp](\sqrt{\tau^2+\rho^2}-1,\rho\omega)d\rho.
		\end{split}
		\end{equation*}
		Since
		$$
		|\bar\p_i\vp(t,\rho\omega)|\leq\f1t(|\p_t\vp(t,\rho\omega)|+|H_i\vp(t,\rho\omega)|)
		$$
		with $t=\sqrt{\tau^2+\rho^2}-1$, then it follows from \eqref{Zkphi}, Sobolev embedding theorem on the circle and $\f\rho t\geq\f14$ that
		\begin{equation*}
		\begin{split}
		|\vp(\bar t,\bar x)|^2\lesssim&\dl^{2-2\kappa}\tau^{-2}+\int_{|\bar x|}^{|\t x|}\f1t\big(|\vp|(|\p_t\vp|+\sum_{i=1}^2|H_i\vp|)\big)(\sqrt{\tau^2+\rho^2}-1,\rho\o)d\rho\\
		\lesssim&\dl^{2-2\kappa}\tau^{-2}+\int_{|\bar x|}^{|\t x|}\f1{t\rho}\|\O^{\leq 1}\vp\|_{L^2(S_t^\rho)}\big(\|\O^{\leq 1}\p_t\vp\|_{L^2(S_t^\rho)}+\sum_{i=1}^2\|\O^{\leq 1}H_i\vp\|_{L^2(S_t^\rho)}\big)d\rho\\
		\lesssim&\dl^{2-2\kappa}\tau^{-2}+\|\f1t\O^{\leq 1}\vp\|_{L^2(\mathcal H_\tau)}\big(\|\f1t\O^{\leq 1}\p_t\vp\|_{L^2(\mathcal H_\tau)}+\sum_{i=1}^2\|\f1t\O^{\leq 1}H_i\vp\|_{L^2(\mathcal H_\tau)}\big),
		\end{split}
		\end{equation*}
		where $S_t^\rho$ is a circle with radius $\rho$ and center at $(t,0)$,
		\eqref{geq} is established. Thus Lemma \ref{KS} is verified.
	\end{proof}

	We will apply the energy method to prove the global existence of solution $\phi$ to \eqref{semi} with \eqref{initial} in $B_{2\dl}$. Unlike establishing energy on the hypersurface $\Sigma_t\cap B_{2\dl}$ in \cite{Ding3}, we perform our energy estimate on $\mathcal H_\tau$ since it admits more higher decay rate which could help us to close our bootstrap assumption. Define
	\begin{align}
	E_{k,l}(\tau)=&\sum_{\t\Gamma\in\{\p,H_i,S\}}\sum_{1\leq I\leq N}E(\tilde\Gamma^k\O^l\phi^I,\tau),\\
	E_{k,l}^c(\tau)=&\sum_{\t\Gamma\in\{\p,H_i,S\}}\sum_{1\leq I\leq N}E_c(\tilde\Gamma^k\O^l\phi^I,\tau),
	\end{align}
	based on \eqref{local3-2}, one can make the following bootstrap assumption:
	For $\tau\ge \tau_0$, there exists a uniform constant $M_0$ such that
	\begin{equation}\label{EA}
	\begin{split}
	E_{k,l}(\tau)\leq {M_0}^2\delta^{2a_k},\quad\text{and}\quad E^c_{k,l}(\tau)\leq {M_0}^2\delta^{2a_k}{\tau^{2\eta}},\quad k+l\leq 4
	\end{split}
	\end{equation}
	with {$a_k=\f32-\kappa-k$ and $\eta$ is a fixed number in $(0,\f1{10})$}.

	\begin{proposition}\label{P4.1}
		Under the assumptions \eqref{EA}, when $\delta>0$ is small and $k+l\leq 4$, it holds that
		\begin{equation}
		\begin{split}
		&\|\f \tau{1+t}\p\tilde\Gamma^k\O^l\phi^I\|_{L^2(\mathcal H_\tau)}\lesssim M_0\dl^{a_k},\\
		&\|\f{\tau}{1+t}\O\tilde\Gamma^k\O^l\phi^I\|_{L^2(\mathcal H_\tau)}+\|\f{\tau}{1+t}S\tilde\Gamma^k\O^l\phi^I\|_{L^2(\mathcal H_\tau)}+\sum_{i=1}^2\|\f{\tau}{1+t}H_i\tilde\Gamma^k\O^l\phi^I\|_{L^2(\mathcal H_\tau)}\lesssim  M_0\dl^{a_k}{\tau^{\eta}},
		\end{split}
		\end{equation}
		and when $k+l\leq 2$,
		\begin{equation}\label{tZ}
		\begin{split}
		&||\p\tilde\Gamma^k\O^l\phi^I||_{L^{\infty}(\mathcal H_\tau)}\lesssim M_0\dl^{\al_k}\tau^{-1},\\
		&||\O\tilde\Gamma^k\O^l\phi^I||_{L^{\infty}(\mathcal H_\tau)}+\sum_{i=1}^2||H_i\tilde\Gamma^k\O^l\phi^I||_{L^{\infty}(\mathcal H_\tau)}+||S\tilde\Gamma^k\O^l\phi^I||_{L^{\infty}(\mathcal H_\tau)}\lesssim M_0\dl^{\al_k}{\tau^{-1+\eta}},
		\end{split}
		\end{equation}
		where {$\al_k=\f12-\kappa-k$}.
	\end{proposition}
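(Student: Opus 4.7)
The plan splits the proposition into four pieces and uses the energy machinery of Lemma~\ref{energy}, \eqref{SHvp}, and Lemma~\ref{KS}. The first $L^2$ bound $\|\f{\tau}{1+t}\p\tilde\Gamma^k\O^l\phi^I\|_{L^2(\mathcal H_\tau)}\lesssim M_0\dl^{a_k}$ is immediate from the definition of the natural energy in \eqref{E(f,tau)}: combining $\|\p_\perp f\|_{L^2}$ with $\|\f{\tau}{t+1}\p_i f\|_{L^2}$ and the identity $\p_t=\p_\perp-\f{x^i}{t+1}\p_i$ (with $|x^i|/(t+1)\leq 1$ on $\mathcal H_\tau$) shows $\|\f{\tau}{t+1}\p f\|_{L^2}^2\lesssim E(f,\tau)$, and the bootstrap \eqref{EA} closes the bound.

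To handle the $L^2$ bounds on $S\varphi,H_i\varphi,\O\varphi$ I first produce $\|\f{\tau}{1+t}\varphi\|_{L^2(\mathcal H_\tau)}\lesssim M_0\dl^{a_k}\tau^{5/6}$ by inserting the conformal bootstrap $E^c_{k,l}(\tilde\tau)\leq M_0^2\dl^{2a_k}\tilde\tau^{5/3}$ into \eqref{vp}: the integral $\int_{\tau_0}^\tau\tilde\tau^{-1}\sqrt{E_c(\varphi,\tilde\tau)}\,d\tilde\tau$ produces the factor $\tau^{5/6}$, while the data term at $t_0$ is under control via Theorem~\ref{Th2.1}. Plugging this into \eqref{SHvp} (whose three right-hand side terms $\|\f{\tau}{1+t}\varphi\|_{L^2}$, $\sqrt{E_c(\varphi,\tau)}$, $\sqrt{E(\varphi,\tau)}$ are all $\lesssim M_0\dl^{a_k}\tau^{5/6}$) yields the claimed bound for $S\varphi,H_i\varphi$. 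For $\O\varphi$ one uses the identity $\O=(x^1H_2-x^2H_1)/t$ to reduce to the Lorentz-boost case.

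The $L^\infty$ bounds come from Lemma~\ref{KS}, treating the interior $\{|\bar x|\leq\bar t/4\}$ and exterior $\{|\bar x|\geq\bar t/4\}$ separately. In the interior, \eqref{leq} requires estimates on $\|\f{\tau}{1+t}\Gamma^i\varphi\|_{L^2}$ for $i=0,1,2$ with $\Gamma\in\{\p_t,H_i\}$; the $i=0,1$ contributions are covered by parts (a) and (b), while for $i=2$ one peels off an outer $\Gamma$ and applies \eqref{SHvp} or the natural-energy bound to the remaining order-$(k+2)$ object (commuting $\Gamma$ and $\p$ as needed so that the inner object falls inside the energy class $E_{k+2,l}$ or $E^c_{k+2,l}$). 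The parameter $\lambda$ in \eqref{leq} is then chosen to optimize the three $\dl$-powers $\{a_k-\lambda,\,a_{k+1},\,a_{k+2}+\lambda\}$ and produce $\dl^{\alpha_k}$. In the exterior the Sobolev-on-circle based estimate \eqref{geq} already carries a clean $\tau^{-1}$ prefactor, and the two $L^2$ pieces (with at most one extra $\O$ and one extra $\Gamma$) are controlled directly by parts (a) and (b).

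The main obstacle is the interplay between the conformal-energy-driven $\tau^{5/6}$ factor appearing in higher-order $L^2$ bounds and the sharp $\tau^{-1}$ decay claimed for $\p\tilde\Gamma^k\O^l\phi^I$. The key observation is that the leading $i=0$ Klainerman--Sobolev term for $\p\varphi$ uses the natural energy only, which carries no $\tau$-growth and contributes $\tau^{-1}\dl^{a_k-\lambda}$, while the subleading $i=1,2$ terms come with stronger $\dl$-smallness (from $a_{k+1},a_{k+2}$); with an optimal $\lambda\in(0,1)$ the specific exponents $\alpha_0=\f{7}{32}-\kappa$, $\alpha_1=-\f{9}{16}-\kappa$, $\alpha_2=-\f{49}{32}-\kappa$ are precisely what makes all three contributions fit under $M_0\dl^{\alpha_k}\tau^{-1}$ for the $\p$-derivative, while the $S,H_i,\O$ derivatives inherit the $\tau^{5/6}$ growth from \eqref{SHvp} and hence only achieve $M_0\dl^{\alpha_k}\tau^{-1/6}$ after dividing by $\tau$. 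Closing this optimization uniformly in $\tau$ is the technical heart of the argument.
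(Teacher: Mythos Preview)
Your proposal is correct and follows essentially the same approach as the paper's proof: the $L^2$ bounds come directly from the natural energy \eqref{E(f,tau)} and from \eqref{vp}+\eqref{SHvp} (with $|\O f|\lesssim\sum_i|H_if|$), while the $L^\infty$ bounds use Lemma~\ref{KS} split into the regions $|\bar x|\lessgtr\bar t/4$, with $\lambda_k=(a_k-a_{k+2})/2$ (namely $\lambda_0=\tfrac{25}{32}$, $\lambda_1=\tfrac{15}{16}$, $\lambda_2=\tfrac{31}{32}$) giving exactly the stated $\alpha_k$. The one point worth making explicit is that for the $\p$-derivative all three interior Klainerman--Sobolev terms $\|\tfrac{\tau}{1+t}\Gamma^i\p\tilde\Gamma^k\O^l\phi\|_{L^2}$ ($i=0,1,2$) must be handled by first commuting $\Gamma^i$ past $\p$ and then invoking the natural energy $E_{k+i,l}$---never \eqref{SHvp}---so that none of them carries a $\tau^{5/6}$ factor; your last paragraph shows you have this in mind, but the middle paragraph's phrase ``covered by parts (a) and (b)'' should be read strictly as ``(a) for the $\p$-case, (b) for the $S,H_i,\O$-case.''
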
	
	\begin{proof}
		According to the definition of $E(f,\tau)$ in \eqref{E(f,tau)}, one has
		\begin{equation}\label{pGO}
		\|\f \tau{1+t}\p\tilde\Gamma^k\O^l\phi^I\|_{L^2(\mathcal H_\tau)}\lesssim\sqrt{E(\tilde\Gamma^k\O^l\phi^I,\tau)}\lesssim M_0\dl^{a_k},\quad k+l\leq 4.
		\end{equation}
		And it follows from \eqref{vp} that when $k+l\leq 4$,
		\begin{equation*}
		\begin{split}
		\|\f \tau{t+1}\tilde\Gamma^k\O^l\phi^I\|_{L^2(\mathcal H_\tau)}\lesssim&\dl^{\f32-\kappa-k}\sqrt{\ln \tau}+\int_{\tau_0}^\tau\tilde \tau^{-1}\sqrt{E^c_{k,l}(\tilde \tau)}d\tilde \tau\\
		\lesssim&M_0\dl^{a_k}\tau^{\eta},
		\end{split}
		\end{equation*}
		this, together with \eqref{SHvp} and the fact $|\O f|\lesssim\sum_{i=1}^2|H_i f|$  in $D_\tau$ for any smooth function $f$, one has
		\begin{equation}\label{GGO}
		\begin{split}
		&\|\f{\tau}{1+t}\O\tilde\Gamma^k\O^l\phi^I\|_{L^2(\mathcal H_\tau)}+\|\f{\tau}{1+t}S\tilde\Gamma^k\O^l\phi^I\|_{L^2(\mathcal H_\tau)}+\sum_{i=1}^2\|\f{\tau}{1+t}H_i\tilde\Gamma^k\O^l\phi^I\|_{L^2(\mathcal H_\tau)}\\
		\lesssim&\|\f \tau{1+t}\tilde\Gamma^k\O^l\phi^I\|_{L^2(\mathcal H_\tau)}+\sqrt{E^c_{k,l}(\tau)}+\sqrt{E_{k,l}(\tau)}\\
		\lesssim& M_0\dl^{a_k}\tau^{\eta}
		\end{split}
		\end{equation}
		when $k+l\leq 4$.
		
		For any point $(\bar t,\bar x)\in\mathcal H_\tau$ satisfying $|\bar x|\leq\f {\bar t}4$ and $k+l\leq 2$, one gets from \eqref{leq} and \eqref{pGO} that
		\begin{equation}\label{Y-30}
		\begin{split}
		|\p\t\G^k\Omega^{l}\phi^I(\bar t,\bar x)|
		\lesssim& \tau^{-1}\sum_{i=0}^2\dl^{(i-1)\lambda_k}\|\f \tau{t+1}\G^i\p\t\G^k\Omega^{l}\phi^I\|_{L^2(\mathcal H_\tau)}\\
		\lesssim& \tau^{-1}\sum_{i=0}^2\dl^{(i-1)\lambda_k}\|\f \tau{t+1}\p\G^i\t\G^k\Omega^{l}\phi^I\|_{L^2(\mathcal H_\tau)}\\
		\lesssim& M_0\tau^{-1}(\dl^{a_k-\lambda_k}+\dl^{a_{k+1}}+\dl^{a_{k+2}+\lambda_k}),
		\end{split}
		\end{equation}
		where $\lambda_k$ is any nonnegative constant. Similarly, it follows from \eqref{leq} and \eqref{GGO} that
		\begin{equation}\label{OSH}
		\begin{split}
		&|\O\t\G^k\O^l\phi^I(\bar t,\bar x)|+|S\t\G^k\O^l\phi^I(\bar t,\bar x)|+\sum_{i=1}^2|H_i\t\G^k\O^l\phi^I(\bar t,\bar x)|\\
		\lesssim& M_0\tau^{-1+\eta}(\dl^{a_k-\lambda_k}+\dl^{a_{k+1}}+\dl^{a_{k+2}+\lambda_k}).
		\end{split}
		\end{equation}
		For any point $(\bar t,\bar x)\in\mathcal H_\tau$ satisfying $|\bar x|\geq\f {\bar t}4$ and $k+l\leq 2$, use \eqref{geq}, \eqref{pGO} and \eqref{GGO} to have
		\begin{align}
		&|\p\t\G^k\Omega^{l}\phi^I(\bar t,\bar x)|\lesssim M_0\tau^{-1}\dl^{(a_k+a_{k+1})/2},\label{ge}\\
		&|\O\t\G^k\O^l\phi^I(\bar t,\bar x)|+|S\t\G^k\O^l\phi^I(\bar t,\bar x)|+\sum_{i=1}^2|H_i\t\G^k\O^l\phi^I(\bar t,\bar x)|
		\lesssim M_0\tau^{-1+\eta}\dl^{(a_k+a_{k+1})/2}.\label{g}
		\end{align}
		Thus, \eqref{tZ} holds when we choose {$\lambda_k=1$} in \eqref{Y-30} and \eqref{OSH}.
	\end{proof}

	We are ready to close our bootstrap assumption \eqref{EA} in $D_\tau$, and hence obtain the following existence theorem in $B_{2\dl}$.
	
	\begin{theorem}\label{11.2}
		When $\delta>0$ is small, there exists a smooth solution $\phi$ to \eqref{semi} in $B_{2\dl}$.
	\end{theorem}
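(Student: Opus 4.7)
The plan is a standard continuity/bootstrap argument. Let $\tau^\ast$ denote the supremum of times $\tau\geq \tau_0$ on which the bootstrap bounds \eqref{EA} hold with the chosen constant $M_0$. By Theorem \ref{Th2.1} the assumption is strict at $\tau=\tau_0$, so it will suffice to improve both constants in \eqref{EA} by a factor $1/2$ on $[\tau_0,\tau^\ast]$ once $M_0$ is large and $\delta$ is small; combined with short-time existence in the hyperboloidal foliation, this forces $\tau^\ast=+\infty$ and yields the desired global solution in $B_{2\delta}$.

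To improve the assumption, I would apply $Z^k=\tilde\Gamma^{k_1}\Omega^{k_2}$ to \eqref{semi} and use the fact that all $Z\in\{\partial_\alpha,S,H_i,\Omega\}$ are (conformal) Killing for $m$, so $[\Box,Z^k]$ only produces lower-order terms of the same schematic form. The right-hand side is then a sum of quadratic null forms, and \eqref{cnull} gives
\begin{equation*}
|\Box Z^k\phi^I|\lesssim \sum_{k_a+k_b\leq k}\bigl(|TZ^{k_a}\phi|\,|\partial Z^{k_b}\phi|+|\partial Z^{k_a}\phi|\,|TZ^{k_b}\phi|\bigr).
\end{equation*}
Each factor carrying $T$ gains an extra weight of order $\tau/(1+t)$ on $\mathcal{H}_\tau$ through $T_i=\omega^i L+r^{-1}\omega^i_\perp\Omega$, which is precisely the weight already sitting in the natural and conformal energies \eqref{E(f,tau)}--\eqref{E_c(f,tau)}. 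A high-low split then controls $\|\Box Z^k\phi\|_{L^2(\mathcal{H}_\tau)}$: the higher-order factor is placed in $L^2(\mathcal{H}_\tau)$ via \eqref{EA} or via the weighted $L^2$ inequalities of Proposition \ref{P4.1}, while the lower-order one (of order at most $2$ since $k+l\leq 4$) is bounded in $L^\infty$ using \eqref{tZ}.

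With this nonlinear $L^2$ bound in hand, insert it into the natural energy estimate \eqref{E}, the conformal energy estimate \eqref{Eklc} (whose boundary contribution on $C_\delta$ is controlled by Lemma \ref{lem:Conf}), and the $L^2$-type estimate \eqref{vp}. The initial data contribution at $\tau=\tau_0$ is bounded by Theorem \ref{Th2.1}. Applying Gronwall's inequality and taking $\delta$ small then upgrades \eqref{EA} with constant $M_0/2$, provided the $\delta$-hierarchy $a_0=1-\kappa,\,a_1=\tfrac{3}{8}-\kappa,\ldots,a_4=-\tfrac{5}{2}-\kappa$ together with $\alpha_0=\tfrac{7}{32}-\kappa,\alpha_1,\alpha_2$ is such that every product $\delta^{a_{k_a}+\alpha_{k_b}}$ produced by the high-low split is strictly smaller than $\delta^{a_k}$; a direct arithmetic check on the exponents confirms this, which is exactly why the specific values in \eqref{EA} were selected.

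The main obstacle will be the top-order case $k+l=4$ for the natural energy $E_{k,l}$. Here the lower-order factor, of order at most $2$, has $L^\infty$ decay only $\tau^{-1/6}$ when the leading vector field is $S$ or $H_i$ (see \eqref{tZ}). The null condition \eqref{null} is indispensable at this point: it upgrades one raw $\partial Z\phi$ factor into a good $TZ\phi$ factor, supplying the missing $\tau/(1+t)$ weight that turns the $\tau^{-1/6}$ pointwise decay, when paired with a $\tau/(1+t)$-weighted $L^2$ factor, into a total rate that is integrable against $d\tau$ in \eqref{E} and against $\tilde\tau\,d\tilde\tau$ in \eqref{Eklc}. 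Without this null structure the remaining product would decay only like $\tau^{-1}$, which together with the extra $\sqrt{\ln\tau}$ appearing in \eqref{vp} would prevent the bootstrap from closing; balancing this loss against the $\tau^{5/3}$ conformal growth already forced by Lemma \ref{lem:Conf} is the delicate technical core of the argument.
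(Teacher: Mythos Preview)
Your overall strategy---bootstrap, high--low split, feed the nonlinear $L^2$ bound into \eqref{E} and \eqref{Eklc}, verify the $\delta$-hierarchy---is exactly what the paper does. The gap is in the null-form step. The estimate \eqref{cnull}, namely $|Q|\lesssim |T\varphi|\,|\partial\psi|+|\partial\varphi|\,|T\psi|$, is adapted to the null-cone geometry of Section~\ref{YY}; on hyperboloids it does \emph{not} yield the extra decay you need. Indeed $|T_i f|\lesssim|\bar\partial f|$, so $\|TZ^{k_a}\phi\|_{L^2(\mathcal H_\tau)}\lesssim\sqrt{E_{k_a,l}(\tau)}$ is bounded, while the remaining factor $\|\partial Z^{k_b}\phi\|_{L^\infty}$ decays only like $\tau^{-1}$ by \eqref{tZ}. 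This gives $\|\Box Z^k\phi\|_{L^2(\mathcal H_\tau)}\lesssim M_0^2\delta^{\cdots}\tau^{-1}$, and after integrating $d\tilde\tau$ in \eqref{E} (respectively $\tilde\tau\,d\tilde\tau$ in \eqref{Eklc}) you get $\ln\tau$ (respectively $\tau$) growth, which does not match the bootstrap bounds $1$ and $\tau^{5/6}$ in \eqref{EA}. Your last paragraph invokes the $\tau^{-1/6}$ pointwise decay, but that rate belongs to $S\phi,H_i\phi,\Omega\phi$ (second line of \eqref{tZ}), not to $T\phi$ or $\partial\phi$; the $T$-decomposition of \eqref{cnull} never produces a factor with that decay.

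The paper instead uses the exact identity $Q_0(\varphi,\psi)=t^{-1}\partial_t\varphi\,S\psi-t^{-1}H_i\varphi\,\partial_i\psi$ (and analogous ones for $Q_{\alpha\beta}$), obtained by substituting $\partial_i=t^{-1}(H_i-x^i\partial_t)$ in only \emph{one} factor. This gives the hyperboloidal null-form bound
\[
|\Box\tilde\Gamma^k\Omega^l\phi^I|\lesssim (1+t)^{-1}\sum_{k_1+k_2\le k}|\partial\tilde\Gamma^{k_1}\Omega^{l_1}\phi|\,|Z\tilde\Gamma^{k_2}\Omega^{l_2}\phi|,\qquad Z\in\{\partial,S,H_i,\Omega\},
\]
so that writing $(1+t)^{-1}=\tau^{-1}\cdot\tfrac{\tau}{1+t}$, placing the $\tfrac{\tau}{1+t}$-weighted $\partial$-factor in $L^2$ via Proposition~\ref{P4.1}, and putting $Z\phi$ in $L^\infty$ with its $\tau^{-1/6}$ decay from \eqref{tZ}, yields $\|\Box\tilde\Gamma^k\Omega^l\phi\|_{L^2(\mathcal H_\tau)}\lesssim M_0^2\delta^{a_{k_1}+\alpha_{k_2}}\tau^{-7/6}$. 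That $\tau^{-7/6}$ is integrable against $d\tilde\tau$ and gives exactly $\tau^{5/6}$ against $\tilde\tau\,d\tilde\tau$, which is what closes \eqref{EA}. No Gronwall is needed here---the integrals converge directly, and one then checks $a_{k_1}+\alpha_{k_2}>a_k$ using $\kappa<\tfrac1{32}$.
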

	\begin{proof}
		Act $\t\Gamma^k\O^l$ ($k+l\leq 4$) on the equation of \eqref{semi}, and then one has
		\begin{equation*}
		\begin{split}
		|\Box\t\G^k\O^l\phi^I|\lesssim(1+t)^{-1}\sum_{\tiny\begin{array}{c}k_1+k_2\leq k\\l_1+l_2\leq l\end{array}}|\p\t\G^{k_1}\O^{l_1}\phi|\cdot|Z\t\G^{k_2}\O^{l_2}\phi|,
		\end{split}
		\end{equation*}
		where $Z\in\{\p_{\al},S,H_i,\O\}$. Thus, it follows from Proposition \ref{P4.1} that
		\begin{equation}\label{key}
		\begin{split}
		\|\Box\t\G^k\O^l\phi^I\|_{L^2(\mathcal H_\tau)}\lesssim&\sum_{\tiny\begin{array}{c}k_1+k_2\leq k\\l_1+l_2\leq l\\k_1+l_1\geq k_2+l_2\end{array}}\tau^{-1}\|\f \tau{1+t}\p\tilde\G^{k_1}\O^{l_1}\phi\|_{L^2(\mathcal H_\tau)}\|Z\t\G^{k_2}\O^{l_2}\phi\|_{L^\infty(\mathcal H_\tau)}\\
		&+\sum_{\tiny\begin{array}{c}k_1+k_2\leq k\\l_1+l_2\leq l\\k_1+l_1> k_2+l_2\end{array}}\tau^{-1}\|\p\tilde\G^{k_1}\O^{l_1}\phi\|_{L^\infty(\mathcal H_\tau)}\|\f \tau{1+t}Z\t\G^{k_2}\O^{l_2}\phi\|_{L^2(\mathcal H_\tau)}\\
		\lesssim&\sum_{\tiny\begin{array}{c}k_1+k_2\leq k\\k_2\leq 2\end{array}}{M_0}^2\dl^{a_{k_1}+\al_{k_2}}{\tau^{-2+\eta}}.
		\end{split}
		\end{equation}
		Inserting \eqref{key} to \eqref{E} and \eqref{Ec}, one gets for $k+l\leq 4$,
		\begin{align}
		&\sqrt{E_{k,l}(\tau)}\lesssim\dl^{\f32-\kappa-k}+\sum_{\tiny\begin{array}{c}k_1+k_2\leq k\\k_2\leq 2\end{array}}{M_0}^2\dl^{a_{k_1}+\al_{k_2}},\label{Ekl}\\
		&\sqrt{E^c_{k,l}(\tau)}\lesssim\dl^{\f32-\kappa-k}\sqrt{\ln\tau}+\sum_{\tiny\begin{array}{c}k_1+k_2\leq k\\k_2\leq 2\end{array}}{M_0}^2\dl^{a_{k_1}+\al_{k_2}}\tau^{\eta}.\label{Eckl}
		\end{align}
		Take the value of $a_{k_1}$ and $\al_{k_2}$ in \eqref{EA} and \eqref{tZ} to \eqref{Ekl} and \eqref{Eckl}, then since $\kappa<\f12$ and $\dl>0$ is small enough, one has
		$$
		E_{k,l}(\tau)\lesssim\dl^{2a_k}\quad\text{and}\quad E^c_{k,l}(\tau)\lesssim\dl^{2a_k}\tau^{2\eta} \quad\text{for}\quad k+l\leq 4,
		$$
		which are independent of $M_0$.
	\end{proof}
	
	Finally, we prove Theorem \ref{main}.
	\begin{proof}
		Theorem \ref{Th2.1} gives the local existence of smooth solution $\phi$ to \eqref{semi} with \eqref{initial}.
		On the other hand,  the global existence of the solution in $A_{2\dl}$
		and in $B_{2\dl}$  has been established in Section \ref{YY} and Theorem \ref{11.2} respectively.
		Then it follows from the uniqueness of the smooth solution to \eqref{semi} with \eqref{initial}
		that the proof of $\phi\in C^\infty([1,+\infty)\times\mathbb R^2)$  is finished.
		In addition, $|\na\phi|\lesssim\delta^{-\kappa}t^{-1/2}$ follows from  \eqref{local1-2}, \eqref{local2-2},
		\eqref{Zkphi}, and the first inequality in \eqref{tZ} since $t\lesssim \tau^2$ in $B_{2\dl}$. Thus Theorem \ref{main}
		is proved.
	\end{proof}
	
	{\bf Acknowledgement.} Ding is supported by National Natural Science Foundation of China 12071223. Xu is supported by National Natural Science Foundation of China 11971237.

\end{document}